\newtheorem{theorem}{Theorem}[section]
\newtheorem{lemma}[theorem]{Lemma}
\newtheorem{definition}[theorem]{Definition}
\newtheorem{corollary}[theorem]{Corollary}
\newtheorem{remark}[theorem]{Remark}
\begin{document}
\title{A classification of anisotropic Besov spaces}
\author{Jahangir Cheshmavar}
\author{Hartmut F\"uhr}
\email{j$_{_-}$cheshmavar@pnu.ac.ir}
\email{fuehr@matha.rwth-aachen.de}
\address{Department of Mathematics, Payame Noor University, P.O. Box: 19395-3697, Tehran, Iran}
\address{Lehrstuhl A f\"ur Mathematik, RWTH Aachen University, D-52056 Aachen, Germany}
\maketitle

\begin{abstract}
We study (homogeneous and inhomogeneous) anisotropic Besov spaces associated to expansive dilation matrices $A \in {\rm GL}(d,\mathbb{R})$, with the goal of clarifying when two such matrices induce the same scale of Besov spaces. For this purpose, we first establish that anisotropic Besov spaces have an alternative description as decomposition spaces. This result allows to relate properties of function spaces to combinatorial properties of the underlying coverings. This principle is applied to the question of classifying dilation matrices. It turns out the scales of homogeneous and inhomogeneous Besov spaces differ in the way they depend on the dilation matrix: Two matrices $A,B$ that induce the same scale of homogeneous Besov spaces also induce the same scale of inhomogeneous spaces, but the converse of this statement is generally false. Furthermore, the question whether $A,B$ induce the same scale of homogeneous spaces is closely related to the question whether they induce the same scale of Hardy spaces; 
the latter 
question had been previously studied by Bownik. We give a complete characterization of the different types of equivalence in terms of the Jordan normal forms of $A,B$.
\end{abstract}

\noindent {\small {\bf Keywords:} anisotropic Besov spaces; decomposition spaces; quasi-norms; coarse equivalence.}

\noindent{\small {\bf AMS Subject Classification:} {\em Primary:} 46E35; 42B35. {\em Secondary:} 42C40; 42C15.}

\section{Introduction}\label{introduction}

Let $A \in \mathbb{R}^{d \times d}$ denote a matrix whose eigenvalues all have modulus $>1$. Matrices of this kind, often with additional properties (such as integer entries) were the basis of the study of discrete wavelet systems (frames or bases) obtained from dilations by powers of $A$ and suitable translations. Here, the initial choice was to take $A = 2 \cdot I_d$, but it was soon recognized that many more matrices $A$ could be employed to construct multiresolution analyses, wavelet frames and bases, see e.g. \cite{GroMa,Str,LaWa,BaMe,BoSp}.

Anisotropic Besov spaces associated to diagonal, anisotropic scaling matrices have been studied since the work of Besov, Il$'$in and Nikol$'$ski{\u\i} \cite{Besov_et_al}, see for example Schmeisser and Triebel \cite{Schmeisser_Triebel}, Triebel \cite{Triebel_FSI,Triebel_FSII,Triebel_04}, Dintelmann \cite{Dintelmann_95}, Farkas \cite{Farkas}, Hochmuth \cite{Hochmuth}, Garrig\'os, Hochmuth and Tabacco \cite{Garrigos} and Kyriazis \cite{kyr}. This class of function spaces was further extended by Bownik in 2005 \cite{Bow05}, to allow arbitrary expansive dilation matrices. Bownik showed that many of the well-known results concerning the relationship between wavelet bases and (isotropic) Besov spaces carries over to the anisotropic setting. Further work in this direction can be found in \cite{BaBe1,BaBe2,LiBaBoYaYu,CaMoRo}.

Much of the existing literature is concerned with alternative descriptions of the spaces, say in terms of moduli of continuity, atomic decompositions, etc. A recurring theme connected to these questions is a certain robustness of the various descriptions. The existing results often provide an understanding which variations of the known criteria result in the same spaces, for instance by prescribing necessary and/or sufficient conditions on the atoms that allow to characterize the function spaces. This paper is intended as a further contribution to this discussion, by studying the question which matrices induce the same scale of Besov spaces. For the related case of Hardy spaces, this question had already been studied by Bownik in \cite{Bow03}. As will be seen below, it can be shown that two expansive matrices define the same scale of homogeneous anisotropic Besov spaces if and only if they induce the same scale of anisotropic Hardy spaces. Hence the results of the cited paper are  directly relevant to our 
paper. Note however 
that one important result, namely \cite[Theorem (10.3)]{Bow03}, which provides a characterization of this property in terms of generalized eigenspaces, is incorrect; a counterexample can be found in Remark \ref{rem:counter_bownik} below. Hence our results  provide a complement and partial correction to \cite{Bow03}.

\subsection{Overview of the paper}
In terms of technique, our paper relies mostly on Bownik's papers \cite{Bow03,Bow05}, as well as on the recent work by Voigtlaender on decomposition spaces \cite{VoDiss,Vo_Embed1}. 
It is structured as follows: Sections \ref{sect:ani_besov} to \ref{sect:exp_matr} are mostly introductory. We review the basic definitions pertaining to anisotropic Besov spaces and expansive matrices. In particular, we introduce the notion of homogeneous quasi-norms associated to an expansive matrix, and recall their basic properties. We also introduce decomposition spaces, which will be an important tool in the subsequent arguments. Decomposition spaces are a flexible construction of function spaces due to Feichtinger and Gr\"obner \cite{DecompositionSpaces1}, which are based on certain coverings of the frequency space (or a subset thereof). The main advantage of decomposition spaces is that they translate the problem of comparing of decomposition spaces associated to different coverings to the task of comparing the coverings themselves, via Theorem \ref{thm:rigidity} and Lemma \ref{lem:suf_dc_equal}. 

Section \ref{sect:ani_dec} then contains the first important new result, the alternative characterization of anisotropic Besov spaces as (Fourier side) decomposition spaces. The result as such is not surprising, and has already  been obtained for several special cases: For the isotropic setting, it was proved in \cite{DecompositionSpaces1} for the inhomogeneous case, and proved also for the homogeneous case in \cite{VoDiss}. For anisotropic inhomogeneous spaces with diagonal dilation matrix, it was observed in \cite{BorupNielsenDecomposition}. The theorem for the general case seems to be new, however. As a first consequence of this observation, we show a rigidity result for Besov spaces, see Theorem \ref{thm:rigidity_besov}. For the coverings used in the decomposition space description of anisotropic Besov spaces induced by an expansive matrix $A$, one can employ annuli with respect to an $A^T$-homogeneous quasi-norms $\rho_{A^T}$; here $A^T$ denotes the transpose of $A$. We use this observation to translate 
the question whether two matrices $A$ and $B$ yield the same anisotropic Besov spaces to a question about the 
relationship between the associated quasi-norms $\rho_{A^T}$ and $\rho_{B^T}$. 
For the homogeneous case, the induced spaces are equal if and only if $\rho_A^T$ and $\rho_B^T$ are equivalent (in the usual sense); 
see Lemma \ref{lem:char_equiv_matr}. For the inhomogeneous case, equality of the induced Besov spaces holds if and only if the quasi-norms are {\em coarsely equivalent},  see Lemma \ref{lem:char_equiv_matr_ih}, and confer to Definition \ref{defn:coarse} for coarse equivalence. As a corollary we get that $A$ and $B$ induce the same inhomogeneous spaces if they induce the same homogeneous ones. Furthermore, the results from \cite{Bow03} yield that the homogeneous case is equivalent to the question whether $A$ and $B$ induce the same anisotropic Hardy spaces, by Remark \ref{rem:rel_Hardy_eq}. 

Thus the discussion whether two expansive matrices $A$ and $B$ induce the same scales of anisotropic Besov spaces is reduced to that of (coarse) equivalence of associated homogeneous quasi-norms. Section \ref{sect:char_equiv} is devoted to completely clarifying this question. We show that for each expansive matrix $A$ there exists an expansive matrix $A'$ such that $A'$ has only positive eigenvalues, and $|{\rm det}(A')|=2$, and the norms induced by the two matrices are equivalent. We call $A'$ the {\bf expansive normal form} of $A$. The significance of this notion is provided by Theorem \ref{thm:class_equiv_matr} stating that any two matrices in expansive normal form induce equivalent quasi-norms if and only if they are equal. Hence there is a natural one-to-one correspondence between the scales of anisotropic, homogeneous Besov spaces and matrices in expansive normal form. Moreover, 
Part (b) of Theorem \ref{thm:class_equiv_matr} provides an easily checked characterization of coarse equivalence. From these criteria one easily infers that there exist pairs $A,B$ of expansive matrices that induce the same scale of inhomogeneous spaces, but different scales of homogeneous spaces. 

We close the paper with the description of an algorithm to decide whether two expansive matrices $A$ and $B$ induce the same (homogeneous and/or inhomogeneous) Besov spaces.

\section{Anisotropic Besov spaces} 
\label{sect:ani_besov}

Our exposition regarding anisotropic Besov spaces  follows \cite{Bow05}. Let us start with some preliminaries and basic notions. We will use the following normalization of the Fourier transform $\mathcal{F} : {\rm L}^1(\mathbb{R}^d) \to C_0(\mathbb{R}^d)$: For all $f \in {\rm L}^1(\mathbb{R}^d)$,
\[
 \mathcal{F}(f)(\xi) = \widehat{f}(\xi) = \int_{\mathbb{R}^d} f(x) e^{- 2 \pi i \langle \xi, x \rangle} dx~.
\]
$\mathcal{S}(\mathbb{R}^d)$ denotes the space of Schwartz functions, $\mathcal{S}'(\mathbb{R}^d)$ its dual, the space of tempered distributions. As is well-known, the Fourier transform extends canonically to $\mathcal{S}'(\mathbb{R}^d)$. We let $\mathcal{P}$ denote the space of polynomials on $\mathbb{R}^d$, which can be viewed as a subspace of $\mathcal{S}(\mathbb{R}^d)$. For these definitions and basic properties of the Fourier transform, we refer to \cite{Ru_FA}. 

Given an open subset $\mathcal{O} \subset \mathbb{R}^d$, we let $\mathcal{D}(\mathcal{O}) = C_c^\infty(\mathcal{O})$, the space of smooth, compactly supported functions on $\mathcal{O}$, endowed with the usual topology \cite{Ru_FA}. We let $\mathcal{D}'(\mathcal{O})$ denote its dual space. 
We use ${\rm supp}(f) = \overline{f^{-1}(\mathbb{C} \setminus \{  0 \})}$ for the support of a function $f$. Given a Borel subset $C \subset \mathbb{R}^d$, $\lambda(C)$ denotes its Lebesgue measure. The cardinality of a set $X$ is denoted by $|X|$. 

Given a vector $x = (x_1,\ldots,x_d)^T \in \mathbb{R}^d$, we denote by $|x| = \left( \sum_{I=1}^d |x_i|^2 \right)^{1/2}$ its euclidean length. Given a matrix $A \in \mathbb{R}^d$, we let $\| A \| = \sup_{|x| = 1} |A x|$. 

The definition of anisotropic Besov spaces is based on the notion of expansive matrices. 
\begin{definition}
 \label{defn:expansive}
 A matrix $A \in {\rm GL}(d,\mathbb{R})$ is called {\bf expansive}, if all its (possibly complex) eigenvalues $\lambda$ fulfill $|\lambda|>1$. 
\end{definition}

\begin{definition}
 Let $A \in {\rm GL}(d,\mathbb{R})$ be an expansive matrix. $\psi \in \mathcal{S}(\mathbb{R}^d)$ is called {\bf $A$-wavelet} if it fulfills
 \begin{eqnarray}
  \label{eqn:def_wv1}  & & {\rm supp}(\widehat{\psi}) \subset [-1,1]^d \setminus \{ 0 \}~, \\
  \label{eqn:def_wv2}  & & \forall \xi \in \mathbb{R}^d \setminus \{0 \}~:~\sum_{j \in \mathbb{Z}} \left| \widehat{\psi}((A^T)^j \xi) \right|>0~.
 \end{eqnarray}
 Given a wavelet $\psi$, we define $\psi_j(x) = |{\rm det}(A)|^j \psi(A^jx)$, for $j \in \mathbb{Z}$.
 Given a wavelet $\psi$, a function $\psi_0 \in S(\mathbb{R})$ is called {\bf  low-pass complement to $\psi$}, if $\widehat{\psi}$ is compactly supported, with 
 \begin{equation}
  \forall  \xi \in \mathbb{R}^d ~:~ |\widehat{\psi}_0(\xi)| + \sum_{j \in \mathbb{N}_0} |\widehat{\psi}((A^T)^j\xi)| > 0~.
 \end{equation}
 The inhomogeneous wavelet system $(\psi_j^i)_{j \in \mathbb{N}_0}$ is defined by $\psi_j^i = \psi_j$, for $j \ge 1$, and $\psi_0^i = \psi_0$. 
\end{definition}

\begin{remark} \label{rem:wavelets}
 \begin{enumerate}
  \item[(a)] It is not hard to see that every expansive matrix $A$ allows the existence of $A$-wavelets. 
  \item[(b)] The fact that the Fourier transform $\widehat{\psi}$  of an $A$-wavelet has compact support away from zero actually implies 
 ${\rm supp}(\widehat{\psi}) \subset [-1,1]^d \setminus  \epsilon [-1,1]^d$ for some $\epsilon>0$. For the expansive matrix $A$, this implies that, for all $\xi \in \mathbb{R}^d$,
 \[
  |\{ j \in \mathbb{Z} : \widehat{\psi} ((A^T)^j  \xi) \not= 0 \}| \le M ~,
 \] with $M$ independent of $\xi$. As a consequence of this observation, the denominator of the right-hand side of 
 \[
  \widehat{\eta}(\xi) =  \frac{|\widehat{\psi}(\xi)|^2}{ \sum_{j \in \mathbb{Z}} \left| \widehat{\psi}((A^T)^j \xi) \right|^2}
 \] is locally finite, hence a smooth $C^\infty$-function, vanishing nowhere by Assumption (\ref{eqn:def_wv2}). 
 
 This establishes that $\widehat{\eta}$ a well-defined $C_c^\infty$-function, and $\eta \in \mathcal{S}(\mathbb{R}^d)$ is an $A$- wavelet satisfying the additional condition 
 \begin{equation}
  \label{eqn:def_wv2_strong} \forall \xi \in \mathbb{R}^d \setminus \{0 \}~:~\sum_{j \in \mathbb{Z}} \widehat{\eta}((A^T)^j \xi) = 1~.
 \end{equation}
 Similarly, one can construct $A$-wavelets $\tilde{\eta}$ satisfying the condition
 \begin{equation}
  \label{eqn:def_wv2_strong2} \forall \xi \in \mathbb{R}^d \setminus \{0 \}~:~\sum_{j \in \mathbb{Z}} \left| \widehat{\tilde{\eta}}((A^T)^j \xi) \right|^2= 1~.
 \end{equation}
 Hence we may replace assumption (\ref{eqn:def_wv2}) by (\ref{eqn:def_wv2_strong}) or by (\ref{eqn:def_wv2_strong2}), if it proves convenient.
 \item[(c)] Similarly, one can find Schwartz functions $\psi_0$ and $\psi$ such that
  \begin{equation}
  \label{eqn:def_wv2_strong_ih} \forall \xi \in \mathbb{R}^d~:~\widehat{\psi}_0(\xi) + \sum_{j \in \mathbb{N}} \widehat{\psi}((A^T)^j \xi) = 1~,
 \end{equation}
 or 
   \begin{equation}
  \label{eqn:def_wv2_strong2_ih} \forall \xi \in \mathbb{R}^d~:~|\widehat{\psi_0}(\xi)|^2 + \sum_{j \in \mathbb{N}} |\widehat{\psi}((A^T)^j \xi)|^2 = 1~,
 \end{equation}
holds. 
\item[(d)] The polynomials $p \in \mathcal{P} \subset \mathcal{S}'(\mathbb{R}^d)$ are characterized by the fact that ${\rm supp}(\widehat{p}) \subset \{ 0 \}$. Hence the convolution theorem yields for any $A$-wavelet $\psi$ that $(p \ast \psi_j)^\wedge = \widehat{p} \cdot \widehat{\psi_j} = 0$, and thus $p \ast \psi_j = 0$, for all $j \in \mathbb{Z}$.  
 \end{enumerate}
\end{remark}

\begin{definition}
 Let $A$ denote an expansive matrix, $\alpha \in \mathbb{R}$, and $0 < q \le \infty$. The sequence space $\ell^q_{v_{\alpha,A}}(\mathbb{Z})$ is the space of all sequences $(c_j)_{j \in \mathbb{Z}}$ with the property that $(|{\rm det}(A)|^{\alpha j} c_j)_{j \in \mathbb{Z}} \in \ell^q$, endowed with the obvious (quasi-)norm.  The space $\ell^q_{v_{\alpha,A}}(\mathbb{N}_0)$ is defined analogously. Since the precise meaning can usually be inferred from the context, we will typically write $\ell^q_{v_{\alpha,A}}$ for either of the two spaces.
\end{definition}

\begin{definition}
 \label{defn:an_bes}
 Let $\alpha \in \mathbb{R}$, $0 < p,q \le \infty$.  Let $A$ be an expansive matrix, and $\psi$ an $A$-wavelet, with low-pass complement $\psi_0$. 
 \begin{enumerate}
  \item[(a)]
 We define the {\bf anisotropic homogeneous Besov (quasi-) norm} by letting, for
 given $f \in \mathcal{S}'(\mathbb{R}^d)$,
 \begin{equation} \label{eqn:def_bnorm}
  \| f \|_{\dot{B}_{p,q}^\alpha(A)} = \left\| \left( \left\| f \ast \psi_j\right\| \right)_{j \in \mathbb{Z}} \right\|_{\ell^q_{v_{\alpha,A}}} 
 \end{equation} 
 We let  $\dot{B}_{p,q}^\alpha(A)$ denote the space of all tempered distributions $f$ with $\| f \|_{\dot{B}_{p,q}^\alpha(A)} < \infty$. We identify elements of $\dot{B}_{p,q}^\alpha(A)$ that only differ by a polynomial. 
 \item[(b)] The {\bf anisotropic inhomogeneous Besov (quasi-) norm} is defined for
 given $f \in \mathcal{S}'(\mathbb{R}^d)$ by 
 \begin{equation} \label{eqn:def_bnorm_ih}
  \| f \|_{{B}_{p,q}^\alpha(A)} = \left\| \left( \left\| f \ast \psi_j^i\right\| \right)_{j \in \mathbb{N}_0} \right\|_{\ell^q_{v_{\alpha,A}}} 
 \end{equation} 
 We let  $B_{p,q}^\alpha(A)$ denote the space of all tempered distributions $f$ with $\| f \|_{B_{p,q}^\alpha(A)} < \infty$. 
 \end{enumerate}
\end{definition}

\begin{remark} \label{rem:def_besov}
 A few words regarding well-definedness of the (quasi-)norm and the associated Besov space are in order. First of all, note that for any tempered distribution $f$, the convolution product $f \ast \psi_j$ is a smooth function of polynomial growth, that may or may not be $p$-integrable. By convention, the right-hand side (\ref{eqn:def_bnorm}) is infinite whenever one of the convolution products $f \ast \psi_j$ is not $p$-integrable. In case the sequence $\left( \left\| f \ast \psi_j\right\| \right)_{j \in \mathbb{Z}}$ consists only of finite real numbers, its $\ell^q_{\alpha,A}$-norm is declared infinite whenever the 
 sequence is {\em not} in $\ell^q_{\alpha,A}$.
 
 Note that strictly speaking, $\dot{B}_{p,q}^\alpha(A) \subset \mathcal{S}'(\mathbb{R}^d)/\mathcal{P}$.
 The well-definedness of the norm on the quotient space follows from the fact that $f \ast \psi_j = (f + p) \ast \psi_j$ for all $p \in \mathcal{P}$, by Remark \ref{rem:wavelets}(c). 
 
 With these conventions, the anisotropic Besov spaces and their norm are well-defined, although possibly dependent on the choice of wavelet. The independence of the norm (up to equivalence) of the choice of wavelet, and thus of the space, is shown in \cite[Corollary 3.7]{Bow05}. Furthermore, we mention the following properties of anisotropic Besov spaces: They are normed-spaces for $1 \le p,q \le \infty$, and quasi-normed otherwise. Furthermore, all spaces are {\em complete}, by \cite[Proposition 3.3]{Bow05}.
\end{remark}

\begin{remark} \label{rem:l2_besov}
Using a wavelet $\psi$  fulfilling the strong admissibility condtion (\ref{eqn:def_wv2_strong2}), one computes for $f \in \dot{B}_{2,2}^0(A)$ using a wavelet fulfilling the condition (\ref{eqn:def_wv2_strong})
\begin{eqnarray*}
 \| f \|_{\dot{B}_{2,2}^0}^2 & = & \sum_{j \in \mathbb{Z}} \| f \ast \psi_j \|_2^2 \\
 & = & \sum_{j \in \mathbb{Z}} \int_{\mathbb{R}^d} |\widehat{f}(\xi)|^2 |\widehat{\psi}(A^j \xi)|^2  d\xi \\
 & = & \int_{\mathbb{R}^d} |\widehat{f}(\xi)|^2 \underbrace{\sum_{j \in \mathbb{Z}}  |\widehat{\psi}(A^j \xi)|^2}_{\equiv 1} d\xi \\
 & = & \| f \|_2^2~.
\end{eqnarray*}
Here the second equality used the Plancherel theorem, as well as condition (\ref{eqn:def_wv2_strong}). A similar argument shows that $B_{2,2}^0(A) = {\rm L}^2(\mathbb{R}^d)$. 
\end{remark}

The chief aim of this paper is to understand the dependence of the scale of anisotropic Besov spaces on the underlying matrix. For this reason, we define an equivalence relation:
\begin{definition}
 Let $A$ and $B$ denote expansive matrices. We write $A \sim_{\dot{B}} B$ whenever $\dot{B}_{p,q}^\alpha (A) =  \dot{B}_{p,q}^\alpha (B)$ holds for all $0 < p,q \le \infty, \alpha \in \mathbb{R}$.
 The relation $A \sim_B B$ is defined analogously. 
\end{definition}
%


\section{Decomposition spaces}
\label{sect:dec_spaces}

Decomposition spaces were introduced by Feichtinger and Gr\"obner \cite{DecompositionSpaces1,DecompositionSpaces2}, initially with the aim of constructing intermediate spaces between (isotropic) Besov spaces and modulation spaces. The decomposition space formalism is an extremely flexible tool for the description of elements of large variety of function spaces in terms of their Fourier localization,  including $\alpha$-modulation spaces, Besov spaces, curvelet smoothness spaces, wavelet coorbit spaces over general dilation groups, etc.; see \cite{DecompositionSpaces1,DecompositionSpaces2,BorupNielsenDecomposition,BorupNielsenAlphaModulationSpaces,FuVo,VoDiss}. In order to treat homogeneous Besov spaces along with the inhomogeneous ones, the initial definition had to be somewhat modified, to allow for decompositions that do not cover the full frequency space \cite{FuVo}. 

As will become clear below, our paper further contributes to this unifying view onto function spaces through the lense of decomposition space theory. Let us now start by recounting the definition of Fourier-side decomposition spaces, in the form defined in \cite{VoDiss}.
These spaces depend on certain coverings of a suitably chosen set $\mathcal{O} \subset \mathbb{R}^d$ of frequencies, and partitions of unity subordinate to these. For the purposes of this paper, it is sufficient to treat a particularly amenable class of coverings, described in the next definition. 

\begin{definition}
 Let $\mathcal{O} \subset \mathbb{R}^d$ be open, and let $\mathcal{Q} = (Q_i)_{i \in I}$ denote a family of subsets $Q_i \subset \mathcal{O}$ with compact closure in $\mathcal{O}$.
 \begin{enumerate}
  \item[(a)] We call $\mathcal{Q}$ an {\bf admissible covering} of $\mathcal{O}$, if it fulfills the following conditions: 
 \begin{enumerate}
   \item[(i)] {\bf Covering property:} $\mathcal{O} = \bigcup_{i \in I} Q_i$
  \item[(ii)] {\bf Admissibility:} $\sup_{i \in I} \sup_{j \in I, Q_i \cap Q_j \not= \emptyset} \frac{\lambda(Q_i)}{\lambda(Q_j)} <\infty$.
 \end{enumerate}
 \item[(b)] $\mathcal{Q}$ is called an {\bf almost structured admissible covering} if it is an admissible covering, and there exists a family $(Q_i')_{i \in I}$ of open bounded sets as well as $T_i \in {\rm GL}(d,\mathbb{R})$ and $b_i \in \mathbb{R}^d$ fulfilling the following conditions:
 \begin{enumerate}
  \item[(i)] For all $i \in I$: $\overline{T_i Q_i' + b_i} \subset Q_i$.
  \item[(ii)] The quantity $\sup_{i,j: Q_i \cap Q_j \not= \emptyset} \| T_i^{-1} T_j \|$  is finite.
  \item[(iii)] The set $\{ Q_i': i \in I \}$ is finite. 
  \item[(iv)] The family $(T_i Q_i' + b_i)_{ i \in I}$ is an admissible covering. 
 \end{enumerate}
 The tuple $((T_i)_{i \in I}, (b_i)_{i \in I}, (Q_i')_{i \in I})$ are called {\bf standardization} of $\mathcal{Q}$. 
\end{enumerate}
\end{definition}

The subtleties connected to the various notions of coverings is the price one has to pay for the generality of the decomposition space approach.

The definition of associated function spaces uses a particular class of partitions of unity subordinate to an admissible covering. 
\begin{definition}
 Let $\mathcal{Q} = (Q_i)_{i \in I}$ denote an almost structured admissible covering with standardization  $((T_i)_{i \in I}, (b_i)_{i \in I}, (Q_i')_{i \in I})$, and let $0 < p \le \infty$. We call a family $(\varphi_i)_{i \in I}$ of functions an {\bf ${\rm L}^p$-BAPU} with respect to $\mathcal{Q}$ if it has the following properties:
 \begin{enumerate}
 \item[(i)] For all $i \in I$~:~$\varphi_i \in C_c^\infty( \mathcal{O} )$.
 \item[(ii)] For all $i \in I$~:~$\varphi_i \equiv 0$ on $\mathbb{R}^d \setminus Q_i$.
 \item[(iii)] $\sum_{i \in I} \varphi \equiv 1$ on $\mathcal{O}$. 
 \item[(iv)] $\sup_{i \in I} |{\rm det}(T_i)|^{\frac{1}{t}-1} \| \mathcal{F}^{-1} \varphi_i \|_{{\rm L}^p} <\infty$.
 Here $t = \min(p,1)$. 
 \end{enumerate}
\end{definition}
The word BAPU in the definition is an acronym for {\em bounded amissible partition of unity}. 
The following important remark ensures that BAPUs exist for almost structured admissible coverings \cite[Theorem 2.8]{VoDiss}.
\begin{lemma}
 Let $\mathcal{Q} = (Q_i)_{i \in I}$ denote an almost structured admissible covering. Then there exists a family $(\varphi_i)_{i \in I}$ that is an ${\rm L}^p$-BAPU, for every $0 < p \le \infty$. 
\end{lemma}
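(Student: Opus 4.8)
The plan is to construct the BAPU by transporting a \emph{finite} family of bump functions --- one for each model set appearing in a standardization --- along the affine maps $\xi\mapsto T_i^{-1}(\xi-b_i)$, and then normalizing. Fix a standardization $((T_i)_{i\in I},(b_i)_{i\in I},(Q_i')_{i\in I})$ of $\mathcal Q$. Since $\{Q_i':i\in I\}$ is finite, for each occurring model set $Q'$ I would pick an open set $Q''$ with $\overline{Q'}\subset Q''$ and $\gamma_{Q'}\in C_c^\infty(Q'')$ with $\gamma_{Q'}\equiv 1$ on $Q'$, arranged so that $\overline{T_iQ_i''+b_i}\subset Q_i$ for all $i\in I$; carrying this out \emph{uniformly} in $i$ is the first point at which the axioms of an almost structured covering are used. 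Put $\gamma_i(\xi):=\gamma_{Q_i'}\bigl(T_i^{-1}(\xi-b_i)\bigr)$, so $\gamma_i\in C_c^\infty(Q_i)$ and $\gamma_i\equiv 1$ on $T_iQ_i'+b_i$, and let $S:=\sum_{i\in I}\gamma_i$. Once $S$ is known to be a positive element of $C^\infty(\mathcal O)$, set $\varphi_i:=\gamma_i/S$ (extended by $0$). Properties (i)--(iii) of an ${\rm L}^p$-BAPU are then immediate: $\varphi_i\in C_c^\infty(\mathcal O)$, $\varphi_i$ vanishes off $Q_i$, and $\sum_{i\in I}\varphi_i\equiv 1$ on $\mathcal O$.

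That $S$ is well-defined, smooth and bounded below rests on the fact that $\mathcal Q$ has \emph{bounded overlap}: there is $N\in\NN$ with $|\{\,j\in I:Q_i\cap Q_j\neq\emptyset\,\}|\le N$ for every $i$, and hence every $\xi\in\mathcal O$ lies in at most $N$ of the sets $Q_i$. I would deduce this from admissibility together with the affine data of the standardization --- by comparing the finitely many images $T_iQ_i'+b_i$ via conditions (ii) and (iv) --- using a standard geometric argument. Then the sum defining $S$ is locally finite, so $S\in C^\infty(\mathcal O)$; and because $(T_iQ_i'+b_i)_{i\in I}$ covers $\mathcal O$ and $\gamma_i\equiv 1$ there, $S\ge 1$ on $\mathcal O$. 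In particular $1/S\in C^\infty(\mathcal O)$ and each $\varphi_i$ is a genuine smooth function.

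For the quantitative condition (iv) the idea is to pass to ``$T_i$-adapted coordinates.'' Let $\Phi_i(\eta):=\varphi_i(T_i\eta+b_i)$. A change of variables gives $\mathcal F^{-1}\varphi_i(x)=|\det T_i|\,e^{2\pi i\langle b_i,x\rangle}\,(\mathcal F^{-1}\Phi_i)(T_i^Tx)$, whence $\|\mathcal F^{-1}\varphi_i\|_{{\rm L}^t}=|\det T_i|^{\,1-1/t}\,\|\mathcal F^{-1}\Phi_i\|_{{\rm L}^t}$, so that the normalizing factor $|\det T_i|^{1/t-1}$ in (iv) cancels the determinant power exactly. It therefore suffices to show $\sup_{i\in I}\|\mathcal F^{-1}\Phi_i\|_{{\rm L}^t}<\infty$, and for this it is enough that $(\Phi_i)_{i\in I}$ lie in a bounded subset of $C_c^\infty(\RR^d)$ --- supported in a common compact set, with all derivatives uniformly bounded --- since then the inverse Fourier transforms of the $\Phi_i$ are uniformly rapidly decreasing, hence uniformly bounded in every ${\rm L}^r$. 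Now $\Phi_i(\eta)=\gamma_{Q_i'}(\eta)\big/\sum_{j}\gamma_{Q_j'}\bigl((T_j^{-1}T_i)\eta+T_j^{-1}(b_i-b_j)\bigr)$, the sum running over the at most $N$ indices $j$ with $Q_i\cap Q_j\neq\emptyset$. The numerator ranges over a finite set; in the denominator, the linear parts $T_j^{-1}T_i$ and their inverses $T_i^{-1}T_j$ have uniformly bounded norm by condition (ii), the shifts $T_j^{-1}(b_i-b_j)$ are uniformly bounded because overlapping cells carry comparable affine data, and the $\gamma_{Q'}$ come from a finite list; moreover the denominator is $\ge 1$ on a neighborhood of ${\rm supp}\,\gamma_{Q_i'}$. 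Hence the denominator ranges over a relatively compact family of smooth functions, and $(\Phi_i)_{i\in I}$ is bounded in $C_c^\infty(\RR^d)$ as required. This yields (iv) and completes the proof.

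The main obstacle is exactly the two uniformity assertions flagged above: that the model sets can be thickened to $Q_i''$ with $\overline{T_iQ_i''+b_i}\subset Q_i$ \emph{uniformly} in $i$, and that cells with nonempty intersection carry comparable affine parameters --- both bounded transition norms \emph{and} bounded relative shifts --- so that the rescaled partition pieces $\Phi_i$ remain in a fixed compact set. These are precisely the features the definition of an almost structured admissible covering (finitely many model sets, $\sup_{i,j:Q_i\cap Q_j\neq\emptyset}\|T_i^{-1}T_j\|<\infty$, and admissibility of $(T_iQ_i'+b_i)_{i\in I}$) is designed to provide; granted them, the remainder is routine bookkeeping with the Fourier transform applied to the fixed finite family of bumps.
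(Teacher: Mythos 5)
The paper does not give a proof of this lemma; it is taken from \cite[Theorem 2.8]{VoDiss}, and your construction is essentially the one used there and throughout the decomposition-space literature: transport a finite family of bumps along the affine maps of the standardization, normalize, and control the rescaled pieces $\Phi_i$ uniformly in $C_c^\infty$. The change-of-variables identity, the cancellation of the determinant power, and the reduction of (iv) to boundedness of $(\Phi_i)_i$ in $C_c^\infty$ are all correct (though note you silently replaced the paper's ``$\|\mathcal F^{-1}\varphi_i\|_{{\rm L}^p}$'' in condition (iv) by ``$\|\mathcal F^{-1}\varphi_i\|_{{\rm L}^t}$'' --- the latter is what the cancellation actually establishes and is what the cited source uses; for $p>1$ the paper's stated condition as written would not be scale-invariant).

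The genuine gap is exactly the first uniformity assertion you flag: finding, for each occurring model set $Q'$, a single thickening $Q''$ with $\overline{Q'}\subset Q''$ such that $\overline{T_iQ_i''+b_i}\subset Q_i$ for \emph{all} $i$. This does not follow from the paper's stated axioms. Condition (i) only says $\overline{T_iQ_i'+b_i}\subset Q_i$ for each $i$ separately, so the gap between $\overline{Q_i'}$ and $T_i^{-1}(Q_i-b_i)$ may shrink to zero along $I$ --- it is easy to build one-dimensional examples of coverings satisfying all of (i)--(iv) where this happens --- and then the derivatives of any $\gamma_i$ with ${\rm supp}\,\gamma_i\subset Q_i$ and $\gamma_i\equiv1$ on $T_iQ_i'+b_i$ blow up after rescaling. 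The underlying definition in Voigtlaender's work is stronger: there each $Q_i$ is itself an affine image $T_iQ_i''+b_i$ of a second finite family of model sets with $\overline{Q_i'}\subset Q_i''$, and that is precisely what furnishes the uniform thickening. So either the paper's Definition is meant to be read with this extra clause, or the lemma as stated requires it; your proof is correct once this is added. In contrast, your second flagged concern --- uniform boundedness of the relative shifts $T_j^{-1}(b_i-b_j)$ --- is not actually needed: the uniform compact support of $\Phi_i$ comes from the fixed numerator $\gamma_{Q_i'}$ alone, the denominator is $\ge 1$ on that support because it agrees there with $S\circ(T_i\cdot+b_i)$, and the chain-rule bound on its derivatives uses only $\|T_j^{-1}T_i\|$ and the finitely many bump functions; translations leave sup-norms of derivatives unchanged, so the shifts never enter.
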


\begin{definition}
 Let $\mathcal{Q} = (Q_i)_{i \in I}$ denote an admissible covering, and let $v: I \to \mathbb{R}^+$ denote a weight. The weight is called {\bf $\mathcal{Q}$-moderate} if 
 \[
  \sup_{i,j \in I : Q_i \cap Q_j \not= \emptyset} \frac{v(i)}{v(j)} < \infty~. 
 \]
Given $0 < q \le \infty$, we define $\ell^q_v(I) = \{ c = (c_i)_{i \in I} \in \mathbb{C}^I~:~ (c_i v(i))_{i \in I} \in \ell^q(I) \}$, endowed with the obvious (quasi-)norm. 
\end{definition}

We can now define the class of decomposition spaces that we are interested in:
\begin{definition}
 Let $\mathcal{Q} = (Q_i)_{i \in I}$ denote an almost structured admissible covering, let $v$ denote a $\mathcal{Q}$-moderate weight on $I$, and $0 \le p,q \le \infty$. Let $(\varphi_i)_{i \in I}$ denote an ${\rm L}^p$-BAPU associated to $\mathcal{Q}$. Given $u \in \mathcal{D}'(\mathcal{O})$, we define its {\bf decomposition space (quasi-)norm} as 
 \begin{equation}
  \label{eqn:def_dsnorm} \left\| u \right\|_{\mathcal{D}(\mathcal{Q},{\rm L}^p,\ell^q_v)} = \left\| \left( \| \mathcal{F}^{-1} (\varphi_i \cdot u ) \|_{{\rm L}^p} \right)_{ i \in I} \right\|_{\ell^q_v}~.
 \end{equation}
 We denote by $\mathcal{D}(\mathcal{Q},,{\rm L}^p,\ell^q_v)$ the space of all $u \in \mathcal{D}'(\mathcal{O})$ for which this (quasi-)norm is finite.
\end{definition}

\begin{remark}
\begin{enumerate}
 \item[(a)]
 We use the same conventions regarding finiteness of the (quasi-) norm as for the Besov space setting, see Remark \ref{rem:def_besov}.  For well-definedness of the inverse Fourier transform  $\mathcal{F}^{-1} (\varphi_i \cdot u ) $ observe that the pointwise product $\varphi_i \cdot u$ is a distribution on $\mathbb{R}^d$ with compact support, hence is a {\em tempered} distribution, whose inverse Fourier transform is a smooth function. Hence the meaning of the ${\rm L}^p$-norm of the inverse Fourier transform is clear, if one allows $\infty$ as a possible value. 

Just as for Besov spaces, the (quasi-)norms of decomposition spaces are independent of the choice of BAPU (up to equivalence), and complete with respect to their (quasi-)norms. Also, an application of the Plancherel theorem, similar to that in the Besov space case, easily establishes that $\mathcal{D}(\mathcal{Q},L^2,\ell^2) = {\rm L}^2(\mathbb{R}^d)$, whenever $\mathbb{R}^d \setminus \bigcup_{i \in I}  Q_i$ has measure zero. 
\item[(b)] It is common in the decomposition space literature to use the space $\mathcal{S}'(\mathbb{R}^d)$ as a reservoir space for $\mathcal{D}(\mathcal{Q},{\rm L}^p,\ell_v^q)$, rather than the larger space $\mathcal{D}'(\mathcal{O})$. This may result in {\em incomplete} decomposition spaces, see \cite[Remark after Definition 21]{FuVo}, and it is the main reason why our definition relies on $\mathcal{D}'(\mathcal{O})$. This distinction will become relevant in the proof of Theorem \ref{thm:besov_as_decsp} below. 
\end{enumerate}

 \end{remark}

The definition of decomposition spaces puts the frequency covering at the center of attention: Important properties of decomposition spaces should be related to properties of the underlying covering, and a major task of decomposition space theory is to make this relation explicit and transparent. The thesis \cite{VoDiss} demonstrates that this programme can be carried out for {\em embedding theorems} describing inclusion relations between decomposition spaces, see also \cite{Vo_Embed1}. In the following, we will be concerned with a much more restrictive question, namely that of {\em equality} of decomposition spaces: We would like to understand when different coverings induce the same decomposition spaces. For this purpose, we must be able to compare different admissible coverings. The pertinent notions for such a comparison are contained in the following definitions.

\begin{definition}
 Let $\mathcal{Q} = (Q_i)_{i \in I}$ and $\mathcal{P} = (P_j)_{j \in J}$ denote admissible coverings of the open set $\mathcal{O}$. 
 \begin{enumerate}
  \item[(a)] Given $i \in I$ and $n \in \mathbb{N}_0$, we inductively define index sets $i^{n*} \subset I$ via
  \[
   i^{0*} = \{ i \}~,~i^{(n+1)*} = \{ j \in I~:~\exists k \in i^{n*} \mbox{ with } Q_k \cap Q_j \not= \emptyset \}~. 
  \]
  \item[(b)] We define 
  \[
   Q_{i}^{n*} = \bigcup_{j \in i^{n*}} Q_j~.
  \]
  \item[(c)] We call $\mathcal{Q}$ {\bf  almost subordinate to } $\mathcal{P}$ if there exists $k \in \mathbb{N}_0$ such that for every $i \in I$ there exists a $j_i \in J$ with 
  $Q_i \subset P_{j_i}^{k*}$. 
  \item[(d)] $\mathcal{Q}$ and $\mathcal{P}$ are called {\bf equivalent} if $\mathcal{Q}$ is almost subordinate to $\mathcal{P}$ and $\mathcal{P}$ is almost subordinate to $\mathcal{Q}$. 
 \end{enumerate}
\end{definition}

\begin{definition}
 Let $\mathcal{Q}=(Q_i)_{i \in I}$ and $\mathcal{P}= (P_j)_{j \in J}$ denote admissible coverings of the open sets $\mathcal{O}$ and $\mathcal{O}'$, respectively. 
 \begin{enumerate}
  \item[(a)] Given $i \in I$, we define $J_i \subset J$ as
  \[
   J_i = \{ j \in J: P_j \cap Q_i \not= \emptyset \}~,
  \]
  and similarly, for $j \in J$, 
  \[
   I_j =   \{ i \in I: P_j \cap Q_i \not= \emptyset \}~.
  \]
 \item[(b)] $\mathcal{Q}$ and $\mathcal{P}$ are called {\bf weakly equivalent} if
 \[
  \sup_{i \in I} | J_i | + \sup_{j \in J} |I_j|  < \infty 
 \]
 \end{enumerate}
\end{definition}
%
%

It is useful to also have a notion of equivalence for weights over different coverings. 
\begin{definition} \label{defn:weight_equiv}
 Let $\mathcal{Q} = (Q_i)_{i \in I}$ and $\mathcal{P} = (P_j)_{j \in J}$ denote admissible coverings,  $v_1$ a $\mathcal{Q}$-moderate weight on $I$, and $v_2$ a $\mathcal{P}$-moderate weight on $J$. We define
 \[
  v_1 \asymp v_2 :\Leftrightarrow   \sup_{i \in I, j \in J~:Q_i \cap P_j \not= \emptyset} \frac{v_1(i)}{v_2(j)} +  \frac{v_2(j)}{v_1(i)} < \infty
 \]
\end{definition}

We now cite two results relating (weak) equivalence of coverings to equality of decomposition spaces. The first one can be understood as a rigidity theorem, 
 see \cite[Theorem 1.10]{Vo_Embed1}.
\begin{theorem} \label{thm:rigidity}
 Let $\mathcal{Q}=(Q_i)_{i \in I}$ and $\mathcal{P}= (P_j)_{j \in J}$ denote almost structured admissible coverings of the same open set $\mathcal{O}$, 
  $v_1$ a $\mathcal{Q}$-moderate weight on $I$, and $v_2$ a $\mathcal{P}$-moderate weight on $J$. Assume that  $(p_1,q_1,p_2,q_2) \in (0,\infty]^4 \setminus \{ (2,2,2,2) \}$ exist with 
 \[
   \mathcal{D}(\mathcal{Q},{\rm L}^{p_1}, \ell^{q_1}_{v_1}) = \mathcal{D}(\mathcal{P},{\rm L}^{p_2}, \ell^{q_2}_{v_2})~.
 \]
 Then $(p_1,q_1) = (p_2,q_2)$, $v_1 \asymp v_2$, and $\mathcal{P}$ and $\mathcal{Q}$ are weakly equivalent. 
\end{theorem}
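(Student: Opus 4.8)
The first step is to convert the set-theoretic equality of the two spaces into a quantitative statement. Both decomposition spaces are complete metrizable topological vector spaces ($F$-spaces) that are continuously included in $\mathcal{D}'(\mathcal{O})$, so the identity map between them has closed graph, and the closed graph theorem for $F$-spaces produces a constant $C>0$ with
\[
 C^{-1}\,\|u\|_{\mathcal{D}(\mathcal{Q},{\rm L}^{p_1},\ell^{q_1}_{v_1})} \le \|u\|_{\mathcal{D}(\mathcal{P},{\rm L}^{p_2},\ell^{q_2}_{v_2})} \le C\,\|u\|_{\mathcal{D}(\mathcal{Q},{\rm L}^{p_1},\ell^{q_1}_{v_1})}
\]
for every $u \in \mathcal{D}'(\mathcal{O})$. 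All three conclusions are then read off by feeding well-chosen test distributions into this inequality. The basic test objects are \emph{normalized bumps}: for each of the finitely many profile sets $Q_i'$ fix $g\in C_c^\infty(Q_i')$ equal to $1$ on a slightly smaller set and set $u_i(\xi)=g(T_i^{-1}(\xi-b_i))$; then ${\rm supp}(u_i)\subset\overline{T_iQ_i'+b_i}\subset Q_i$, and the affine change of variables formula for $\mathcal{F}^{-1}$ gives $\|\mathcal{F}^{-1}u_i\|_{{\rm L}^p}=|{\rm det}(T_i)|^{1-1/p}\|\mathcal{F}^{-1}g\|_{{\rm L}^p}$. Symmetric bumps $u_j'$ are built from the standardization of $\mathcal{P}$.

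\textbf{Exponents and weights.} When $\|u_i\|_{\mathcal{D}(\mathcal{Q},{\rm L}^{p_1},\ell^{q_1}_{v_1})}$ is evaluated, only the $\varphi_k$ with $Q_k\cap Q_i\neq\emptyset$ matter; by admissibility there are boundedly many, their volumes are comparable to $\lambda(Q_i)$, and $\|T_k^{-1}T_i\|\lesssim 1$, so standard BAPU and convolution estimates yield $\|u_i\|_{\mathcal{D}(\mathcal{Q},{\rm L}^{p_1},\ell^{q_1}_{v_1})}\asymp v_1(i)\,|{\rm det}(T_i)|^{1-1/p_1}$ with constants uniform in $i$ (similarly for $u_j'$ against $\mathcal{P}$). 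Comparing $u_i$ (resp. $u_j'$) in both norms via the displayed two-sided estimate yields, for indices with $Q_i\cap P_j\neq\emptyset$, a relation $v_1(i)\,|{\rm det}(T_i)|^{1-1/p_1}\asymp v_2(j)\,|{\rm det}(T_j')|^{1-1/p_2}$. Letting the volume scale vary across the covering separates the two exponents and forces $p_1=p_2$ (the degenerate single-scale case being treated by hand), after which the surviving relation is precisely $v_1\asymp v_2$ in the sense of Definition \ref{defn:weight_equiv}, and, as a byproduct, $|{\rm det}(T_i)|\asymp|{\rm det}(T_j')|$ whenever $Q_i\cap P_j\neq\emptyset$. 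To identify the summation exponent I would test against lacunary superpositions $u=\sum_{i\in S}c_iu_i$ with the $Q_i$, $i\in S$, pairwise non-adjacent: both decomposition norms then reduce, up to uniform constants, to weighted $\ell^{q_1}(S)$- and $\ell^{q_2}(S)$-norms of the \emph{same} coefficient sequence, which forces $q_1=q_2$.

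\textbf{Weak equivalence --- the main obstacle.} What remains is $\sup_{i\in I}|J_i|+\sup_{j\in J}|I_j|<\infty$, and this is the hard part; it is also where the full strength of ``almost structured'' (rather than mere admissibility) is used. Fix $j\in J$ and aim for $|I_j|\lesssim 1$. The plan is to construct a single test distribution $u$ supported in $P_j$ whose $\mathcal{P}$-norm is comparable to the single term $v_2(j)\,|{\rm det}(T_j')|^{1-1/p_2}$, but for which \emph{every} index $i\in I_j$ contributes $\|\mathcal{F}^{-1}(\varphi_iu)\|_{{\rm L}^{p_1}}\gtrsim|{\rm det}(T_i)|^{1-1/p_1}$ to the $\mathcal{Q}$-norm; combined with $v_1(i)\asymp v_2(j)$ and the volume comparability from the previous step, the two-sided estimate then bounds $|I_j|$ purely in terms of $C$. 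The difficulty is exactly the uniform lower bound on the individual $\varphi_i$-contributions: an arbitrary bump on $P_j$ is generally ``invisible'' to most of the $\varphi_i$ meeting it, so one must exploit the structural information on almost structured coverings and their BAPUs from \cite{VoDiss} --- that on $Q_i$ a fixed finite sum of neighbouring partition functions stays bounded below, and that the geometry of the $Q_i$ clustering around $P_j$ is uniformly controlled --- to engineer a $u$ that is simultaneously non-negligible in each relevant channel. Running the symmetric construction controls $\sup_i|J_i|$. Finally, excluding $(2,2,2,2)$ is unavoidable: by the Plancherel computation recorded after the definition of decomposition spaces, $\mathcal{D}(\mathcal{Q},{\rm L}^2,\ell^2)$ equals an ${\rm L}^2$-space independent of the covering, so for $(p_1,q_1)=(p_2,q_2)=(2,2)$ the hypothesis carries no information and each of the three conclusions can fail.
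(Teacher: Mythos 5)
The paper itself offers no proof of this statement: Theorem \ref{thm:rigidity} is cited verbatim as \cite[Theorem 1.10]{Vo_Embed1} (Voigtlaender's embeddings paper), so there is no ``paper's own proof'' to check your argument against. I will therefore assess your outline on its own merits.

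Your high-level scheme --- closed graph theorem to upgrade set equality to a two-sided norm estimate, then test distributions adapted to the standardizations --- is the right kind of argument and matches the general strategy one finds in the decomposition-space literature. However, two steps are not actually proved and, as written, the second one would not close.

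First, the deduction ``comparing $u_i$ in both norms yields $v_1(i)\,|\det(T_i)|^{1-1/p_1}\asymp v_2(j)\,|\det(T_j')|^{1-1/p_2}$ for all $Q_i\cap P_j\neq\emptyset$'' does not follow from a single bump. Plugging $u_i$ into the $\mathcal{P}$-side yields a full $\ell^{q_2}_{v_2}$-sum over $J_i$ of the quantities $\|\mathcal{F}^{-1}(\psi_j u_i)\|_{p_2}$, and an individual term can be tiny when $Q_i\cap P_j$ is a small corner of $Q_i$. To isolate a pairwise relation you need to play $u_i$ and the symmetric bump $u_j'$ off against each other (and, in particular, to know a priori that $|J_i|$ is finite-but-uncontrolled, which is where the hypothesis is used at all), and this is exactly the careful combinatorial bookkeeping your sketch does not carry out. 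Likewise, ``letting the volume scale vary separates the exponents and forces $p_1=p_2$'' tacitly assumes the coverings contain blocks at infinitely many genuinely different scales; if they do not, that separation argument degenerates and one must argue differently.

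Second, and more seriously, the weak-equivalence step is the theorem, and you acknowledge yourself that you do not have the construction: ``one must exploit the structural information... to engineer a $u$ that is simultaneously non-negligible in each relevant channel.'' As you correctly observe, a bump on $P_j$ can be nearly invisible to most of the $\varphi_i$ meeting it, so the uniform lower bound $\|\mathcal{F}^{-1}(\varphi_i u)\|_{p_1}\gtrsim|\det(T_i)|^{1-1/p_1}$ for all $i\in I_j$ is not something that follows from generalities about BAPUs; it is precisely the technical core of the proof and has to be established. An outline that names the difficulty but leaves it unresolved cannot be counted as a proof of this statement. (A minor additional point: the exclusion of $(2,2,2,2)$ is not simply ``the hypothesis carries no information'' --- the Plancherel identity gives $\mathcal{D}(\mathcal{Q},L^2,\ell^2_v)=L^2$ only with trivial weight; what matters is that for $(2,2,2,2)$ the resulting space can coincide for wildly different coverings, so the conclusion about weak equivalence genuinely fails.)
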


The converse requires somewhat different conditions. The following generalizes results from \cite{DecompositionSpaces1,BorupNielsenDecomposition}. 
\begin{lemma} \label{lem:suf_dc_equal}
  Let $\mathcal{Q}=(Q_i)_{i \in I}$ and $\mathcal{P}= (P_j)_{j \in J}$ denote weakly equivalent almost structured admissible coverings of the same open set $\mathcal{O}$, with standardizations $((T_i)_{i \in I}, (b_i)_{i \in I}, (Q_i')_{i \in I})$ and $((S_j)_{j \in J}, (c_j)_{j \in J},(P_j')_{j \in J})$, respectively.
  Let $v_1$ denote a $\mathcal{Q}$-moderate weight on $I$, and $v_2$ a $\mathcal{P}$-moderate weight on $J$, with $v_1\asymp v_2$.
  Assume finally that $\mathcal{Q}$ is almost subordinate to $\mathcal{P}$, and that there exists a $C>0$ such that 
  \[ \forall (i,j) \in I \times J:  \left(Q_i \cap P_j \not= \emptyset \Rightarrow \| T_i^{-1} S_j \| + \| S_j^{-1} T_i \| \le C \right) \]
 Then 
\[
 \mathcal{D}(\mathcal{Q},{\rm L}^{p}, \ell^{q}_{v_1}) = \mathcal{D}(\mathcal{Q},{\rm L}^{p}, \ell^{q}_{v_2})
\] holds for all $0 < p,q \le \infty$.
\end{lemma}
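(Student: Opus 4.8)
The plan is to prove the asserted equality of decomposition spaces by showing that the two (quasi-)norms in question, $\|\cdot\|_{\mathcal{D}(\mathcal{Q},{\rm L}^p,\ell^q_{v_1})}$ and $\|\cdot\|_{\mathcal{D}(\mathcal{P},{\rm L}^p,\ell^q_{v_2})}$ (the latter built on the covering $\mathcal{P}$ carrying the weight $v_2$), are equivalent on the common reservoir $\mathcal{D}'(\mathcal{O})$, with the convention that both sides may equal $+\infty$ simultaneously; coincidence of the two spaces of distributions is then immediate. Since the decomposition (quasi-)norm is independent of the choice of BAPU up to equivalence, I would first fix convenient \emph{structured} ${\rm L}^p$-BAPUs $(\varphi_i)_{i\in I}$ for $\mathcal{Q}$, with scales $(T_i)$, and $(\psi_j)_{j\in J}$ for $\mathcal{P}$, with scales $(S_j)$ — obtained from finitely many fixed bump functions by affine substitutions — so that one has the uniform control $\sup_i|\det T_i|^{1/t-1}\|\mathcal{F}^{-1}\varphi_i\|_{{\rm L}^r}<\infty$ and its analogue for $(\psi_j)$, for the relevant exponents $r$; in particular, when $p\ge 1$ the ${\rm L}^1$-norms of $\mathcal{F}^{-1}\varphi_i$ and $\mathcal{F}^{-1}\psi_j$ are uniformly bounded. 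It then suffices to prove the single inequality $\|u\|_{\mathcal{D}(\mathcal{Q},{\rm L}^p,\ell^q_{v_1})}\lesssim \|u\|_{\mathcal{D}(\mathcal{P},{\rm L}^p,\ell^q_{v_2})}$, the reverse inequality being obtained by the analogous computation built on the expansion $\psi_j u=\sum_{i\in I_j}\psi_j\varphi_i u$ detailed below.

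The technical core is a pointwise estimate over index pairs: there is a constant $C'$, independent of $i,j,u$, such that
\[
 Q_i\cap P_j\neq\emptyset \quad\Longrightarrow\quad \|\mathcal{F}^{-1}(\varphi_i\,\psi_j\, u)\|_{{\rm L}^p}\le C'\,\|\mathcal{F}^{-1}(\psi_j\, u)\|_{{\rm L}^p}.
\]
For $p\ge 1$ this is Young's inequality applied to $\mathcal{F}^{-1}(\varphi_i\psi_j u)=(\mathcal{F}^{-1}\varphi_i)\ast\mathcal{F}^{-1}(\psi_j u)$, together with the uniform bound on $\|\mathcal{F}^{-1}\varphi_i\|_{{\rm L}^1}$. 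For $0<p<1$, where Young fails, I would invoke the convolution relations for band-limited ${\rm L}^p$-functions from \cite{VoDiss}: the function $\mathcal{F}^{-1}(\psi_j u)$ has Fourier support in $P_j$ and $\mathcal{F}^{-1}\varphi_i$ has Fourier support in $Q_i$, and the geometric hypothesis $\|T_i^{-1}S_j\|+\|S_j^{-1}T_i\|\le C$ on overlapping pairs forces these two sets to sit at comparable anisotropic scales — in particular $|\det T_i|\asymp|\det S_j|$ — so that the relevant ${\rm L}^p$-convolution estimate applies with a constant controlled by $|\det T_i|^{1/p-1}\|\mathcal{F}^{-1}\varphi_i\|_{{\rm L}^p}$, which is uniformly bounded by the BAPU condition. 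The hypothesis that $\mathcal{Q}$ is almost subordinate to $\mathcal{P}$ enters precisely here: it provides the uniform ``cascade'' control (a single $k\in\mathbb{N}_0$ with $Q_i\subset P_{j_i}^{k*}$ for all $i$) needed to reproduce $\mathcal{F}^{-1}(\psi_j u)$ from a uniformly bounded number of neighbouring pieces $\psi_k$ so that the convolution estimates chain without accumulating constants.

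Granting the pointwise estimate, the remainder is bookkeeping. Using $\sum_{j\in J}\psi_j\equiv 1$ on $\mathcal{O}\supseteq{\rm supp}(\varphi_i)$ and the uniform bound $\sup_i|J_i|\le N$ coming from weak equivalence, write $\varphi_i u=\sum_{j\in J_i}\varphi_i\psi_j u$, so that with $t=\min(p,1)$,
\[
 \|\mathcal{F}^{-1}(\varphi_i u)\|_{{\rm L}^p}^t \le \sum_{j\in J_i}\|\mathcal{F}^{-1}(\varphi_i\psi_j u)\|_{{\rm L}^p}^t \lesssim \sum_{j\in J_i}\|\mathcal{F}^{-1}(\psi_j u)\|_{{\rm L}^p}^t .
\]
Applying the $\ell^q_{v_1}$-norm in $i$, pulling the finite inner sum outside by an elementary $\ell^{q/t}$-estimate (legitimate since $N$ is fixed), interchanging the order of summation via $\sum_{i\in I}\sum_{j\in J_i}=\sum_{j\in J}\sum_{i\in I_j}$ and using $\sup_j|I_j|\le N$, and finally replacing $v_1(i)$ by $v_2(j)$ on overlapping pairs by means of $v_1\asymp v_2$, I arrive at $\|u\|_{\mathcal{D}(\mathcal{Q},{\rm L}^p,\ell^q_{v_1})}\lesssim\|u\|_{\mathcal{D}(\mathcal{P},{\rm L}^p,\ell^q_{v_2})}$. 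The cases $q=\infty$ and/or $p=\infty$ are handled by the obvious modifications, with suprema in place of sums. The reverse inequality follows by the symmetric computation based on $\psi_j u=\sum_{i\in I_j}\psi_j\varphi_i u$ and the pointwise estimate with the roles of $\varphi_i$ and $\psi_j$ interchanged, the reproducing step there being controlled by the admissibility of $\mathcal{Q}$ itself.

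The step I expect to be the main obstacle is the $p<1$ case of the pointwise estimate: making the convolution bound for band-limited ${\rm L}^p$-functions uniform over index pairs requires carefully combining the structured form of the BAPUs, the determinant comparison $|\det T_i|\asymp|\det S_j|$ coming from the scale hypothesis, and the uniform cascade control from almost subordinateness, so that no constant in the reproducing/convolution chain depends on the index. Everything else — the partition-of-unity expansions, the index-set bookkeeping, and the weight comparison — is routine once this estimate is in place.
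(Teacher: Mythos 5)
The paper does not supply an argument here: it simply cites \cite[Lemma 6.10]{Vo_Embed1}, adding only that the determinant bound required by that lemma follows from the given norm bounds. Your proposal therefore provides a proof where the paper provides only a pointer. The route you sketch — expand $\varphi_i u = \sum_{j \in J_i} \varphi_i \psi_j u$ using the BAPU for $\mathcal{P}$, control each piece by a convolution/multiplier estimate for band-limited $L^p$-functions, then redistribute the index sums via weak equivalence and $v_1 \asymp v_2$ — is exactly the standard mechanism that underlies such results (going back to Feichtinger–Gr\"obner and Triebel, and carried out in full generality by Voigtlaender), and your bookkeeping for the $\ell^q_v$-sum is correct. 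You have also correctly isolated the crux, namely uniformity of the constant in the $p<1$ convolution estimate, and correctly observed that the norm hypothesis $\| T_i^{-1}S_j\| + \|S_j^{-1}T_i\| \le C$ supplies the determinant comparability (the precise point the paper itself flags in its one-line proof). Two cautions. First, the almost-subordinateness hypothesis is one-sided — only $\mathcal{Q}$ is assumed almost subordinate to $\mathcal{P}$ — so the reverse inequality $\| u \|_{\mathcal{D}(\mathcal{P},\cdot)} \lesssim \| u \|_{\mathcal{D}(\mathcal{Q},\cdot)}$ cannot literally mirror the forward one; you acknowledge this in passing (``controlled by the admissibility of $\mathcal{Q}$ itself'') but that is precisely the step that takes real work in \cite{Vo_Embed1}, so it should not be waved through as a symmetric computation. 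Second, your description of where subordinateness enters (reproducing $\mathcal{F}^{-1}(\psi_j u)$ from neighbouring $\psi_k$'s) is plausible but imprecise; its actual role in the $p<1$ convolution chain is to guarantee that the Fourier supports of the pieces being convolved sit inside sets whose size, after the affine normalization by $T_i$ or $S_j$, is bounded independently of the index pair. Making that precise is the content of the cited lemma, and would need to be spelled out for a self-contained proof.
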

\begin{proof}
 This is \cite[Lemma 6.10]{Vo_Embed1}. Note that this result also requires an upper bound on $ |{\rm det}(S_j^{-1} T_i)|$, for all $i,j$ with $Q_i \cap P_j \not= \emptyset$, which in our setting follows from the bound on the norms.
\end{proof}

\section{Expansive Matrices and Homogeneous Quasi-Norms}

\label{sect:exp_matr}

In this section we collect the pertinent properties of expansive matrices. For more background on the following definitions and results, we refer to \cite{Bow03}.
Throughout this section, let $A$ denote an expansive matrix. We first note a number of observations regarding norms of (powers of) expansive matrices:

\begin{lemma} \label{lem:exp_norm}
 Let $\lambda_1,\ldots,\lambda_d \in \mathbb{C}$ denote the eigenvalues of $A$, counted with their algebraic multiplicities, and numbered to ensure
 \[
  1 < |\lambda_1| \le |\lambda_2| \le \ldots \le |\lambda_d| ~.
 \] Pick $1 < \lambda_-< |\lambda_1|$ and $\lambda_+ > |\lambda_d|$. Then there exists a constant $c>0$ such that, for all $j \ge 0$ and $x \in \mathbb{R}^d$:
 \begin{eqnarray*}
  \frac{1}{c} \lambda_-^j |x| & \le &  |A^j x| \le c \lambda_+^j |x| \\
  \frac{1}{c} \lambda_+^{-j} |x| & \le &  |A^{-j} x| \le c \lambda_-^{-j} |x| \\ 
 \end{eqnarray*}
As a consequence, we have 
\[
 \frac{1}{c} \lambda_-^j  \le  \| A^j \| \le c \lambda_+^j ~,~ \frac{1}{c} \lambda_+^{-j} \le  \|A^{-j} \| \le c \lambda_-^{-j} ~.
\]
\end{lemma}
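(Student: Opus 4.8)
The plan is to reduce everything to the Jordan normal form of $A$. First I would pass to the complex Jordan form: write $A = PJP^{-1}$ where $J$ is block-diagonal with Jordan blocks $J_k = \lambda I + N$, $N$ the nilpotent shift. Since $P$ and $P^{-1}$ are fixed invertible matrices, $\frac{1}{\|P^{-1}\|}|y| \le |P^{-1}y| \le \|P^{-1}\| \cdot |y|$ and similarly for $P$, so it suffices to prove the two-sided bounds $\frac{1}{c}\lambda_-^j |y| \le |J^j y| \le c\lambda_+^j|y|$ for the Jordan form, with the constant then absorbing $\|P\|$, $\|P^{-1}\|$. Because $J$ is block diagonal and the Euclidean norm of a vector is controlled above and below (up to the fixed constant $\sqrt{d}$) by the max of the norms of its blocks, it is enough to treat a single Jordan block.

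For a single block $J_k^j = (\lambda I + N)^j = \sum_{m=0}^{r-1}\binom{j}{m}\lambda^{j-m}N^m$ (where $r$ is the block size, so $N^r = 0$). The key estimate is that $\left\|J_k^j\right\| \le \sum_{m=0}^{r-1}\binom{j}{m}|\lambda|^{j-m}\|N\|^m \le C_k\, j^{r-1}|\lambda|^j$ for $j$ large, and since $|\lambda| \ge |\lambda_1| > \lambda_-$, the polynomial factor $j^{r-1}$ is dominated: $j^{r-1}|\lambda|^j \le c_k' \lambda_+^j$ once $\lambda_+ > |\lambda_d| \ge |\lambda|$ (choosing the constant large enough to cover finitely many small $j$). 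This gives the upper bound $|J_k^j y| \le c\lambda_+^j|y|$. For the lower bound, I would apply the upper bound to the inverse: $J_k^{-1} = (\lambda I + N)^{-1} = \lambda^{-1}(I + \lambda^{-1}N)^{-1} = \lambda^{-1}\sum_{m=0}^{r-1}(-\lambda^{-1})^m N^m$, which is again $\lambda^{-1}$ times (upper-triangular unipotent + nilpotent), so the same reasoning yields $\|J_k^{-j}\| \le c\lambda_-^{-j}$ (using $|\lambda| > \lambda_-$ so that $|\lambda|^{-j}j^{r-1} \le c\lambda_-^{-j}$, wait — here we need $|\lambda|^{-1} < \lambda_-^{-1}$, i.e. $|\lambda| > \lambda_-$, which holds). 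Then $|y| = |J_k^j J_k^{-j} y| \le \|J_k^j\|\cdot|J_k^{-j}y|$ combined with the bound on $\|J_k^j\|$... actually more directly: from $\|J_k^{-j}\|\le c\lambda_-^{-j}$ we get $|y| = |J_k^{-j}(J_k^j y)| \le c\lambda_-^{-j}|J_k^j y|$, hence $|J_k^j y| \ge \frac{1}{c}\lambda_-^j|y|$. The two bounds on $|A^{-j}x|$ follow symmetrically (or by replacing $A$ with $A^{-1}$, whose eigenvalues are $\lambda_k^{-1}$ with moduli in $(\lambda_+^{-1}, \lambda_-^{-1})$... one must be slightly careful matching $\lambda_\pm$ to the new matrix, but it is the same computation).

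Finally, the operator-norm consequences: $\|A^j\| = \sup_{|x|=1}|A^jx| \le c\lambda_+^j$ is immediate from the upper bound, and $\|A^j\| \ge \frac{1}{c}\lambda_-^j$ from the lower bound (take any unit $x$); the estimates for $\|A^{-j}\|$ are likewise read off.

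The main obstacle — really the only non-routine point — is handling the polynomial-in-$j$ factors $\binom{j}{m}$ coming from non-trivial Jordan blocks: one must observe that the \emph{strict} inequalities $\lambda_- < |\lambda_1|$ and $|\lambda_d| < \lambda_+$ are exactly what is needed to swallow these polynomial factors into the exponential, uniformly over the finitely many blocks, and that the single constant $c$ can be enlarged to also cover the finitely many small values of $j$ where the asymptotics have not yet kicked in. Everything else is bookkeeping about how the Euclidean norm interacts with the fixed change of basis $P$ and with the block decomposition.
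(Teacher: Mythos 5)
The paper does not actually prove this lemma: it is listed as background material, with the reader referred to Bownik's work for details. So there is no in-paper proof to compare against. That said, your argument is correct and is exactly the standard one: pass to Jordan normal form, absorb the fixed change-of-basis constants $\|P\|, \|P^{-1}\|$ into $c$, estimate $J_k^j = \sum_{m<r}\binom{j}{m}\lambda^{j-m}N^m$ block by block, and use the strict gaps $\lambda_- < |\lambda_1|$, $|\lambda_d| < \lambda_+$ to absorb the polynomial factors $j^{r-1}$ into the slightly larger/smaller exponentials. Your remark at the end correctly identifies the only non-routine point.

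Two small streamlinings you could make, though neither is necessary. First, once the two-sided bound $\frac{1}{c}\lambda_-^j|y|\le|A^jy|\le c\lambda_+^j|y|$ is established for all $y$, the second line of inequalities is immediate by the substitution $y = A^{-j}x$ (lower bound gives the upper bound on $|A^{-j}x|$, upper bound gives the lower); there is no need to re-run the Jordan analysis for $A^{-1}$ or to re-match $\lambda_\pm$. Second, since all blocks share the same bounding constants $\lambda_\pm$, you can combine them via $|Jy|^2=\sum_k|J_ky_k|^2$ directly, without passing through the max-of-blocks comparison; again, both work.
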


The following norm estimate will be useful, see \cite[Lemma 10.1]{Bow03}
\begin{lemma} \label{lem:norm_est}
 Let $c_1,c_2 >0$. Then there is $c_3 = c_3(c_1,c_2)>0$ such that for all matrices $A$ with $\| A \| \le c_1$ and $|{\rm det}(A)| \ge c_2$, the estimate $\| A^{-1} \| \le c_3$ holds.
\end{lemma}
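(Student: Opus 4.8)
The plan is to reduce everything to the classical cofactor formula for the inverse, which converts the lower bound on $|\det(A)|$ together with the upper bound on $\|A\|$ directly into an upper bound on $\|A^{-1}\|$. First I would note that the hypothesis $|\det(A)| \ge c_2 > 0$ already guarantees $A \in \mathrm{GL}(d,\RR)$, so $A^{-1}$ exists and Cramer's rule applies: $A^{-1} = \frac{1}{\det(A)}\,\mathrm{adj}(A)$, where $\mathrm{adj}(A)$ is the adjugate (transposed cofactor) matrix, whose entries are, up to sign, the $(d-1)\times(d-1)$ minors of $A$.

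The key step is to bound $\|\mathrm{adj}(A)\|$ purely in terms of $c_1$ and the dimension $d$. Since $|a_{k\ell}| = |\langle e_k, A e_\ell\rangle| \le \|A\| \le c_1$ for every entry of $A$, each $(d-1)\times(d-1)$ minor is bounded in absolute value by $(d-1)!\,c_1^{d-1}$ (via the Leibniz expansion of the determinant), or by Hadamard's inequality by $c_1^{d-1}(d-1)^{(d-1)/2}$. Either way, every entry of $\mathrm{adj}(A)$ is at most some explicit constant $\kappa(d, c_1)$, and hence $\|\mathrm{adj}(A)\| \le d\,\kappa(d,c_1) =: \kappa'(d,c_1)$ using the crude bound relating the operator norm to the largest entry (e.g. $\|M\| \le d \max_{k,\ell}|m_{k\ell}|$). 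Combining,
\[
 \|A^{-1}\| = \frac{1}{|\det(A)|}\,\|\mathrm{adj}(A)\| \le \frac{\kappa'(d,c_1)}{c_2} =: c_3(c_1,c_2),
\]
which is the desired estimate.

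As an alternative, purely qualitative argument, I would observe that the set $K = \{ A \in \RR^{d\times d} : \|A\| \le c_1,\ |\det(A)| \ge c_2 \}$ is closed and bounded, hence compact, and is contained in $\mathrm{GL}(d,\RR)$ because $|\det(A)| \ge c_2 > 0$ on $K$; since $A \mapsto \|A^{-1}\|$ is continuous on $\mathrm{GL}(d,\RR)$ (matrix inversion being continuous, as is the operator norm), it attains a finite maximum $c_3$ on $K$. This is shorter but gives no explicit constant; I would probably present the cofactor argument as the main proof since it is elementary and self-contained. There is no real obstacle here: the only thing to be slightly careful about is keeping the dimension-dependent combinatorial constants honest, but their precise value is irrelevant for the statement, so I would not track them beyond noting that they depend only on $d$.
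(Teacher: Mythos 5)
Your cofactor/Cramer's-rule argument is exactly the paper's proof, just spelled out in more detail; the paper simply notes that the entries of $A^{-1}$ are polynomials in $|\det(A)|^{-1}$ and the entries of $A$, both of which are bounded by hypothesis. The compactness argument you mention as an alternative is also valid but is not what the paper uses.
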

\begin{proof}
 By Cramer's rule, the entries of $A^{-1}$ are polynomials in $|{\rm det}(A)|^{-1}$ and the entries of $A$, and our assumptions provide upper bounds for both.
\end{proof}

\begin{definition} 
 An $A$-homogeneous quasi-norm is a Borel map  $\rho_A : \mathbb{R}^d \to \mathbb{R}_0^+$ satisfying the following conditions:
 \begin{enumerate}
  \item[(i)] $\rho_A(x) = 0$ if and only if $x = 0$.
  \item[(ii)] {\bf $A$-homogeneity}: $\rho_A(Ax) = |{\rm det}(A)| \rho_A(x)$.
  \item[(iii)] {\bf Triangle inequality}: There exists a constant $C>0$ such that, for all $x,y \in \mathbb{R}^d$,
  \[
   \rho_A(x+y) \le C(\rho_A(x)+\rho_A(y))~.
  \]
 \end{enumerate}
\end{definition}

The next lemma yields that any two quasi-norms that are homogeneous with respect to the same expansive matrix $A$ are equivalent. In the following, we use the term {\bf ellipsoid} for images $CB_1(0)$ of the unit ball (with respect to the euclidean norm) under some invertible matrix $C$. 
\begin{lemma}
\begin{enumerate}
 \item[(i)] Any two $A$-homogeneous norms on $\mathbb{R}^d$ are equivalent, i.e., given two such mappings $\rho_1,\rho_2$, there exists a constant $C \ge 1$ such that, for all $x \in \mathbb{R}^d$:
 \[
  \frac{1}{C} \rho_1(x) \le \rho_2(x) \le C \rho_1(x)~.
 \]
 \item[(ii)] There exists an ellipsoid $\Delta_A$ and $r>1$ such that 
 \[
  \Delta_A \subset r \Delta_A \subset A \Delta_A~,
 \] and $\lambda(\Delta_A) = 1$. Then, letting 
 \[
  \rho_A(x) = |{\rm det}(A)|^j 
 \] for $x \in A^{j+1} \Delta_A \setminus A^j \Delta_A$, and $\rho(0) = 0$, defines an $A$-homogeneous quasi-norm. 
\end{enumerate}
\end{lemma}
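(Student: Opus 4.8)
The plan is to prove part (ii) first and then deduce part (i) from it.

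\emph{Construction of $\Delta_A$ and verification that $\rho_A$ is a quasi-norm.} Following \cite{Bow03}, I would obtain $\Delta_A$ by a Lyapunov-type argument. By Lemma \ref{lem:exp_norm} the operator norms $\|A^{-k}\|$ tend to $0$, so one may fix $m \in \mathbb{N}$ large enough that the symmetric matrix $A^T A - (A^{-(m-1)})^T A^{-(m-1)}$ is positive definite, and set $P := \sum_{k=0}^{m-1} (A^{-k})^T A^{-k}$, which is symmetric positive definite. A direct computation gives $A^T P A - P = A^T A - (A^{-(m-1)})^T A^{-(m-1)}$, hence $A^T P A \ge (1+\varepsilon) P$ for some $\varepsilon > 0$; writing $\|x\|_P := (x^T P x)^{1/2}$, this means $\|Ax\|_P \ge r \|x\|_P$ with $r := \sqrt{1+\varepsilon} > 1$. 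The open $\|\cdot\|_P$-unit ball $\Delta^0 = P^{-1/2} B_1(0)$ is an ellipsoid, and the expansion estimate gives $\Delta^0 \subset r\Delta^0 \subset A\Delta^0$. Since both inclusions survive dilation of $\Delta^0$ by a positive scalar, setting $\Delta_A := \lambda(\Delta^0)^{-1/d}\Delta^0$ produces an ellipsoid of Lebesgue measure $1$ with $\Delta_A \subset r\Delta_A \subset A\Delta_A$.

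Iterating this inclusion yields $r^j\Delta_A \subset A^j\Delta_A$ for all $j \ge 0$, so $(A^j\Delta_A)_{j\in\mathbb{Z}}$ is a strictly increasing family with $\bigcup_j A^j\Delta_A = \mathbb{R}^d$ and $\bigcap_j A^j\Delta_A = \{0\}$; consequently the Borel sets $A^{j+1}\Delta_A\setminus A^j\Delta_A$, $j\in\mathbb{Z}$, partition $\mathbb{R}^d\setminus\{0\}$ and $\rho_A$ is a well-defined Borel function. Axiom (i) is immediate, and $A$-homogeneity holds because $x\mapsto Ax$ carries $A^{j+1}\Delta_A\setminus A^j\Delta_A$ onto $A^{j+2}\Delta_A\setminus A^{j+1}\Delta_A$. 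For the triangle inequality, fix $m_0\in\mathbb{N}$ with $r^{m_0}\ge 2$, so that $2\Delta_A\subset A^{m_0}\Delta_A$; assuming (without loss) that $x,y\neq 0$ and $\rho_A(x) = |{\rm det}(A)|^j \ge \rho_A(y)$, both $x$ and $y$ lie in $A^{j+1}\Delta_A$, and since the ellipsoid $\Delta_A$ is convex and symmetric, $x+y \in 2A^{j+1}\Delta_A \subset A^{j+1+m_0}\Delta_A$; therefore $\rho_A(x+y) \le |{\rm det}(A)|^{j+m_0} = |{\rm det}(A)|^{m_0}\rho_A(x) \le |{\rm det}(A)|^{m_0}\bigl(\rho_A(x)+\rho_A(y)\bigr)$. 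This proves (ii).

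\emph{Proof of (i).} By transitivity of equivalence it suffices to show that an arbitrary $A$-homogeneous quasi-norm $\rho$ is equivalent to the step quasi-norm $\rho_A$ of (ii). Put $K := A\Delta_A\setminus\Delta_A$; then $\mathbb{R}^d\setminus\{0\}$ is the disjoint union of the sets $A^jK$, $j\in\mathbb{Z}$, with $\rho_A\equiv |{\rm det}(A)|^j$ on $A^jK$, while $\rho(A^jx') = |{\rm det}(A)|^j\rho(x')$. Hence $\rho\asymp\rho_A$ is equivalent to $0 < \inf_{x'\in K}\rho(x') \le \sup_{x'\in K}\rho(x') < \infty$. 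Note $K$ is bounded, lies in the compact ellipsoid $\overline{A\Delta_A}$, and is bounded away from $0$ (an open ball about the origin is contained in $\Delta_A$). The upper bound is the main point, and here Borel measurability of $\rho$ is essential: the Borel sets $\{\rho\le n\}\cap\overline{A\Delta_A}$ increase to $\overline{A\Delta_A}$, which has finite positive Lebesgue measure, so $E := \{\rho\le n_0\}\cap\overline{A\Delta_A}$ has measure larger than $\frac{3}{4}\lambda(\overline{A\Delta_A})$ for a suitable $n_0$; as $\overline{A\Delta_A}$ is symmetric about $0$, the set $F := E\cap(-E)$ is symmetric, Borel and of positive measure, so by the Steinhaus theorem $F-F \supset B_\delta(0)$ for some $\delta > 0$. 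For $|w| < \delta$, writing $w = f_1 - f_2 = f_1 + (-f_2)$ with $f_1, -f_2\in F$, the quasi-triangle inequality yields $\rho(w) \le C\bigl(\rho(f_1)+\rho(-f_2)\bigr) \le 2Cn_0$. Thus $\rho$ is bounded on $B_\delta(0)$, and then $A$-homogeneity together with $\|A^{-k}\|\to 0$ shows that $\rho$ is bounded on every bounded set (in particular $\sup_K\rho<\infty$) and that $\rho(z)\to 0$ as $z\to 0$.

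It remains to bound $\rho$ from below on $K$. If $\inf_K\rho = 0$, choose $y_n\in K$ with $\rho(y_n)\to 0$; since $K$ is relatively compact, a subsequence converges to some $y^*\in\overline{A\Delta_A}$, and $y^*\neq 0$ because $K$ stays away from the origin. The quasi-triangle inequality gives $\rho(y^*) \le C\bigl(\rho(y_n)+\rho(y^*-y_n)\bigr)$, whose right-hand side tends to $0$ (using $\rho(y_n)\to 0$, $y^*-y_n\to 0$ and continuity of $\rho$ at $0$), forcing $\rho(y^*) = 0$, a contradiction. Hence $\inf_K\rho > 0$, so $\rho\asymp\rho_A$; applying this to two quasi-norms gives (i). I expect the one genuinely delicate step to be the \emph{a priori} finiteness of $\rho$ near the origin: $A$-homogeneity and the quasi-triangle inequality are each invariant under the $A$-scaling and by themselves merely loop back on one another, so an extra ingredient breaking this symmetry — here the density/Steinhaus argument exploiting Borel measurability — appears to be unavoidable.
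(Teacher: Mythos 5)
The paper does not prove this lemma itself --- it defers to Bownik \cite{Bow03} for properties of expansive matrices --- so there is no in-paper proof to compare against. Your argument is self-contained and correct. For (ii), the Lyapunov-type construction checks out: with $P = \sum_{k=0}^{m-1}(A^{-k})^T A^{-k}$ one gets $A^T P A - P = A^T A - (A^{-(m-1)})^T A^{-(m-1)}$, which is positive definite for $m$ large by Lemma \ref{lem:exp_norm}, whence $\|Ax\|_P \ge r\|x\|_P$ for some $r>1$; the nesting $\Delta_A \subset r\Delta_A \subset A\Delta_A$, the resulting partition of $\mathbb{R}^d\setminus\{0\}$ into the Borel annuli $A^{j+1}\Delta_A\setminus A^j\Delta_A$, the $A$-homogeneity, and the quasi-triangle inequality (via convexity and central symmetry of the ellipsoid together with $2\Delta_A\subset A^{m_0}\Delta_A$) all follow as you write.

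The real content is in (i), and you identify the crux correctly: the definition of an $A$-homogeneous quasi-norm asks only for Borel measurability, and homogeneity together with the quasi-triangle inequality --- both invariant under the $A$-scaling --- cannot by themselves yield local boundedness. Your Steinhaus argument supplies exactly the missing ingredient: a symmetric Borel subset $F$ of positive measure inside $\{\rho\le n_0\}\cap\overline{A\Delta_A}$ has $F-F\supset B_\delta(0)$, and the quasi-triangle inequality then bounds $\rho$ on $B_\delta(0)$ by $2Cn_0$, using that $f\in F$ forces both $\rho(f)\le n_0$ and $\rho(-f)\le n_0$. Homogeneity combined with $\|A^{-k}\|\to 0$ then propagates this to boundedness on compacta and to $\rho(z)\to 0$ as $z\to 0$, and your compactness argument for $\inf_K\rho>0$ closes the loop; note that $\overline{K}$ is bounded away from $0$ precisely because the open ellipsoid $\Delta_A$ contains a euclidean ball about the origin, so axiom (i) of the quasi-norm definition delivers the contradiction. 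All steps are sound. (A tiny cosmetic remark: any $n_0$ with $\lambda(E)>\tfrac{1}{2}\lambda(\overline{A\Delta_A})$ already gives $\lambda(F)>0$; the factor $\tfrac{3}{4}$ is not needed.)
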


As will be seen below, equivalence of induced quasi-norms is closely related to the equivalence relation $\sim_{\dot{B}}$. By contrast, the equivalence relation induced by the {\em inhomogeneous} spaces will be seen to depend on a slightly less restrictive type of equivalence:
\begin{definition} \label{defn:coarse}
 Let $\rho_A$ and $\rho_B$ denote two quasi-norms on $\mathbb{R}^d$. $\rho_A$ and $\rho_B$ are called {\bf coarsely equivalent} if there exist constants $c \ge 1$ and $R \ge 0$ such that
 \[
  \frac{1}{c} \rho_A - R \le \rho_B \le c \rho_A + R~. 
 \]
We call two expansive matrices $A$ and $B$ (coarsely) equivalent if and only if the induced quasi-norms are (coarsely) equivalent. 
\end{definition}

\begin{remark} \label{rem:coarse_equiv}
 \begin{enumerate}
  \item[(a)] The notion of coarse equivalence originates from geometric group theory, and also plays a role in operator theory and global analysis; see \cite{Roe} for an introduction. Clearly, equivalent quasi-norms are also coarsely equivalent. The converse will be seen to be false in general.
  \item[(b)] $\rho_A$ and $\rho_B$ are coarsely equivalent if and only if there exists $R>0$ and $c \ge 1$ with the property that for all $x \in \mathbb{R}$ with $|x| \ge R$, the inequalities
  \[
   \frac{1}{c} \rho_A(x) \le \rho_B(x) \le c \rho_A(x) ~.
  \]
  I.e., coarse equivalence can be understood as equivalence of the quasi-norms ``at infinity''. 
The elementary proof of the equivalence uses that $\rho_A$ and $\rho_B$ are bounded on compact sets. 
 \end{enumerate}
\end{remark}

We will be interested in understanding when two different matrices are (coarsely) equivalent. As a first step in this direction, we want to translate (coarse) equivalence of $A$ and $B$ to conditions involving certain products of powers of the two matrices. We first introduce a quantity that will frequently appear in the following criteria.
\begin{definition} \label{defn:eps}
 Let $A$ and $B$ be two expansive matrices. We let
 \[
  \epsilon(A,B) = \frac{\ln (|{\rm det}(A)|)}{\ln (|{\rm det}(B)|)}~.
 \]
\end{definition}

 Every real-valued matrix $A$ can be understood as inducing a linear map on $\mathbb{C}^d$, and in the following, eigenvalues and corresponding eigenvectors of such a matrix $A$ will be understood as possibly complex-valued. 
 See \cite[Lemma (10.2)]{Bow03} for a proof of the following statement. 
\begin{lemma} \label{lem:norm_equiv}
Let $A$ and $B$ be two expansive matrices. Then $A$ and $B$ are equivalent if and only if
$\sup_{k \in \mathbb{Z}} \left\| A^{-k} B^{\lfloor \epsilon k \rfloor} \right\| < \infty$  holds,
 with $\epsilon = \epsilon(A,B)$.
\end{lemma}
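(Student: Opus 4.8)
The plan is to characterize equivalence of $A$ and $B$ through the associated $A$- and $B$-homogeneous quasi-norms $\rho_A, \rho_B$, exploiting the ellipsoid-based models provided by the preceding lemma. Choose ellipsoids $\Delta_A, \Delta_B$ of Lebesgue measure $1$ with $\Delta_A \subset r_A \Delta_A \subset A\Delta_A$ and similarly for $B$, and let $\rho_A, \rho_B$ be the step quasi-norms built from them. Since any two $A$-homogeneous quasi-norms are equivalent, it suffices to prove the criterion for these particular ones. The key structural observation is the \emph{layer-matching} principle: because $|\det(A^j)| = |\det(A)|^j$ and $|\det(B)| = |\det(A)|^{\epsilon^{-1}}$ with $\epsilon = \epsilon(A,B)$, the $A$-annulus $A^{k+1}\Delta_A \setminus A^k\Delta_A$ and the $B$-annulus $B^{m+1}\Delta_B \setminus B^m\Delta_B$ carry the same $\rho$-value (up to a fixed multiplicative constant) precisely when $m = \lfloor \epsilon k \rfloor$, up to a bounded additive error in the exponent. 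So $\rho_A \asymp \rho_B$ should be equivalent to a uniform comparison of the nested ellipsoid families $(A^k\Delta_A)_k$ and $(B^{\lfloor \epsilon k\rfloor}\Delta_B)_k$, which in turn is governed by the boundedness of the matrices $A^{-k}B^{\lfloor \epsilon k\rfloor}$.

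Concretely, I would argue both implications. \emph{Suppose} $\sup_{k \in \mathbb{Z}} \|A^{-k} B^{\lfloor \epsilon k\rfloor}\| =: M < \infty$. Applying Lemma~\ref{lem:norm_est} — using that $|\det(A^{-k}B^{\lfloor \epsilon k\rfloor})| = |\det(A)|^{-k} |\det(B)|^{\lfloor \epsilon k \rfloor}$ stays in a fixed compact subinterval of $(0,\infty)$ since $\lfloor \epsilon k\rfloor = \epsilon k + O(1)$ — one also gets $\sup_k \|B^{-\lfloor \epsilon k\rfloor} A^k\| < \infty$. Hence $A^k \Delta_A$ and $B^{\lfloor \epsilon k\rfloor}\Delta_B$ are uniformly comparable ellipsoids: each is contained in a fixed dilate of the other. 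Given $x \neq 0$, let $k$ be the layer index with $x \in A^{k+1}\Delta_A \setminus A^k\Delta_A$, so $\rho_A(x) = |\det(A)|^k$; the uniform comparison forces $x$ to lie in $B^{m+1}\Delta_B \setminus B^{m'}\Delta_B$ for some $m, m'$ with $|m - \lfloor \epsilon k\rfloor|$ and $|m' - \lfloor \epsilon k\rfloor|$ bounded by a constant independent of $x$ (this uses the separation $\Delta_B \subset r_B \Delta_B \subset B\Delta_B$, which makes consecutive layers genuinely distinct). Therefore $\rho_B(x) = |\det(B)|^{m''}$ for some $m''$ within bounded distance of $\lfloor \epsilon k\rfloor$, and since $|\det(B)|^{\epsilon k} = |\det(A)|^k = \rho_A(x)$, we conclude $\rho_A(x) \asymp \rho_B(x)$ with constants independent of $x$; that is, $A \sim B$.

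For the \emph{converse}, suppose $A$ and $B$ are equivalent, so $\frac{1}{C}\rho_A \le \rho_B \le C\rho_A$ on $\mathbb{R}^d$. Then for each $k$, the ellipsoid $A^k\Delta_A$, being (roughly) the $\rho_A$-ball of radius $|\det(A)|^k$, is sandwiched between two $\rho_B$-balls of comparable radii, hence between $B^{m}\Delta_B$ and $B^{m+N}\Delta_B$ where $B^m\Delta_B$ is the largest $B$-layer-ellipsoid with $\rho_B$-value $\le$ const $\cdot|\det(A)|^k = $ const $\cdot|\det(B)|^{\epsilon k}$, so $m = \lfloor \epsilon k\rfloor + O(1)$, and $N$ is a fixed constant absorbing the quasi-norm triangle constants and the ellipsoid eccentricities. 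This gives, uniformly in $k$, inclusions $A^k \Delta_A \subset B^{\lfloor \epsilon k\rfloor + N'}\Delta_B$ and $B^{\lfloor \epsilon k\rfloor - N'}\Delta_B \subset A^k\Delta_A$ for a fixed $N'$. Rephrasing the first via $A^{-k}$: $\Delta_A \subset A^{-k}B^{\lfloor \epsilon k\rfloor + N'}\Delta_B = (A^{-k}B^{\lfloor\epsilon k\rfloor})(B^{N'}\Delta_B)$, and since $B^{N'}\Delta_B$ is a fixed ellipsoid, this bounds $A^{-k}B^{\lfloor \epsilon k\rfloor}$ below in the sense of not collapsing a fixed ball; combining with the analogous containment in the other direction and Lemma~\ref{lem:norm_est} (the determinant of $A^{-k}B^{\lfloor\epsilon k\rfloor}$ is again uniformly pinned), one extracts $\sup_{k}\|A^{-k}B^{\lfloor \epsilon k\rfloor}\| < \infty$.

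The main obstacle I anticipate is the bookkeeping of the \emph{bounded-but-nonzero discrepancies}: the floor function introduces an $O(1)$ error in the exponent, the quasi-norm triangle inequality introduces multiplicative slack, and the ellipsoids $\Delta_A, \Delta_B$ need not be nested in one another — only each in its own dilates. Keeping all these constants uniform in $k$ (and in $x$), and in particular verifying that "uniform comparability of the ellipsoid families $A^k\Delta_A$ and $B^{\lfloor\epsilon k\rfloor}\Delta_B$" is genuinely equivalent to "$\sup_k \|A^{-k}B^{\lfloor\epsilon k\rfloor}\| < \infty$" — which is exactly where Lemma~\ref{lem:norm_est} and Lemma~\ref{lem:exp_norm} do the real work by converting a one-sided norm bound plus a determinant bound into a two-sided bound — is the technical heart of the argument. (Since the statement cites \cite[Lemma (10.2)]{Bow03} for a proof, one may alternatively simply invoke that reference; the above is the route I would reconstruct.)
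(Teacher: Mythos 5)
Your argument is correct and follows essentially the same route as the paper (which for the homogeneous statement simply cites \cite[Lemma (10.2)]{Bow03}, but gives a full adaptation for the coarse analogue in Lemma~\ref{lem:char_cequiv_matr1}): pass between the norm condition and quasi-norm comparability via the homogeneity, use Lemma~\ref{lem:norm_est} together with the determinant bound $|\det(A^{-k}B^{\lfloor\epsilon k\rfloor})|\in[1,|\det B|]$ to upgrade a one-sided norm bound to a two-sided one, and match scales through the floor function. Your presentation is phrased in terms of the ellipsoid step quasi-norm and ``layer matching'' where the paper's argument works with arbitrary homogeneous quasi-norms and explicit constants $c_A,d_A,c_B,d_B$, but the two are the same proof.
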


\begin{remark} \label{rem:transpose}
 As a consequence of the characterization, we note that $A^T$ and $B^T$ are equivalent if and only if $A$ and $B$ are. 
 The first statement is equivalent to 
 \begin{eqnarray*}
  \infty & > & \sup_{k \in \mathbb{Z}} \left\| (A^T) ^{-k} (B^{T})^{\lfloor \epsilon k \rfloor} \right\|
  \\ &  = & \sup_{k \in \mathbb{Z}} \left\| \left( B^{\lfloor \epsilon k \rfloor} A^{-k} \right)^T \right\| \\
  & = & \sup_{k \in \mathbb{Z}} \left\|  B^{\lfloor \epsilon k \rfloor} A^{-k} \right\|
 \end{eqnarray*}
 whereas condition (c), applied to $A$ and $B$ in reverse order, yields
 \[
  \sup_{\ell \in \mathbb{Z}} \left\| B^{-\ell} A^{\lfloor \ell/\epsilon \rfloor} \right\| < \infty\, ,
 \] and it is not hard to see that these two conditions are equivalent.
 
 We note that the analogous statement for coarse equivalence is wrong, see the remark following Theorem \ref{thm:class_equiv_matr}. 
\end{remark}

We next give a version of Lemma \ref{lem:norm_equiv} for coarse equivalence, which will be central to the following. This is why its proof, which is a straightforward adaptation of the proof of  \cite[Lemma (10.2)]{Bow03}, is included.
\begin{lemma} \label{lem:char_cequiv_matr1} Let $A$ and $B$ be two expansive matrices.
 Then $A$ and $B$ are coarsely equivalent if and only if 
 \begin{equation} \label{eqn:cequiv_norm}
  \sup_{k \in \mathbb{N}} \| A^{-k} B^{\lfloor \epsilon k \rfloor} \| < \infty~,
 \end{equation} with $\epsilon = \epsilon(A,B)$.
\end{lemma}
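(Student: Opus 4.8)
The plan is to mimic the proof of Lemma \ref{lem:norm_equiv} (i.e. \cite[Lemma (10.2)]{Bow03}), but to replace the "equivalence of quasi-norms on all of $\mathbb{R}^d$" step by the "coarse equivalence = equivalence at infinity" characterization from Remark \ref{rem:coarse_equiv}(b). First I would fix $A$-homogeneous and $B$-homogeneous quasi-norms $\rho_A$ and $\rho_B$ using Lemma \ref{lem:exp_norm} together with the ellipsoid construction; by the equivalence of all homogeneous quasi-norms for a fixed matrix, the statement does not depend on the particular choice. The key elementary fact I will use repeatedly is the two-sided comparison between $\rho_A(x)$ and the euclidean length $|x|$ in the following logarithmic form: there is a constant $c \geq 1$ so that $c^{-1} |x|^{\ln|{\rm det}(A)|/\ln\lambda_+^A} \leq \rho_A(x) \leq c\, |x|^{\ln|{\rm det}(A)|/\ln\lambda_-^A}$ for $|x|$ large, which follows by writing $x \in A^{j+1}\Delta_A \setminus A^j \Delta_A$ and applying Lemma \ref{lem:exp_norm} to $A^{-j} x$. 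Actually, for the present argument the cleaner route is to avoid euclidean comparisons and work directly with the "layers" $A^j \Delta_A$.

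The heart of the matter is the following reformulation. Let $\epsilon = \epsilon(A,B)$, so $|{\rm det}(B)|^{\lfloor \epsilon k\rfloor}$ and $|{\rm det}(A)|^k$ are comparable up to a fixed multiplicative constant, uniformly in $k \in \mathbb{N}$. I claim $A$ and $B$ are coarsely equivalent iff $\rho_A \asymp \rho_B$ at infinity (Remark \ref{rem:coarse_equiv}(b) — here I should be slightly careful, since that remark is stated for quasi-norms $\rho_A, \rho_B$, which by Definition \ref{defn:coarse} are the homogeneous quasi-norms attached to the matrices, so this is exactly the definition of coarse equivalence of $A$ and $B$). Now $\rho_A(x) \asymp \rho_B(x)$ for all $x$ with $\rho_A(x)$ (equivalently $|x|$) large. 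Given such an $x$, pick $k = k(x) \in \mathbb{Z}$ with $x \in A^{k+1}\Delta_A \setminus A^k \Delta_A$, so $\rho_A(x) = |{\rm det}(A)|^k$ and $k \to +\infty$ as $|x| \to \infty$. Writing $x = A^k y$ with $y \in A\Delta_A \setminus \Delta_A$ (a fixed compact annulus bounded away from $0$), the condition $\rho_A \asymp \rho_B$ at infinity becomes: $\rho_B(A^k y) \asymp |{\rm det}(A)|^k$ uniformly for $k \in \mathbb{N}$ and $y$ in that annulus. Using $B$-homogeneity, $\rho_B(A^k y) = \rho_B\big(B^{\lfloor \epsilon k\rfloor} B^{-\lfloor \epsilon k\rfloor} A^k y\big) = |{\rm det}(B)|^{\lfloor \epsilon k\rfloor} \rho_B\big(B^{-\lfloor \epsilon k\rfloor} A^k y\big) \asymp |{\rm det}(A)|^k \rho_B\big(B^{-\lfloor \epsilon k\rfloor} A^k y\big)$. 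Hence the at-infinity equivalence $\rho_A \asymp \rho_B$ is equivalent to: $\rho_B\big(B^{-\lfloor \epsilon k\rfloor} A^k y\big) \asymp 1$ uniformly for $k \in \mathbb{N}$, $y \in A\Delta_A \setminus \Delta_A$. Since $\rho_B$ is bounded above and below on the compact annulus $A\Delta_A\setminus\Delta_A$ and is continuous/comparable to $|\cdot|^{\ln|{\rm det}B|/\ln\lambda_\pm^B}$, this uniform two-sided bound on $\rho_B(B^{-\lfloor \epsilon k\rfloor} A^k y)$ is in turn equivalent to a uniform two-sided bound on the matrices $B^{-\lfloor \epsilon k\rfloor} A^k$ restricted to that annulus, i.e. to $\sup_{k \in \mathbb{N}}\big(\|B^{-\lfloor \epsilon k\rfloor} A^k\| + \|A^{-k} B^{\lfloor \epsilon k\rfloor}\|\big) < \infty$.

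It then remains to observe that, for products of powers of expansive matrices of this shape, $\sup_{k\in\mathbb{N}}\|A^{-k}B^{\lfloor\epsilon k\rfloor}\| < \infty$ already forces $\sup_{k\in\mathbb{N}}\|B^{-\lfloor\epsilon k\rfloor}A^k\| < \infty$ — so that only condition \eqref{eqn:cequiv_norm} needs to be assumed. This is exactly the point where Lemma \ref{lem:norm_est} enters, just as in Bownik's proof: the matrices $C_k := A^{-k} B^{\lfloor \epsilon k\rfloor}$ satisfy $\|C_k\| \le c_1$ by hypothesis, and $|{\rm det}(C_k)| = |{\rm det}(A)|^{-k} |{\rm det}(B)|^{\lfloor \epsilon k\rfloor}$, which by the choice of $\epsilon$ lies between two fixed positive constants $c_2^{-1}$ and $c_2$; hence Lemma \ref{lem:norm_est} gives $\|C_k^{-1}\| = \|B^{-\lfloor \epsilon k\rfloor} A^k\| \le c_3$ uniformly in $k$. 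Chaining the equivalences established above then yields the Lemma.

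The main obstacle, and the only place requiring genuine care, is the "dictionary" step translating the uniform bound $\rho_B(B^{-\lfloor\epsilon k\rfloor}A^k y) \asymp 1$ on the fixed annulus into the uniform norm bound on the matrices $B^{-\lfloor\epsilon k\rfloor}A^k$, and controlling the passage between "for all large $|x|$" and "for all $k \in \mathbb{N}$ and $y$ in the annulus" without losing uniformity (in particular making sure the floor function $\lfloor\epsilon k\rfloor$ and the comparison of determinant powers introduce only bounded multiplicative errors). For the direction "norm bound $\Rightarrow$ $\rho_B\asymp 1$" one uses that $B^{-\lfloor\epsilon k\rfloor}A^k$ and its inverse are uniformly bounded, so they map the compact annulus into a fixed compact set bounded away from $0$, on which $\rho_B$ is bounded above and below; for the converse one argues by contradiction, extracting a sequence along which $\|B^{-\lfloor\epsilon k\rfloor}A^k\|\to\infty$ (or its inverse blows up) and a corresponding unit-length direction, contradicting $\rho_B$-boundedness via the growth comparison $\rho_B(z)\gtrsim |z|^{\delta}$ for large $|z|$ from Lemma \ref{lem:exp_norm}. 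Everything else is bookkeeping with homogeneity.
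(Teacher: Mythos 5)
Your proposal is correct and follows essentially the same route as the paper's proof: both translate coarse equivalence into a two-sided uniform estimate for the homogeneous quasi-norms on a fixed compact annulus via homogeneity, and both invoke Lemma \ref{lem:norm_est} together with the determinant bound $1 \le |\det(A^{-k}B^{\lfloor\epsilon k\rfloor})| \le |\det(B)|$ to get the reverse norm bound $\sup_k\|B^{-\lfloor\epsilon k\rfloor}A^k\| < \infty$ for free. The only cosmetic difference is that you phrase the argument as a single chain of biconditionals using the layers $A^{k+1}\Delta_A \setminus A^k\Delta_A$, whereas the paper proves the two implications separately using euclidean annuli $\{z : 1 \le |z| \le r\}$; the substance is identical.
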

\begin{proof}
Throughout the proof, let $\epsilon = \epsilon(A,B)$.  First assume that (\ref{eqn:cequiv_norm}) holds. We note that it is equivalent to 
 \begin{equation} \label{eqn:cequiv_norm_v}
 \sup_{k \ge k_0} \| A^{-k} B^{\lfloor \epsilon k \rfloor} \| < \infty
 \end{equation}
for any fixed $k_0 \in \mathbb{Z}$. Furthermore, one has for all $k \in \mathbb{Z}$ that 
\begin{equation} \label{eqn:dets_equiv}
1 \le  |\det(A^{-k} B^{\lfloor \epsilon k \rfloor})| \le |\det(B)|~,
\end{equation} Hence, Lemma \ref{lem:norm_est} and (\ref{eqn:cequiv_norm_v}) also imply 
\[
  \sup_{k \ge k_0} \| B^{-\lfloor \epsilon k \rfloor} A^k \| < \infty
\]
Hence there exist constants $0 < C \le D < \infty$, depending only on $k_0 \in \mathbb{Z}$  such that 
\[
 \forall k \ge k_0 \,, \, \forall z \in \mathbb{R}^d \setminus \{ 0 \} ~: ~ C \le \frac{|A^{-k} z|}{|B^{-\lfloor \epsilon k\rfloor} z|} \le D ~. 
\]

Fix $r>0$ such that for all $x \in \mathbb{R}^{d} \setminus \{ 0 \}$ there exists $k \in \mathbb{Z}$ such that
\[
 1 \le |A^{-k} x | \le r~,~ 1 \le |B^{-k} x | \le r~,
\] e.g., $r = \max(\| A \|,\| B \|)$.  Fix $x \in \mathbb{R}^d$ with $|x| \ge 1$, and let $k \in \mathbb{Z}$ with $1 \le |A^{-k} x| \le r$.   

By Lemma \ref{lem:exp_norm}, we have for a suitable number $\lambda_+>0$ depending on $A$ 
\[
 r \ge |A^{-k} x| \ge |x| \| A^{k}\|^{-1} \ge \lambda_+^{-k} /c
\] hence $k \ge k_0$,  with $k_0$ only depending on $A$. 

Define
\begin{eqnarray*}
 c_A = \inf \{ \rho_A(z) : 1 \le |z| \le r \}, & & d_A = \sup \{ \rho_A(z) : 1 \le |z| \le r \}, \\
 c_B = \inf \{ \rho_B(z) :  1/D \le |z| \le r/C \}, & & d_B = \sup \{ \rho_B(z) : 1/D \le |z| \le r/C \}~.
\end{eqnarray*}
Now the choice of $k$ and homogeneity of $\rho_A$ yields 
\begin{equation} \label{eqn:ineq_rhoA}
c_A |{\rm det}(A)|^k \le \rho_A(x) \le d_A |{\rm det}(A)|^k ~.
\end{equation} Furthermore, the choice of $k$ yields via (\ref{eqn:cequiv_norm_v}) that 
\begin{equation}\label{eqn:ineq_rhoB}
 1/D  \le |B^{-\lfloor \epsilon k \rfloor} x| \le r/C~,
\end{equation}
whence we get via 
\begin{eqnarray*}
 \rho_B(x) & = & |{\rm det}(B)|^{\lfloor \epsilon k \rfloor} \rho_B(B^{-\lfloor \epsilon k \rfloor} x) \le |{\rm det}(B)|^{\lfloor \epsilon k \rfloor} d_B \\
 & \le & |{\rm det}(A)|^k |{\rm det}(B)| d_B \le \rho_A(x) |\det(B)| d_B/c_A~.
\end{eqnarray*}
By a similar argument, 
\[
 \rho_B(x) \ge \rho_A(x) c_B/d_A~.
\]
To summarize, we have found constants $C_1,C_2>0$ such that, for all $x$ with $|x|\ge 1$, $C_1 \rho_A(x) \le \rho_B(x) \le C_2 \rho_A(x)$. Hence the two quasi-norms are coarsely equivalent, by Remark \ref{rem:coarse_equiv}(b).

For the converse statement, assume that $\rho_A$ and $\rho_B$ are equivalent, i.e., for suitable $R,C> 0$ and all $x \in \mathbb{R}$ with $|x|\ge R$, the inequalities
$1/C \rho_B(x) \le \rho_A(x) \le C \rho_B(x)$ hold.

By Lemma \ref{lem:norm_est}, there exists $\ell_0 \in \mathbb{N}$ such that $|B^{\ell}(x)| \ge |x|$ holds for all $\ell \ge \ell_0$. For all $k_0 \ge \lceil \ell_0/\epsilon \rceil +1$ and all
$x \in \mathbb{R}^d$, with $|x| = R$, it follows that $|B^{\lfloor \epsilon k \rfloor} x| \ge |x| \ge R$, hence the coarse equivalence assumption gives rise to 
\begin{eqnarray*}
 \rho_A (A^{-k} B^{\lfloor \epsilon k \rfloor}x) & = & |\det(A)|^{-k} \rho_A(B^{\lfloor \epsilon k \rfloor} x) =  C |\det(A)|^{-k} \rho_B(B^{\lfloor \epsilon k \rfloor} x) \\
 & = & C |\det(A)|^{-k} |\det(B)|^{\lfloor \epsilon k \rfloor} \rho_B(x) \le \underbrace{C |\det{B}| \sup \{ \rho_B(x) : |x| = R \}}_{=K}~. 
\end{eqnarray*}
Thus we have that 
\[
 \{ A^{-k} B^{\lfloor \epsilon k \rfloor} x : k \ge k_0, |x| = R \} \subset \{ x \in \mathbb{R}^d : \rho_A(x) \le K \} ~, 
\] and the right-hand side is bounded, hence contained in a ball of radius $R_0$ with respect to the euclidean norm. But this implies
\[
 \sup_{k \ge k_0} \| A^{-k} B^{\lfloor \epsilon k \rfloor} \| \le R_0~,
\] and the converse is shown. 
\end{proof}

The following remark notes some elementary consequences of Lemmas \ref{lem:norm_equiv} and \ref{lem:char_cequiv_matr1}. 
\begin{remark} \label{rem:equiv_JNF}
\begin{enumerate}
\item[(a)] Assume that the matrices $A,B$ have the same block diagonal structure
\[  A  = \left( \begin{array}{cccc} A_1 & & &    \\  & A_2 & & \\ & & \ddots & \\ & & & A_k \end{array} \right) \,\, , \,\,  B  = \left( \begin{array}{cccc} B_1 & & &    \\  & B_2 & & \\ & & \ddots & \\ & & & B_k \end{array} \right)
\] with the additional property that $\epsilon(A_i,B_i) = \epsilon(A,B)$, for $i=1,\ldots,k$. Then a straightforward application of the criteria for (coarse) equivalnce yields that $A$ and $B$ are (coarsely) equivalent if and only if $A_i$ and $B_i$ are, for all $i=1,\ldots,k$. 
\item[(b)] If $A,B$ and $A',B'$ are related by $A  = CA'C^{-1}$ and $B=CB'C^{-1}$, then $A$ and $B$ are (coarsely) equivalent if and only if $A'$ and $B'$ are. 
\end{enumerate}
\end{remark}

\section{Anisotropic Besov spaces viewed as decomposition spaces}

\label{sect:ani_dec}

We will now establish the connection between  anisotropic Besov spaces and decomposition spaces. First we need to introduce a class of coverings.

\begin{definition}
 Let $A$ denote an expansive matrix. Let $C \subset \mathbb{R}^d$ be open, such that $\overline{C}$ is a compact subset of $\mathbb{R}^d \setminus \{ 0 \}$, and define, for $j \in \mathbb{Z}$, 
 \[
  Q_j = A^j \overline{C}~.
 \]
 If $\bigcup_{j \in \mathbb{Z}} Q_j = \mathbb{R}^d \setminus \{ 0 \}$, 
 $\mathcal{Q} = (Q_j)_{j \in \mathbb{Z}}$ is called {\bf homogeneous covering induced by $A$}. An {\bf inhomogeneous covering induced by $A$} is given by the family $\mathcal{Q}_A^i = (Q_j^i)_{j \in \mathbb{N}_0}$, where $Q_j^i = Q_j = A^j \overline{C}$ for $j \ge 1$, and $Q_0^i = \overline{C_0}$, for a relatively compact open set $C_0$ with the property that
 \[
  \bigcup_{j \in \mathbb{N}_0} Q_j^i = \mathbb{R}^d~. 
 \]
 \end{definition}

 \begin{lemma} \label{lem:ind_cv_adm}
  Let $A$ denote an expansive matrix, $\mathcal{Q} = (A^j Q_0)_{j \in \mathbb{Z}}$ a covering induced by $A$, and $\mathcal{Q}^i$ an associated inhomogeneous covering. Then $\mathcal{Q}, \mathcal{Q}^i$ are almost structured admissible coverings.
 \end{lemma}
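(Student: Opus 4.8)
The plan is to verify directly the two clauses in the definition of \emph{almost structured admissible covering}: first admissibility (covering property plus bounded ratio of overlapping Lebesgue measures), then the existence of a standardization $((T_j), (b_j), (Q_j'))$ satisfying (i)--(iv). I will treat the homogeneous covering $\mathcal{Q} = (A^j \overline{C})_{j \in \ZZ}$ in detail; the inhomogeneous covering differs from it only in the single member $Q_0^i = \overline{C_0}$, and adjoining or modifying finitely many relatively compact sets to an almost structured admissible covering again yields one, so the inhomogeneous case follows with only cosmetic changes.

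First I would record the key homogeneity identities. Since $Q_j = A^j \overline{C}$, we have $\lambda(Q_j) = |\det A|^j \lambda(\overline{C})$, so $\lambda(Q_i)/\lambda(Q_j) = |\det A|^{i-j}$. Next I claim there is a bound $N$ on $|i - j|$ whenever $Q_i \cap Q_j \neq \emptyset$: indeed $Q_i \cap Q_j \neq \emptyset$ is equivalent to $\overline{C} \cap A^{j-i}\overline{C} \neq \emptyset$, and since $\overline{C}$ is a compact subset of $\RR^d \setminus \{0\}$, Lemma \ref{lem:exp_norm} (the lower bound $|A^m x| \ge \frac{1}{c}\lambda_-^m |x|$ for $m \ge 0$ and the analogous upper bound for $m \le 0$) forces $|j-i|$ to lie in a finite range depending only on $A$ and on $\sup_{x \in \overline{C}}|x|$, $\inf_{x \in \overline{C}}|x|$. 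This simultaneously gives the covering property (which is assumed in the definition of the induced covering), admissibility with constant $|\det A|^N$, and the fact that each index has boundedly many neighbors.

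For the standardization I would exploit the self-similar structure: fix an open bounded set $C'$ with $\overline{C'} \subset C$ and still $\bigcup_{j} A^j C' \supset \RR^d \setminus \{0\}$ (shrink $C$ slightly; possible since $\overline{C}$ is compact in the open set $\RR^d \setminus \{0\}$ and one can use finitely many dilates), and set $T_j = A^j$, $b_j = 0$, $Q_j' = C'$ for all $j$. Then (i) $\overline{T_j Q_j' + b_j} = A^j \overline{C'} \subset A^j C \subset A^j\overline{C} = Q_j$ ... actually one wants $\overline{A^j C'} \subset Q_j$, which holds since $\overline{C'} \subset C \subset \overline{C}$ gives $\overline{A^j C'} = A^j \overline{C'} \subset A^j \overline{C} = Q_j$; (iii) $\{Q_j' : j\} = \{C'\}$ is a single set, hence finite; (ii) $\sup_{Q_i \cap Q_j \neq \emptyset} \|T_i^{-1}T_j\| = \sup_{Q_i \cap Q_j \neq \emptyset}\|A^{j-i}\| \le \max_{|m| \le N}\|A^m\| < \infty$ by the neighbor bound above; and (iv) the family $(A^j C')_{j}$ is an admissible covering by the same Lebesgue-measure and neighbor-count estimates applied to $C'$ in place of $\overline{C}$ (and it does cover $\RR^d \setminus \{0\}$ by the shrinking step). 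For $\mathcal{Q}^i$, I use the same data for $j \ge 1$, and for $j = 0$ I take $T_0 = I$, $b_0 = 0$, $Q_0' = $ a suitable open subset of $C_0$; the finitely many extra overlaps involving $Q_0^i$ contribute only finitely many values to each supremum, so the bounds persist.

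The main obstacle, and the only place demanding care, is step two: producing the shrunken generator $C'$ with $\overline{C'} \subset C$ that still dilates to a covering of $\RR^d \setminus \{0\}$, and checking (iv) for it --- one must make sure that shrinking $\overline{C}$ does not destroy the covering property. This is handled by a compactness argument: $\{x : 1 \le \rho_A(x) < |\det A|\}$ (a fundamental annulus for the $A$-action) has compact closure in $\RR^d \setminus \{0\}$ and is covered by finitely many dilates $A^{j}C$, $|j| \le N_0$; by openness one can replace each such $C$ by a slightly smaller open set and still cover this annulus, hence, by $A$-homogeneity, all of $\RR^d \setminus \{0\}$. Everything else is a routine bookkeeping of finitely many quantities controlled by Lemma \ref{lem:exp_norm}.
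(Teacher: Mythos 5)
Your argument follows the same route as the paper: you take the standardization $T_j = A^j$, $b_j = 0$, and a single set $Q_j'$ independent of $j$, and you derive admissibility and the norm bound in (ii) from the neighbor-count estimate via Lemma~\ref{lem:exp_norm}. The measure computation $\lambda(Q_i)/\lambda(Q_j) = |\det A|^{i-j}$ combined with the bounded overlap range $|i-j| \le N$ is exactly the paper's argument. Where you depart is in condition (iv) of the standardization, which you (correctly) notice the paper does not address explicitly --- the paper simply asserts the standardization with $Q_j' = Q_0 = \overline{C}$, which is not even open.

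However, your fix for (iv) has a gap. You claim the fundamental annulus ``is covered by finitely many dilates $A^j C$, $|j|\le N_0$'' and then shrink. But the definition of the induced covering only assumes $\bigcup_j A^j \overline{C} = \RR^d\setminus\{0\}$, i.e.\ that the \emph{closed} dilates cover. This does not imply that the open dilates $A^j C$ cover: take $d=1$, $C=(1,2)\cup(-2,-1)$, $A = 2$, so $\bigcup_j A^j \overline{C} = \RR\setminus\{0\}$ while $\bigcup_j A^j C$ misses every point $\pm 2^j$. So the first step of your compactness argument fails, and no amount of shrinking afterward can recover the covering property --- shrinking makes it strictly harder, not easier. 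Notice also that shrinking is unnecessary for (i): the definition allows $\overline{T_j Q_j' + b_j} \subset Q_j$ with equality, so $Q_j' = C$ already gives $\overline{A^j C} = A^j\overline{C} = Q_j$. The shrinking step therefore only costs you the covering property without buying anything. The honest resolution is either to read the definition of an induced covering as implicitly requiring $\bigcup_j A^j C = \RR^d\setminus\{0\}$ (which is what holds for the coverings actually used in Theorem~\ref{thm:besov_as_decsp}, where $Q_j = \varphi_j^{-1}(\CC\setminus\{0\})$ is open), or to say explicitly that one may replace $C$ by a slightly \emph{larger} open set $C''\supset\overline{C}$ and redefine $Q_j := A^j\overline{C''}$, which does not change the resulting covering up to equivalence. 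As it stands, your step is incorrect rather than merely unproved, and would need one of these repairs.
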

\begin{proof}
First let us consider the homogeneous case.
It is clear that the homogeneous covering is almost structured, with standardization provided by $T_j = A^j$, $Q_j' = Q_0$ and $b_j  = 0$, for $j \in \mathbb{Z}$. Hence the only missing property is admissibility. By assumption, $Q_0 \subset \mathbb{R}^d$ has compact closure in $\mathbb{R}^d \setminus \{ 0 \}$.
Defining, for $R>1$, the {\bf annulus} 
 \[
  C_R = \{ x \in \mathbb{R}^d : R^{-1} < |x| < R \}~,
 \] for $R>1$. Compactness of $Q_0$ implies $Q_0 \subset C_R$ for $R$ sufficiently large. Now Lemma \ref{lem:exp_norm} implies the existence of $j_0 \in \mathbb{N}$ such that 
 \[ Q_j \cap Q_0 \subset A^j C_R \cap C_R = \emptyset \]
 for all $j$ with $|j|>j_0$. It follows that $Q_j \cap Q_i = \emptyset$ whenever $|j-i|>j_0$, and that entails admissibility of the covering. 

 In the inhomogeneous case, we use the same standardization as for the homogeneous case for positive $j$, as well as $Q_0 = C_0$, $T_0 = I_d$ and $b_0$, and see that the covering is almost structured. The remainder of the argument follows as before.
 \end{proof}

 \begin{lemma}
  Let $\mathcal{Q}$ and  $\mathcal{P}$ denote two coverings induced by the same matrix $A$, either both homogeneous or both inhomogeneous. Then $\mathcal{Q}$ and $\mathcal{P}$ are equivalent. 
 \end{lemma}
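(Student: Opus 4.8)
The plan is to compare the two coverings through a fixed $A$-homogeneous quasi-norm $\rho = \rho_A$, which reduces the geometric comparison to bookkeeping on the index set.

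The first step is a bounded-range estimate. Write $\mathcal{Q} = (A^j\overline{C})_{j}$ and $\mathcal{P} = (A^j\overline{C'})_{j}$, and set $a = \inf_{\overline{C}}\rho$, $b = \sup_{\overline{C}}\rho$, $a' = \inf_{\overline{C'}}\rho$, $b' = \sup_{\overline{C'}}\rho$; compactness of $\overline{C},\overline{C'}$ away from the origin gives $0 < a \le b < \infty$ and $0 < a' \le b' < \infty$. If $y \in A^j\overline{C}\cap A^m\overline{C'}$, then $A$-homogeneity forces $\rho(y) \in [|\det A|^j a,|\det A|^j b]\cap[|\det A|^m a',|\det A|^m b']$, hence $a/b' \le |\det A|^{m-j} \le b/a'$, so $|m-j|\le N$ for a constant $N$ depending only on $a,b,a',b'$ and $|\det A|$. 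Since $\mathcal{P}$ covers $\mathbb{R}^d\setminus\{0\}\supseteq A^j\overline{C}$, every point of $Q_j = A^j\overline{C}$ lies in some $P_m$, necessarily with $|m-j|\le N$; thus $Q_j\subseteq\bigcup_{|m-j|\le N}P_m$ for every $j$.

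Next I would upgrade this index range to a graph ball. The intersection graph of $\mathcal{P}$ — vertices $\mathbb{Z}$, with $m\sim m'$ iff $P_m\cap P_{m'}\ne\emptyset$ — is invariant under $m\mapsto m+1$, since $P_m\cap P_{m'}\ne\emptyset \Leftrightarrow \overline{C'}\cap A^{m'-m}\overline{C'}\ne\emptyset$ depends only on $m'-m$; let $S\subseteq\mathbb{Z}\setminus\{0\}$ be the corresponding (finite, by Lemma \ref{lem:ind_cv_adm}) set of differences. I claim $\gcd(S)=1$: if $g:=\gcd(S)\ge 2$, the sets $U_r := \bigcup_{m\equiv r\ \pmod{g}}P_m$, $r=0,\dots,g-1$, are pairwise disjoint, nonempty, closed (the covering is locally finite — only finitely many $P_m$ meet any compact set, as in the proof of Lemma \ref{lem:ind_cv_adm}), and together cover $\mathbb{R}^d\setminus\{0\}$, contradicting its connectedness. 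Hence the graph is connected, and by shift-invariance the number $c_0$ of steps from $j$ to $j+1$ is independent of $j$, so $m$ is reachable from $j$ in at most $c_0|m-j|$ steps. Therefore $Q_j\subseteq\bigcup_{|m-j|\le N}P_m\subseteq P_j^{(c_0\, N)*}$, i.e. $\mathcal{Q}$ is almost subordinate to $\mathcal{P}$; exchanging the roles of $\mathcal{Q}$ and $\mathcal{P}$ gives the reverse, so the two coverings are equivalent. For the inhomogeneous coverings the same argument works for all large indices, and the finitely many low-index sets — including $Q_0^i,P_0^i$, the unique members containing $0$ — are absorbed into the constants (the quasi-norm estimate now bounds the relevant indices only from above, and $\mathbb{R}^d$ being connected keeps the graph of $\mathcal{P}^i$ connected).

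The main obstacle is the step $\gcd(S)=1$, i.e. ensuring the intersection graph of $\mathcal{P}$ is connected with uniformly bounded local diameter; without it, $Q_j$ might straddle several components of the graph and no single $P_j^{k*}$ would contain it. For $d=1$, where $\mathbb{R}^d\setminus\{0\}$ is disconnected, I would instead run the homogeneous argument separately on the two half-lines, using that $\bigcup_j A^j\overline{C}=\mathbb{R}\setminus\{0\}$ forces $\overline{C}$ to meet both of them; the inhomogeneous case is unaffected since $\mathbb{R}$ is connected.
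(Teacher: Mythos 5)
Your argument is correct and reaches the same conclusion, but via a more explicit, combinatorial route than the paper. The paper's proof is shorter and purely topological: it notes that $\bigcup_{k\in\mathbb N}P_0^{k*}$ is a nonempty, relatively open and closed subset of $\mathbb{R}^d\setminus\{0\}$ (using local finiteness of $\mathcal P$), hence equals it by connectedness; relative compactness of $Q_0$ then forces $Q_0\subset P_0^{k*}$ for a single $k$, and the scaling identity $A^jP_0^{k*}=P_j^{k*}$ (shift-invariance of the intersection graph of $\mathcal P$) propagates this to all $j$. You instead obtain a quantitative index bound $|m-j|\le N$ from an $A$-homogeneous quasi-norm, then show the intersection graph of $\mathcal P$ is connected and shift-invariant (the $\gcd(S)=1$ argument) to convert the index bound into a star bound $c_0N$. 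What your route buys, besides making all constants visible, is that it exposes a genuine gap in the paper's proof: the appeal to connectedness of $\mathbb{R}^d\setminus\{0\}$ fails when $d=1$, and your fix (run the argument on each half-line, using that $\overline{C}$ must meet both since $\bigcup_j A^j\overline{C}=\mathbb{R}\setminus\{0\}$) is correct. Two small points to tighten: you should rule out $S=\emptyset$ before taking $\gcd(S)$ (the same clopen-decomposition argument excludes it, since $P_0$ and $\bigcup_{m\neq 0}P_m$ would then be disjoint nonempty closed sets covering $\mathbb{R}^d\setminus\{0\}$); and the inhomogeneous case deserves one more sentence, namely that $\bigcup_k (P_0^i)^{k*}=\mathbb{R}^d$ by connectedness of $\mathbb{R}^d$, so the finitely many low-index $Q_j^i$, being compact, are each contained in some $(P_0^i)^{k*}$, while for large $j$ the homogeneous argument applies verbatim because the sets and their intersection pattern coincide with the homogeneous ones there.
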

\begin{proof}
 We first treat the homogeneous case. By assumption, $Q_j = A^j Q_0$ and $P_i = A^i P_0$.  Since $\mathbb{R}^d \setminus \{ 0 \}$ is connected, it follows that 
 \[
  \bigcup_{k \in \mathbb{N}} P_0^{k*} = \mathbb{R}^d \setminus \{ 0 \}~,
 \] and since the interiors of the $P_0^{k*}$ are open and cover $\mathbb{R}^d \setminus \{ 0\}$, and $Q_0$ is relatively compact in $\mathbb{R}^d \setminus \{ 0 \}$, it follows that
 \[
  Q_0 \subset P_0^{k*}
 \] for some $k \in \mathbb{N}$. But then, by construction of the induced coverings,
 \[
  Q_j  = A^j Q_0 \subset A^j P_0^{k*} = P_j^{k*}~,
 \] for all $j \in \mathbb{Z}$, and thus $\mathcal{Q}$ is almost subordinate to $\mathcal{P}$. Now symmetry yields equivalence of the coverings. 

 Now $Q_j \cap P_i \not= \emptyset$ is equivalent to 
 \[
 A^{j-i} Q_0 \cap P_0 \not= \emptyset~.
 \] Using $Q_0,P_0 \subset C_R$ for $R$ sufficiently large, followed by the argument from the proof of Lemma \ref{lem:ind_cv_adm}, 
 yields that $|j-i|<j_0$, and thus 
 \[
 \left\|  A^{-i} A^{j} \right\| \le C~,
 \] for a constant $C$ independent of $i,j$. 
 
 The statement concerning the inhomogeneous case follows from these observations, and from the fact that every compact subset $K \subset \mathbb{R}^d$ is contained in 
 $\tilde{Q}_0^{i*}$, for sufficiently large $i$. 
\end{proof}

\begin{lemma}
For any two coverings $\mathcal{Q} = (A^j Q_0)_{j \in \mathbb{Z}}$ and $\mathcal{P}= (A^i P_0)_{i \in \mathbb{Z}}$ induced by the same matrix $A$, one has 
\[ \mathcal{D}(\mathcal{Q},{\rm L}^p,\ell^q_{v}) = \mathcal{D}(\mathcal{P},{\rm L}^p,\ell^q_{v}) ~.\]
The same statement holds for inhomogeneous coverings. 
\end{lemma}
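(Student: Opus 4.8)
The plan is to obtain the equality as a direct application of Lemma \ref{lem:suf_dc_equal}, with $v_1 = v_2 = v$, since the two lemmas immediately preceding this statement supply essentially all of its hypotheses. Recall that Lemma \ref{lem:suf_dc_equal} requires: that $\mathcal{Q}$ and $\mathcal{P}$ be weakly equivalent almost structured admissible coverings of the same open set; that $\mathcal{Q}$ be almost subordinate to $\mathcal{P}$; that the weights be related by $v_1 \asymp v_2$; and that there be a uniform bound $\|T_i^{-1} S_j\| + \|S_j^{-1} T_i\| \le C$ over all pairs $(i,j)$ with $Q_i \cap P_j \ne \emptyset$, where $(T_i)$, $(S_j)$ come from the standardizations. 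So the proof amounts to checking these four items.

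Consider first the homogeneous case $\mathcal{Q} = (A^j Q_0)_{j \in \ZZ}$, $\mathcal{P} = (A^i P_0)_{i \in \ZZ}$. By Lemma \ref{lem:ind_cv_adm} both are almost structured admissible coverings of $\mathcal{O} = \RR^d \setminus \{0\}$, with standardizations $T_j = A^j$, $b_j = 0$, $Q_j' = Q_0$ and $S_i = A^i$, $c_i = 0$, $P_i' = P_0$. The preceding lemma shows that $\mathcal{Q}$ and $\mathcal{P}$ are equivalent coverings, so in particular $\mathcal{Q}$ is almost subordinate to $\mathcal{P}$; moreover its proof yields the sharper fact that $Q_j \cap P_i \ne \emptyset$ forces $|j - i| < j_0$ for a fixed $j_0$ depending only on $A, Q_0, P_0$. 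From this, weak equivalence is immediate (both $|J_i|$ and $|I_j|$ are $\le 2 j_0 + 1$), and for every pair with $Q_j \cap P_i \ne \emptyset$ one has
\[
\|T_j^{-1} S_i\| + \|S_i^{-1} T_j\| = \|A^{i-j}\| + \|A^{j-i}\| \le \max_{|m| < j_0}\bigl( \|A^m\| + \|A^{-m}\| \bigr) =: C ,
\]
which is the required norm bound. For the weight condition $v \asymp v$ it suffices, since the relevant supremum ranges only over pairs with $|j - i| < j_0$, to know that a $\mathcal{Q}$-moderate weight has $v(j)/v(j')$ bounded whenever $|j - j'|$ is bounded (this also shows $v$ is $\mathcal{P}$-moderate, so the right-hand space is defined). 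That in turn holds because the overlap graph of $\mathcal{Q}$ on $\ZZ$ is connected, with its word metric bi-Lipschitz to the usual one: the set of overlap increments $\{ m : Q_0 \cap Q_m \ne \emptyset\}$ is finite, symmetric, contains $0$, and generates $\ZZ$ — otherwise $\RR^d \setminus \{0\}$ would split as a disjoint union of the (closed, by local finiteness of an admissible covering) sets $\bigcup_{m \equiv r} Q_m$ over residues $r$ modulo some $k \ge 2$, contradicting connectedness. Feeding all of this into Lemma \ref{lem:suf_dc_equal} gives the asserted equality.

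For the inhomogeneous coverings the argument is verbatim for indices $\ge 1$, where the standardizations coincide with the homogeneous ones. The only extra point is the index $0$, where $T_0 = S_0 = I_d$ and $Q_0^i$, $P_0^i$ are the compact low-pass sets: one checks that $Q_0^i \cap P_i^i \ne \emptyset$ (resp. $Q_j^i \cap P_0^i \ne \emptyset$) forces $i$ (resp. $j$) into a bounded range, because a fixed compact set is met by $A^m \overline{C}$ only for $m$ in a bounded range, by Lemma \ref{lem:exp_norm}; hence $\|T_0^{-1} S_i\| + \|S_i^{-1} T_0\| = \|A^i\| + \|A^{-i}\|$ stays bounded, similarly in the symmetric case, and equals $2$ when $i = j = 0$. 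Almost subordinateness, weak equivalence and $v \asymp v$ transfer exactly as above, so Lemma \ref{lem:suf_dc_equal} again applies. The only mildly technical part of the whole argument is the transfer of moderateness of $v$ just described; everything else is bookkeeping of hypotheses already established in Lemmas \ref{lem:ind_cv_adm}, \ref{lem:exp_norm} and the preceding lemma.
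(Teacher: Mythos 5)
Your proposal is correct and follows essentially the same route as the paper: invoke Lemma \ref{lem:suf_dc_equal} after extracting from the preceding lemma (and its proof) that the coverings are equivalent, that overlaps force $|j-i|<j_0$, and hence that the norm bound $\|A^{i-j}\|+\|A^{j-i}\|\le C$ holds. Your additional discussion of the weight condition $v\asymp v$ (via the connectedness/generating argument) is a genuine point that the paper's proof leaves implicit, and it is a welcome bit of rigor, but it does not change the overall strategy.
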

\begin{proof}
The previous lemma shows that $\mathcal{Q}$ and $\mathcal{P}$ are equivalent. Hence in order to apply Lemma \ref{lem:suf_dc_equal},
it remains to show that $\|A^{j-i}\| < C$, for all pairs $i,j$ with $Q_j \cap P_i \not= 0$. But the proof of the previous lemma shows that $Q_j \cap P_i \not= \emptyset$ entails $|j-i|< j_0$, with a fixed $j_0$, hence the norm estimate holds as well. 
\end{proof}

We next want to identify (homogeneous and inhomogeneous) anisotropic Besov spaces as special cases of decomposition spaces. In the context of this paper, the chief purpose of this result is to make Theorem \ref{thm:rigidity} and \ref{lem:suf_dc_equal} available for the discussion of anisotropic Besov spaces. It is however of some independent interest, since it allows include the anisotropic Besov spaces in a unified view onto a large range of decomposition spaces (e.g., $\alpha$-modulation spaces, curvelet smoothness spaces, wavelet coorbit spaces, etc.), see \cite[Chapter 6]{VoDiss}. For the case of inhomogeneous Besov spaces associated to diagonal matrices, the following result is \cite{BorupNielsenDecomposition}, for isotropic Besov spaces, it can be found in \cite[Lemma 6.2.2]{VoDiss}. Our proof is an adaptation of the proof for the latter to the anisotropic setting.

In order to motivate the following proof, we rewrite the Besov space norm of a tempered distribution $f$ as a decomposition space norm of its Fourier transform:
\begin{eqnarray*}
  \| f \|_{\dot{B}_{p,q}^\alpha} & = & \left\| \left( \| f \ast \psi_j \|_{p} \right)_{j \in \mathbb{Z}} \right\|_{\ell^q_{v_{\alpha,A}}}
  \\ & = &  \left\| \left( \left\| \mathcal{F}^{-1} \left( \widehat{f} \cdot  \phi_j \right) \right\|_{p} \right)_{j \in \mathbb{Z}} \right\|_{\ell^q_{v_{\alpha,A}}}
\end{eqnarray*} where we use $\phi_j = \widehat{\psi}_j$. Provided this family is a BAPU of a suitable covering, the right-hand side becomes a decomposition space norm, which suggests that the Fourier transform induces an isomorphism between the two spaces. There is however one subtlety to consider: Note that the ``reservoir'' of candidates for elements in the decomposition spaces $\mathcal{D}(\mathcal{P},{\rm L}^p,\ell^q_{v_{\alpha,A}})$ consists of {\em distributions} on the open set $\mathcal{O} = \mathbb{R}^d \setminus \{ 0 \}$, whereas $\widehat{f}$ is {\em tempered}. Hence the remaining part of the proof consists mostly in showing that every element of the decomposition space is in fact (the restriction of) a tempered distribution. 

A first step in this direction is the following theorem, see \cite[Theorem 8.2]{Vo_Embed1}; observe also the remark following that Theorem.
\begin{theorem} \label{thm:embed_dc_tempered}
 Let $\mathcal{Q} = (Q_i)_{i \in I}$ denote an almost structured admissible covering of $\mathcal{O} \subset \mathbb{R}^d$ with standardization 
 with standardization $((T_i)_{i \in I}, (b_i)_{i \in I}, (Q_i')_{i \in I})$ and BAPU $(\varphi_i)_{i \in I}$. 
 Let $v$ denote a $Q$-moderate weight on $I$. For each $N \in \mathbb{N}_0$, 
 define 
 \[
  w^{(N)} = (w_i^{(N)})_{i \in I}~, w_i^{(N)} =  |{\rm det}(T_i)|^{1/p} \max \left\{ 1, \| T_i^{-1} \|^{d+1} \right\} \left[ \inf_{\xi \in Q_i} (1+|\xi|) \right]^{-N}~.
 \]
 Let $I_0 \subset I$ and
 assume that, for some $N \in \mathbb{N}$ one has $w^{(N)} / v \in \ell^1(I_0)$. Then the map
 \begin{eqnarray*}
  \Phi & : &  \mathcal{D}(\mathcal{Q},{\rm L}^p,\ell^q_v) \to \mathcal{S}'(\mathbb{R}^d)~ \\
  \Phi(f) & : & \mathcal{S}(\mathbb{R}^d) \ni g \mapsto \sum_{i \in I_0} \langle \varphi_i \cdot f, g \rangle
 \end{eqnarray*}
is well-defined.
\end{theorem}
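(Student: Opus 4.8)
The plan is to show that the series defining $\Phi(f)(g)$ converges absolutely for every $g \in \mathcal{S}(\mathbb{R}^d)$, with a bound that is continuous in $g$ with respect to the Schwartz topology, so that $\Phi(f)$ is a well-defined tempered distribution. First I would recall that for each fixed $i$, the product $\varphi_i \cdot f$ is a compactly supported distribution, so the pairing $\langle \varphi_i \cdot f, g \rangle$ makes sense; by the Fourier-analytic description this equals $\langle \mathcal{F}^{-1}(\varphi_i \cdot f), \mathcal{F}^{-1}(\tilde g)\rangle$ up to standard identifications, and since $\mathcal{F}^{-1}(\varphi_i \cdot f)$ is a smooth function (indeed it lies in ${\rm L}^p$ by the definition of the decomposition space norm), one can write $\langle \varphi_i \cdot f, g\rangle = \int \mathcal{F}^{-1}(\varphi_i \cdot f)(x)\, \check{g}(x)\, dx$ and estimate this by $\| \mathcal{F}^{-1}(\varphi_i \cdot f)\|_{{\rm L}^p}$ times a suitable weighted ${\rm L}^{p'}$-type norm of $\check g$. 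Because $\mathcal{F}^{-1}(\varphi_i \cdot f)$ has Fourier support in the bounded set $Q_i$, a Bernstein/Nikolskii-type inequality lets one pass between ${\rm L}^p$ and ${\rm L}^1$ or ${\rm L}^\infty$ at the cost of a factor controlled by $|{\rm det}(T_i)|^{1/p}$ and powers of $\|T_i^{-1}\|$ — this is exactly where the factor $|{\rm det}(T_i)|^{1/p}\max\{1,\|T_i^{-1}\|^{d+1}\}$ in $w^{(N)}$ comes from.

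Next I would bound the contribution of $g$: since $g \in \mathcal{S}(\mathbb{R}^d)$, for any $N$ one has $|g(x)| \lesssim (1+|x|)^{-d-1-?}$, and more to the point the pairing with a function Fourier-localized in $Q_i$ produces a decay factor $[\inf_{\xi \in Q_i}(1+|\xi|)]^{-N}$ against a Schwartz seminorm of $g$ — this is the origin of the $[\inf_{\xi \in Q_i}(1+|\xi|)]^{-N}$ term in $w^{(N)}$. Collecting these estimates, each summand is bounded by $C_N \cdot w_i^{(N)} \cdot \| \mathcal{F}^{-1}(\varphi_i \cdot f)\|_{{\rm L}^p} \cdot \|g\|_{\mathcal{S},N}$ for an appropriate Schwartz seminorm $\|g\|_{\mathcal{S},N}$. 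Summing over $i \in I_0$ and applying Hölder's inequality in the form $\sum_{i \in I_0} a_i b_i \le \|(a_i)\|_{\ell^{q'}} \|(b_i)\|_{\ell^q}$ with $a_i = w_i^{(N)}/v(i)$ and $b_i = v(i)\|\mathcal{F}^{-1}(\varphi_i \cdot f)\|_{{\rm L}^p}$, together with the hypothesis $w^{(N)}/v \in \ell^1(I_0)$ (which dominates any $\ell^{q'}$-norm since $q' \ge 1$, or handles the $q < 1$ case directly via $\ell^1 \hookrightarrow \ell^q$-duality subtleties), yields $|\Phi(f)(g)| \le C_N \|f\|_{\mathcal{D}(\mathcal{Q},{\rm L}^p,\ell^q_v)} \|g\|_{\mathcal{S},N} \cdot \|w^{(N)}/v\|_{\ell^1(I_0)} < \infty$. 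This shows both absolute convergence and continuity of $\Phi(f)$ on $\mathcal{S}(\mathbb{R}^d)$.

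Finally I would check that the definition is independent of the representative in $\mathcal{D}'(\mathcal{O})$ and, where relevant, of the BAPU, and that $\Phi$ is linear — all routine from the above estimates. Since this theorem is cited verbatim from \cite[Theorem 8.2]{Vo_Embed1}, I would in the actual writeup simply invoke that reference; the sketch above indicates the mechanism. The main obstacle is bookkeeping the three competing factors in $w_i^{(N)}$: one must carefully track how the Bernstein inequality on a set of the shape $T_i Q_i' + b_i$ produces exactly $|{\rm det}(T_i)|^{1/p}$ and $\|T_i^{-1}\|^{d+1}$ (the exponent $d+1$ coming from needing an integrable majorant $(1+|x|)^{-(d+1)}$ after the affine change of variables), and simultaneously extract enough decay in $\xi$ from the Schwartz seminorms of $g$ to produce the $N$-th power factor — all uniformly in $i$, which is what makes the $\ell^1(I_0)$-summability hypothesis the right one.
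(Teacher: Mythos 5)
The paper does not give a proof of this theorem: it is stated with the attribution ``see \cite[Theorem 8.2]{Vo_Embed1}'' and no argument follows, so there is no paper-internal proof to compare against. You correctly identify this and would likewise cite Voigtlaender. Your surrounding sketch of the underlying mechanism is essentially right, and it matches exactly how the present paper \emph{uses} the theorem inside the proof of Theorem \ref{thm:besov_as_decsp}: pair $\varphi_i f$ against a ``fat'' cutoff $\varphi_i^*g$ with $\varphi_i\varphi_i^*=\varphi_i$, pass to the Fourier side, apply the $\| \cdot\|_\infty$--$\|\cdot\|_1$ H\"older split, upgrade $\|\mathcal{F}^{-1}(\varphi_i f)\|_\infty$ to $|{\det T_i}|^{1/p}\,\|\mathcal{F}^{-1}(\varphi_i f)\|_p$ via a Nikolskii/Bernstein estimate (this is where the $|{\det T_i}|^{1/p}$ comes from; in the paper it is the cited \cite[Lemma 5.3]{Vo_Embed1}), and control $\|\mathcal{F}(\varphi_i^* g)\|_1$ by $\sum_{|\alpha|\le d+1}\|\partial^\alpha(\varphi_i^* g)\|_1$ so that Leibniz plus the chain rule produce $\|T_i^{-1}\|^{d+1}$ and Schwartz decay of $g$ produces $\bigl[\inf_{\xi\in Q_i}(1+|\xi|)\bigr]^{-N}$. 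Summing in $i$ against $w^{(N)}/v\in\ell^1(I_0)$, which dominates the needed $\ell^{q'}$- or $\ell^\infty$-norm for every $0<q\le\infty$, closes the estimate. Two small points: the Parseval identity you wrote should read $\langle\varphi_i f,g\rangle=\langle\mathcal{F}^{-1}(\varphi_i f),\mathcal{F}g\rangle$ (not $\mathcal{F}^{-1}\tilde g$) in the paper's conventions, and the theorem as stated fixes a BAPU, so the independence-of-BAPU check you flag at the end is not actually part of this claim. Neither affects the substance; the proposal is a correct reconstruction of the cited argument.
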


 \begin{theorem} \label{thm:besov_as_decsp}
 Let $A$ denote an expansive matrix, and let $\mathcal{Q}_A$ denote a homgeneous covering induced by $A^T$. For $\alpha \in \mathbb{Z}$, define the weight
 \[
  v_{\alpha,A} : \mathbb{Z} \to \mathbb{R}^+~,v_{\alpha,A}(j) = |{\rm det}(A)|^{j \alpha}~.
 \]
 Denote by $\rho: \mathcal{S}'(\mathbb{R}^d) \to \mathcal{D}'(\mathbb{R}^d)$ the restriction map. Then $\rho \circ \mathcal{F}$
is a topological isomorphism 
 \[
 \rho \circ  \mathcal{F} : \dot{B}_{p,q}^\alpha(A) \to \mathcal{D}(\mathcal{Q}_A,{\rm L}^p,\ell^q_{v_{\alpha,A}})~.
 \]
 Similarly, if $\mathcal{Q}_A^i$ denote an inhomogeneous covering induced by $A^T$, then
  \[
 \rho \circ \mathcal{F} : {B}_{p,q}^\alpha(A) \to \mathcal{D}(\mathcal{Q}_A^i,{\rm L}^p,\ell^q_{v_{\alpha,A}})~.
 \]
is a topological isomorphism, as well. Here  $v_{\alpha,A}$ denotes the restriction of the weight for the homogeneous setting to $\mathbb{N}_0$.  
 \end{theorem}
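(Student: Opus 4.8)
The plan is to carry the Besov norm directly into the decomposition-space framework by inserting $\widehat f$ into the decomposition norm, and then to overcome the mismatch of reservoirs --- $\mathcal{S}'(\mathbb{R}^d)$ modulo $\mathcal{P}$ on the Besov side, $\mathcal{D}'(\mathcal{O})$ on the decomposition side --- by showing that every element of the decomposition space is in fact the restriction of a tempered distribution with finite Besov norm. For the setup, by Remark~\ref{rem:wavelets}(b) fix an $A$-wavelet $\psi$ satisfying~(\ref{eqn:def_wv2_strong}); since ${\rm supp}(\widehat\psi)$ is a compact subset of $\mathbb{R}^d\setminus\{0\}$, it lies in a bounded open set $C$ with $\overline{C}\subset\mathbb{R}^d\setminus\{0\}$, and we take $\mathcal{Q}_A=((A^T)^j\overline{C})_{j\in\mathbb{Z}}$, which by Lemma~\ref{lem:ind_cv_adm} is an almost structured admissible covering of $\mathcal{O}=\mathbb{R}^d\setminus\{0\}$ with standardization $T_j=(A^T)^j$, $b_j=0$. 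Because $\widehat{\psi_j}(\xi)=\widehat\psi((A^T)^{-j}\xi)$, the family $(\widehat{\psi_j})_{j\in\mathbb{Z}}$ is supported subordinate to $\mathcal{Q}_A$, sums to $1$ on $\mathcal{O}$, satisfies $\mathcal{F}^{-1}(\widehat{\psi_j}\cdot\widehat f)=f\ast\psi_j$, and --- this is the fact that makes the norms coincide --- has $\|\psi_j\|_{{\rm L}^1}=\|\psi\|_{{\rm L}^1}$ independent of $j$. The inhomogeneous case is prepared analogously with $\psi_0,\psi$ chosen to satisfy~(\ref{eqn:def_wv2_strong_ih}), over $\mathcal{O}=\mathbb{R}^d$.

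The first step is to prove the norm identity up to equivalence: for every $f\in\mathcal{S}'(\mathbb{R}^d)$ one has $\|\mathcal{F}f\|_{\mathcal{D}(\mathcal{Q}_A,{\rm L}^p,\ell^q_{v_{\alpha,A}})}\asymp\|f\|_{\dot{B}_{p,q}^\alpha(A)}$. This is the standard BAPU-comparison argument: fix any ${\rm L}^p$-BAPU $(\varphi_i)_{i\in\mathbb{Z}}$ for $\mathcal{Q}_A$; admissibility of $\mathcal{Q}_A$ provides a uniform bound $M$ on $|\{j:Q_j\cap Q_i\neq\emptyset\}|$, so $\varphi_i\widehat f=\sum_{|j-i|\le M}\varphi_i\widehat{\psi_j}\widehat f$, and by (quasi-)Young's inequality together with the uniform control of $\mathcal{F}^{-1}\varphi_i$ furnished by the BAPU property one gets $\|\mathcal{F}^{-1}(\varphi_i\widehat f)\|_{{\rm L}^p}\lesssim\sum_{|j-i|\le M}\|f\ast\psi_j\|_{{\rm L}^p}$; taking $\ell^q_{v_{\alpha,A}}$-norms and using $\mathcal{Q}_A$-moderateness of $v_{\alpha,A}$ to absorb the finite shift yields ``$\lesssim$''. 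The reverse bound is symmetric: $\widehat{\psi_j}\widehat f=\sum_{i:\,Q_i\cap Q_j\neq\emptyset}\varphi_i\widehat{\psi_j}\widehat f$ and $\|f\ast\psi_j\|_{{\rm L}^p}\le\sum_i\|\psi_j\ast\mathcal{F}^{-1}(\varphi_i\widehat f)\|_{{\rm L}^p}\lesssim\sum_{|i-j|\le M}\|\mathcal{F}^{-1}(\varphi_i\widehat f)\|_{{\rm L}^p}$, where the decisive point is that convolution by $\psi_j$ is bounded on ${\rm L}^p$ with a $j$-independent constant --- for $p\ge1$ because $\|\psi_j\|_{{\rm L}^1}=\|\psi\|_{{\rm L}^1}$, and by the analogous scale-invariant estimate for $0<p<1$. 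In particular $\rho\circ\mathcal{F}$ maps $\dot{B}_{p,q}^\alpha(A)$ boundedly into $\mathcal{D}(\mathcal{Q}_A,{\rm L}^p,\ell^q_{v_{\alpha,A}})$, and it is injective since $\widehat f|_{\mathcal{O}}=0$ forces ${\rm supp}(\widehat f)\subset\{0\}$, i.e.\ $f\in\mathcal{P}$.

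The main step is surjectivity. Given $u\in\mathcal{D}(\mathcal{Q}_A,{\rm L}^p,\ell^q_{v_{\alpha,A}})\subset\mathcal{D}'(\mathcal{O})$, split $u=\sum_{j\ge0}\varphi_j u+\sum_{j<0}\varphi_j u$ in $\mathcal{D}'(\mathcal{O})$. For the high-frequency part apply Theorem~\ref{thm:embed_dc_tempered} with $I_0=\mathbb{N}_0$: by Lemma~\ref{lem:exp_norm}, $\|(A^T)^{-j}\|\to0$ and $\inf_{\xi\in Q_j}(1+|\xi|)$ grows at a fixed geometric rate as $j\to+\infty$, so $w_j^{(N)}/v_{\alpha,A}(j)\lesssim|{\rm det}(A)|^{j/p}\lambda_-^{-jN}|{\rm det}(A)|^{-j\alpha}$ is summable over $\mathbb{N}_0$ once $N$ is large, and one obtains a tempered distribution extending $\sum_{j\ge0}\varphi_j u$. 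For the low-frequency part, write $f_j=\mathcal{F}^{-1}(\varphi_j u)$; its spectrum lies in the annulus $Q_j=(A^T)^j\overline C$, which shrinks to $\{0\}$ as $j\to-\infty$, and a Bernstein/H\"older estimate --- convolving $f_j$ with $\mathcal{F}^{-1}$ of a fixed $A^T$-dilate of a bump equal to $1$ on $\overline C$ --- gives $\|f_j\|_{{\rm L}^\infty}\lesssim|{\rm det}(A)|^{j/p}\|f_j\|_{{\rm L}^p}$, while $\|f_j\|_{{\rm L}^p}\lesssim|{\rm det}(A)|^{-j\alpha}$ from $u\in\ell^q_{v_{\alpha,A}}$. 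Testing against $g\in\mathcal{S}(\mathbb{R}^d)$ and using that, modulo $\mathcal{P}$, one may subtract from $\widehat g$ its Taylor polynomial at $0$ of arbitrarily high degree --- after which $\widehat g(\xi)=O(|\xi|^N)$ on $Q_j$, hence $O(\lambda_-^{jN})$ --- one finds that $\sum_{j<0}f_j$ converges in $\mathcal{S}'(\mathbb{R}^d)/\mathcal{P}$ for $N$ chosen large relative to $\alpha$ and $p$; the polynomial ambiguity here is precisely the obstruction to extending a distribution on $\mathbb{R}^d\setminus\{0\}$ across $0$, and coincides with the identification built into $\dot{B}_{p,q}^\alpha(A)$ (this is the standard realization of homogeneous Besov spaces inside $\mathcal{S}'(\mathbb{R}^d)/\mathcal{P}$; cf.\ \cite{Bow05}). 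Writing $f$ for the sum of the two pieces, we get $f\in\mathcal{S}'(\mathbb{R}^d)/\mathcal{P}$ with $\varphi_k\widehat f=\varphi_k u$ for all $k$, hence $\widehat f|_{\mathcal{O}}=u$ since $\sum_k\varphi_k\equiv1$ on $\mathcal{O}$; and $\|f\|_{\dot{B}_{p,q}^\alpha(A)}\lesssim\|u\|_{\mathcal{D}(\mathcal{Q}_A,{\rm L}^p,\ell^q_{v_{\alpha,A}})}$ by the previous step, so $f\in\dot{B}_{p,q}^\alpha(A)$ and $\rho\circ\mathcal{F}(f)=u$.

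Combining the two steps, $\rho\circ\mathcal{F}$ is a linear bijection satisfying $\|f\|_{\dot{B}_{p,q}^\alpha(A)}\asymp\|\rho\circ\mathcal{F}(f)\|_{\mathcal{D}(\mathcal{Q}_A,{\rm L}^p,\ell^q_{v_{\alpha,A}})}$, hence a topological isomorphism of these complete (quasi-)normed spaces (completeness as recalled in Remark~\ref{rem:def_besov}). The inhomogeneous assertion follows the same scheme but is easier: the reservoir is $\mathcal{S}'(\mathbb{R}^d)$ with no polynomial quotient, $\mathcal{O}=\mathbb{R}^d$, the index set is $\mathbb{N}_0$, the $j=0$ term of a decomposition is a compactly supported --- hence tempered --- distribution, and Theorem~\ref{thm:embed_dc_tempered} applied with $I_0=\mathbb{N}_0$ (using, for $j\ge1$, the summability estimate above) directly supplies the tempered extension; the norm identity of the first step transfers verbatim since the $j=0$ term introduces no scaling. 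The one genuinely delicate point is thus the low-frequency analysis in the homogeneous case --- showing that elements of the decomposition space, a priori only distributions on $\mathbb{R}^d\setminus\{0\}$, extend modulo polynomials to tempered distributions of finite Besov norm --- whereas the rest is bookkeeping with admissible coverings, (quasi-)Young's inequality, and the scale invariance of $\|\psi_j\|_{{\rm L}^1}$.
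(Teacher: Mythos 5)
Your proposal follows the paper's proof essentially step by step: norm comparison via the wavelet partition $\widehat{\psi}_j$ as an ${\rm L}^p$-BAPU, surjectivity split into high frequencies (handled by Theorem~\ref{thm:embed_dc_tempered} with $I_0=\mathbb{N}_0$) and, in the homogeneous case, low frequencies handled by subtracting Taylor polynomials of the (frequency-side) test function; your Bernstein/H\"older step and the geometric decay of the low-frequency contributions match the paper's estimates, and the inhomogeneous case is treated as the easier variant without the $\Phi_2$-type extension. The only cosmetic difference is that the paper simply takes $\varphi_j=\widehat{\psi}_j$ and obtains exact norm equality, whereas you re-derive the equivalence against an arbitrary BAPU (which amounts to re-proving BAPU-independence), but the core ideas and structure are identical.
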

\begin{proof}
We first consider the inhomogeneous case.
Let $\psi_0,\psi$ denote a pair of functions fulfilling (\ref{eqn:def_wv2_strong_ih}), and define $\varphi_j = \widehat{\psi}_j$ for $j \ge 0$. 
Then $(\varphi_j)_{j \in  \mathbb{N}_0}$ is a BAPU relative to the admissible covering $Q_j = \varphi_j^{-1}(\mathbb{C} \setminus \{ 0 \})$. In fact, the BAPU is $p$-admissible, since
\[
  |{\rm det}(T_j)|^{\frac{1}{t}-1} \| \mathcal{F}^{-1} \varphi_j \|_{{\rm L}^p} = \| \psi \|_{{\rm L}^p}
\] holds for all $j  >0$ and all $p \in (0,1]$, with $t=\min(p,1)$. Using this family to compute the decomposition space norm, we find that
\begin{eqnarray*}
 \| f \|_{{B}_{p,q}^\alpha (A)} & = & \left\| \left( \| f \ast \psi_j \|_{{\rm L}^p} \right)_{j \in \mathbb{N}_0} \right\|_{\ell^q_{v_{\alpha,A}}} \\
 & = & \left\| \left( \| \mathcal{F}^{-1}(\widehat{f}  \cdot  \varphi_j)  \|_{{\rm L}^p} \right)_{j \in \mathbb{N}} \right\|_{\ell^q_{v_{\alpha,A}}} \\
 & = & \| \widehat{f } \|_{\mathcal{D}(\mathcal{Q},{\rm L}^p,\ell^q_{v_\alpha,A})}~.
\end{eqnarray*}
Hence the Fourier transform, mapping $f$ to (the restriction to $C_c^\infty(\mathbb{R}^d)$ of) its Fourier transform, is an isometric embedding of  $\dot{B}_{p,q}^\alpha(A)$ into $\mathcal{D}(\mathcal{Q},{\rm L}^p,\ell^q_{v_{\alpha,A}})$, and it remains to show that it is onto. 
To this end, we consider the auxiliary map
\[
 \Phi : \mathcal{D}(\mathcal{Q},{\rm L}^p,\ell^q_{v_{\alpha,A}}) \to \mathcal{S}'(\mathbb{R}^d)~.
\] defined by 
\[
 (\Phi(f)) (g) = \sum_{j \ge 0} f(\varphi_j \cdot g)~.
\]
In order to apply Theorem \ref{thm:embed_dc_tempered}, we need an estimate for
\[
 w^{(N)}(j) = |\det(T_j)|^{1/p} \max \{ 1, \| T_j^{-1} \|^{d+1}\} \left[ \inf_{\xi \in Q_j} (1+|\xi|) \right]^{-N}
\] where $T_j = (A^T)^j$, for $j>0$. Picking $\epsilon >0$ with $\mathbf{B}_\epsilon(0) \cap Q_1 = \emptyset$, we get by Lemma \ref{lem:exp_norm} that
\[
 \inf_{\xi \in Q_j} |\xi| \ge C \lambda_-^j~,
\] whenever $\lambda_-$ is strictly between $1$ and the smallest eigenvalue modulus of $A$. In addition, $\sup_{j \ge 0} \| T_j ^{-1}\|$ is bounded, hence we obtain   
\[
 \frac{w^{(N)}(j)}{v_{\alpha,A}(j)} \le C' |\det(A)|^{j/p-\alpha j} \lambda_-^{-jN}~,
\] in particular, $\frac{w^{(N)}}{v_{\alpha,A}} \in \ell^1(\mathbb{N}_0)$ as soon as $|\det(A)|^{1/p-\alpha} < \lambda_-^N$. Hence $\Phi$ is well-defined. 

Now, given any $f \in  \mathcal{D}(\mathcal{Q},{\rm L}^p,\ell^q_{v_{\alpha,A}}) $ and every $g \in \mathcal{D}(\mathbb{R}^d)$, we have that $g = \sum_{j=0}^M g \ast \varphi_j$ for $M$ sufficiently large, and thus 
\[
 f(g) = \sum_{j=0}^M f(\varphi_j \cdot g) = (\Phi(f))(g)~. 
\]  Thus $\rho \circ \Phi$ is the identity operator on $\mathcal{D}(\mathcal{Q}, {\rm L}^p,\ell^q_{v_{\alpha,A}})$. 
Now for any $f \in \mathcal{D}(\mathcal{Q},{\rm L}^p,\ell^q_{v_{\alpha,A}}) $ we can define $u = \mathcal{F}^{-1}(\Phi(f))$, and obtain that $\rho(\widehat{u})$ = $f$. 

We nowturn to the homogeneous case. We choose a wavelet $\psi$ fulfilling the condition (\ref{eqn:def_wv2_strong}), and define $\varphi_j = \widehat{\psi}_j$. Just as above, $(\varphi_j)_{j \in  \mathbb{Z}}$ is a $p$-admissible BAPU relative to the admissible covering $Q_j = \varphi_j^{-1}(\mathbb{C} \setminus \{ 0 \})$, and as before
\begin{eqnarray*}
 \| f \|_{\dot{B}_{p,q}^\alpha (A)} & = & \left\| \left( \| f \ast \psi_j \|_{{\rm L}^p} \right)_{j \in \mathbb{Z}} \right\|_{\ell^q_{v_{\alpha,A}}} \\
 & = & \left\| \left( \| \mathcal{F}^{-1}(\widehat{f}  \cdot  \varphi_j)  \|_{{\rm L}^p} \right)_{j \in \mathbb{Z}} \right\|_{\ell^q_{v_{\alpha,A}}} \\
 & = & \| \widehat{f } \|_{\mathcal{D}(\mathcal{Q},{\rm L}^p,\ell^q_{v_\alpha,A})}~.
\end{eqnarray*}
Again, it remains to see that $\rho \circ \mathcal{F}$ is onto. This time, we need auxiliary mappings
\[
 \Phi_{1,2} : \mathcal{D}(\mathcal{Q},{\rm L}^p,\ell^q_{v_{\alpha,A}})) \to \mathcal{S}'(\mathbb{R}^d)~,
\]
with
\[
 \Phi_1 (f)(g) = \sum_{j \ge 0} f(\varphi_j \cdot g)~,
\] 
and $\Phi_2$ defined later. 
Just as in the inhomogeneous case it follows that $\Phi_1(f) \in \mathcal{S}'(\mathbb{R}^d)$ holds for all $f \in \mathcal{D}(\mathcal{Q},{\rm L}^p,\ell^q_{v_{\alpha,A}})$. 

The small frequencies require more work. 
Given any Schwartz function $g$ and $N \in \mathbb{N}$, let $P_N g$ denote the Taylor polynomial of $g$ around zero,
\[
 P_N(g)(\xi) = \sum_{|\alpha|<N} \frac{\partial^\alpha g(0)}{\alpha !} \xi^\alpha~.
\]
Note that here we used the notation $|\alpha| = \sum_{i=1}^d \alpha_i$  for the length of a multi-index $\alpha$, somewhat in conflict to our notation for the euclidean length. However, no serious confusion can arise from this in the following. 

We then define, for any $f \in \mathcal{D}(\mathcal{Q},{\rm L}^p,\ell^q_{v_{\alpha,A}})$ and $g \in \mathcal{S}(\mathbb{R}^d)$,
\begin{equation} \label{eqn:Phi2}
 \Phi_2(f)(g) = \sum_{j < 0} f(\varphi_j \cdot (g - P_N g))~. 
\end{equation}
Our aim is to show that, for $N$ sufficiently large, the right-hand side converges and yields a tempered distribution. 

As a first step towards convergence of the right-hand side, we write
\[
 \varphi_j^* = \sum_{i: Q_i \cap Q_j \not= \emptyset} \varphi_i~,
\]
which implies $\varphi_j \cdot \varphi_j^* = \varphi_j$, and thus
\begin{eqnarray*}
 \left| f (\varphi_j (g-P_N g))\right|  & = & \left| (\varphi_j \cdot f) (\varphi_j^* \cdot (g-P_N g) \right| \\
 & = & \left| \mathcal{F}^{-1}( \varphi_j \cdot f) (\mathcal{F}(\varphi_j^* \cdot (g-P_N g))\right| ~.
 \end{eqnarray*}
 Here we employed the definition of the Fourier transform of tempered distributions by duality. Note now that by assumption, the tempered distribution $\mathcal{F}^{-1}( \varphi_j \cdot f)$ is an ${\rm L}^p$-function, whereas $(\mathcal{F}(\varphi_j^* \cdot (g-P_N g))$ is a Schwartz function. Hence we can continue estimating
 \begin{eqnarray} \nonumber
 \left| f (\varphi_j (g-P_N g))\right| & \le &  \left\| \mathcal{F}^{-1}( \varphi_j \cdot f) \right\|_{\infty} \left\|   (\mathcal{F}(\varphi_j^* \cdot (g-P_N g))\right\|_1 \\
 \nonumber & \le &  \left\| \mathcal{F}^{-1}( \varphi_j \cdot f) \right\|_{\infty} \left\|_\infty   (\mathcal{F}(\varphi_j^* \cdot (g-P_N g))\right\|_1 \\
 \label{eqn:partial_holder} & \le & C  |{\rm det}(A)|^{j/p} \left\| \mathcal{F}^{-1}( \varphi_j \cdot f) \right\|_{p} \left\|   (\mathcal{F}(\varphi_j^* \cdot (g-P_N g))\right\|_1 ~,
 \end{eqnarray}
 with the last estimate due to \cite[Lemma 5.3]{Vo_Embed1}, furnishing a constant $C$ that is independent of $j$ and $f$. 
We now sum the terms from (\ref{eqn:partial_holder}) and get 
 \begin{eqnarray}
 \nonumber
 \lefteqn{\sum_{j < 0} \left| f (\varphi_j (g-P_N g))\right| \le } \\ \nonumber & \le & C \sum_{j<0}  |{\rm det}(A)|^{j/p} \left\| \mathcal{F}^{-1}( \varphi_j \cdot f) \right\|_{p} \left\|   (\mathcal{F}(\varphi_j^* \cdot (g-P_N g))\right\|_1 \\
 & \le & C \sum_{j<0} \left( v_{\alpha,A}(j)  \left\| \mathcal{F}^{-1}( \varphi_j \cdot f) \right\|_{p} \right) \left(   |{\rm det}(A)|^{j/p} v_{\alpha,A}(j)^{-1}  \left\|   (\mathcal{F}(\varphi_j^* \cdot (g-P_N g))\right\|_1 \right) \nonumber \\ & \le & 
 \| f \|_{\mathcal{D}(\mathcal{Q},{\rm L}^p, \ell^q_{v_{\alpha,A}})} \left\|  \left(   |{\rm det}(A)|^{j/p} v_{\alpha,A}(j)^{-1}  \left\|   (\mathcal{F}(\varphi_j^* \cdot (g-P_N g))\right\|_1 \right)_{j < 0} \right\|_{\ell^{\infty}} \label{eqn:zwischen}~.
\end{eqnarray}
This puts the norms $ \left\|   (\mathcal{F}(\varphi_j^* \cdot (g-P_N g))\right\|_1$ in the focus of our attention. Here the usual estimates relating decay of the Fourier transform to norms of the derivatives provides
\[
 \|  (\mathcal{F}(\varphi_j^* \cdot (g-P_N g)) \|_1 \le M \sum_{|\alpha| \le d+1} \left\| \partial^\alpha \left[ \varphi_j^* \cdot (g-P_N g) \right] \right\|_1~,
\] see e.g. \cite[Lemma 3.5]{Fu_coorbit}. 
Using the Leibniz formula for derivatives of products yields
\begin{equation} \label{eqn:leibniz}
 \partial^\alpha \left[ \varphi_j^* \cdot (g-P_N g) \right](\xi) = \sum_{\beta + \gamma = \alpha} \left( \begin{array}{c} \alpha \\ \beta \end{array} \right) \partial^\beta (\varphi_j^*) (\xi) 
 \partial^\gamma (g-P_N(g))(\xi) ~.
\end{equation}

By construction of the $\varphi_j$, we have $\varphi_j^* (\xi) = \varphi_0^*((A^T)^j \xi)$, for all $j < 0$. Choose $C_{\varphi,1} >0$ large enough such that ${\rm supp}(\varphi_0^*)$ is contained in the ball of radius $R$, then we get via Lemma \ref{lem:exp_norm}
\begin{equation} \label{eqn:supp_phi_est}
 \forall \xi \in {\rm supp}(\varphi_j^*) ~:~|\xi| \le C_{\varphi,1} \lambda_-^{j}~,
\end{equation}  where $\lambda_->1$ is a lower bound for the eigenvalues of $A$. 
Furthermore, the chain rule and the fact that $\varphi_j^*$ is a dilation of $\varphi_0^*$ allows to estimate
\begin{equation} \label{eqn:part_phi}
 \left| \partial^\alpha \varphi_j^*(\xi) \right| \le C_{\varphi,2} (1+\|h \|_\infty)^{|\alpha|}~.
\end{equation}

Given a multi-index $\gamma$ of order $\le d$, all partial derivatives of $\partial^\gamma (g-P_N g)$ of order less than $N-d$ vanish at zero. Hence 
Taylor's formula allows to estimate 
\begin{equation} \label{eqn:taylor}
 \left| \partial^\gamma (g-P_N g) (\xi) \right| \le C_d |\xi|^{N-d} \underbrace{\sum_{|\alpha| \le N} \| \partial^\alpha g \|_{\infty}}_{C_{N,g}}~,  
\end{equation}
for all $\xi \in \mathbb{R}^d$. 

We can now employ the collected estimates to show unconditional convergence of the right-hand side of (\ref{eqn:Phi2}). Combining (\ref{eqn:part_phi}) with (\ref{eqn:taylor}) gives
\begin{equation}
  \left| \partial^\beta (\varphi_j^*) (\xi)  \partial^\gamma (g-P_N(g))(\xi) \right| \le C_d C_{\varphi,2} (1+\|h \|_\infty)^{|\alpha|} C_{N,g} |\xi|^{N-d}~,
\end{equation}
and on the support of this pointwise product, we can employ (\ref{eqn:supp_phi_est}), to get finally 
\begin{equation}
 \label{eqn:final_1}
\left| \partial^\beta (\varphi_j^*) (\xi)  \partial^\gamma (g-P_N(g))(\xi) \right| \le  C_d C_{\varphi,1} C_{\varphi,2} C_{N,g} \lambda_-^{j(N-d)} ~.
\end{equation}
A further consequence of (\ref{eqn:supp_phi_est}), we have that all $\varphi_j^*$, for $j< 0$, are supported in the ball of radius $C_{\varphi,1}$, hence integrating (\ref{eqn:final_1}) yields
\begin{equation}
 \label{eqn:final_2}
\left\| \partial^\beta (\varphi_j^*)  \partial^\gamma (g-P_N(g))(\xi) \right\|_1 \le  C' C_{\varphi,1}^{d+1} C_{\varphi,2} C_{N,g} \lambda_-^{j(N-d)}
\end{equation}
Hence the triangle inequality applied to (\ref{eqn:leibniz}) yields that
\begin{equation} \label{eqn:final_3}
 \left\| \partial^\alpha \left[ \varphi_j^* \cdot (g-P_N g) \right] \right\| \le C'' C_{N,g} \lambda_-^{j(N-d)}~,
\end{equation}
with the constant $C''$ aggregating the constants $C_d,C_{\varphi,1},C_{\varphi,2}$, and the coefficients entering in the sum (\ref{eqn:leibniz}); observe that the latter are independent of $j$ and $N$. 

This yields, for all $j < 0$,
\begin{eqnarray*}
  {\rm det}(A)|^{j/p} v_{\alpha,A}(j)^{-1}  \left\|   (\mathcal{F}(\varphi_j^* \cdot (g-P_N g))\right\|_1  & \le &  C' C_{N,g} |{\rm det}(A)|^{j(1/p-\alpha)} \lambda_-^{j(N-d)}
\end{eqnarray*}
and this expression can be uniformly bounded in $j$ as soon as $\lambda_-^{N-d} > |{\rm det}(A)|^{\alpha-1/p}$. 
But then we get by (\ref{eqn:zwischen}) that 
\[
 \sum_{j < 0} \left| f (\varphi_j (g-P_N g))\right| \le C''' C_{N,g}~,
\] which yields, firstly, that the sum defining $\Phi_2(f)(g)$ converges unconditionally for all Schwartz functions $g$, and secondly, that the linear map $g \mapsto \Phi_2(f)(g)$ is indeed a tempered distribution. 

Hence $\Phi(f) = \Phi_1(f) + \Phi_2(f)$ defines a tempered distribution, and the fact that $P_N g = 0$ for all $g \in C_c^\infty(\mathbb{R}^d \setminus \{ 0 \})$ yields that
$\Phi(f)(g) = f(g)$ for all $f \in \mathcal{D}(\mathcal{Q},{\rm L}^p,\ell^q_{v_{\alpha,A}})$. Now the same argument as for the inhomogeneous case allows to conclude from this that $\rho \circ \mathcal{F}$ is onto. 
\end{proof}

Thus the theory of decomposition spaces becomes available for the study of anisotropic Besov spaces, which puts the role of the induced coverings into focus.
The following lemmata transfer the comparison of induced coverings to the comparison of associated homogeneous quasi-norms. We begin with a characterization of weak equivalence for induced coverings. 
\begin{lemma} \label{lem:char_we1}
 Let $A$ and $B$ be two expansive matrices. 
\begin{enumerate}
 \item[(a)] The homogeneous coverings induced by $A$ and $B$ are weakly equivalent if and only if, for all $R>0$, one has 
    \begin{eqnarray} \label{eqn:ub1}
    \sup_{i \in \mathbb{Z}} & & \left| \left\{ j \in \mathbb{Z} : \| A^{-j} B^i \| \ge R^{-1} \mbox{ and } \| B^{-i} A^j \| \ge R^{-1} \right\} \right| < \infty~,\\
   \label{eqn:ub2} \sup_{j \in \mathbb{Z}} & & \left| \left\{ i \in \mathbb{Z} : \| A^{-j} B^i \| \ge R^{-1} \mbox{ and } \| B^{-i} A^j \| \ge R^{-1} \right\} \right| < \infty~,
 \end{eqnarray}
 \item[(b)] The inhomogeneous coverings induced by $A$ and $B$ are weakly equivalent if and only if, for all $R>0$, one has 
   \begin{eqnarray} \label{eqn:iub1}
    \sup_{i \in \mathbb{N}_0} & & \left| \left\{ j \in \mathbb{N}_0 : \| A^{-j} B^i \| \ge R^{-1} \mbox{ and } \| B^{-i} A^j \| \ge R^{-1} \right\} \right| < \infty~,\\
   \label{eqn:iub2} \sup_{j \in \mathbb{N}_0} & & \left| \left\{ i \in \mathbb{N}_0 : \| A^{-j} B^i \| \ge R^{-1} \mbox{ and } \| B^{-i} A^j \| \ge R^{-1} \right\} \right| < \infty~,
 \end{eqnarray}
\end{enumerate} 
\end{lemma}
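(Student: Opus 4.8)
The plan is to translate both combinatorial conditions into statements about the intersection pattern of the induced coverings, which becomes transparent once the coverings are chosen to be annuli of associated homogeneous quasi-norms. Concretely, I would fix a continuous $A$-homogeneous quasi-norm $\rho_A$ and a continuous $B$-homogeneous quasi-norm $\rho_B$, put $K_A=\{x:1\le\rho_A(x)\le|{\rm det}(A)|\}$, $K_B=\{x:1\le\rho_B(x)\le|{\rm det}(B)|\}$, and work throughout with the induced annular coverings $Q_j^A=A^jK_A$, $Q_i^B=B^iK_B$; since any two homogeneous coverings induced by the same matrix are equivalent, weak equivalence is invariant under replacing a covering by an equivalent one, and none of (\ref{eqn:ub1})--(\ref{eqn:iub2}) refers to a covering, this is no loss of generality. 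Write $0<a_\bullet\le b_\bullet$ for the bounds of the Euclidean norm on $K_\bullet$, $J_i=\{j:Q_j^A\cap Q_i^B\ne\emptyset\}$ for the intersection sets, and $S_i^{(R)}=\{j:\|A^{-j}B^i\|\ge R^{-1}\text{ and }\|B^{-i}A^j\|\ge R^{-1}\}$ for the sets in (\ref{eqn:ub1}); the ``$\sup_j$'' versions are symmetric.

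One implication is easy. If $x\in Q_j^A\cap Q_i^B$, then $A^{-j}x\in K_A$, $B^{-i}x\in K_B$, and $A^{-j}B^i(B^{-i}x)=A^{-j}x$, so $\|A^{-j}B^i\|\ge|A^{-j}x|/|B^{-i}x|\ge a_A/b_B$ and likewise $\|B^{-i}A^j\|\ge a_B/b_A$. Hence $J_i\subseteq S_i^{(R_0)}$ for all $i$ with $R_0=\max\{b_B/a_A,\,b_A/a_B\}$; if (\ref{eqn:ub1})--(\ref{eqn:ub2}) hold (for every $R$, in particular $R_0$), this gives $\sup_i|J_i|<\infty$ and, symmetrically, $\sup_j|I_j|<\infty$, i.e. weak equivalence.

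For the converse I would assume weak equivalence, fix $R>0$, and bound $|S_i^{(R)}|$ uniformly in $i$, in three steps. (i) $S_i^{(R)}$ is a \emph{finite interval}: Lemma \ref{lem:exp_norm} gives $\|A^{-j}B^i\|\le c\,\lambda_-^{-j}\|B^i\|$ for $j\ge0$ and $\|B^{-i}A^j\|\le c\,\lambda_-^{\,j}\|B^{-i}\|$ for $j\le0$ (with $\lambda_->1$ depending only on $A$), so the first condition bounds $j$ above and the second bounds $j$ below; let $[\alpha_i,\beta_i]$ be its smallest enclosing interval, with $\alpha_i,\beta_i\in S_i^{(R)}$. (ii) $J_i$ is \emph{also an interval}: for $d\ge2$ the annulus $Q_i^B$ is connected (the case $d=1$ is trivial, all expansive matrices being equivalent there), $\rho_A$ is continuous, so $\rho_A(Q_i^B)$ is a compact interval and $J_i=\{j:[\,|{\rm det}(A)|^{j},|{\rm det}(A)|^{j+1}\,]\cap\rho_A(Q_i^B)\ne\emptyset\}$ is an interval; weak equivalence gives $N:=\sup_i|J_i|<\infty$, and since $Q_i^B$ and $Q_{i+1}^B$ share a boundary sphere the nonempty intervals $J_i$ have consecutive ones overlapping, so $\{J_{i+l}:|l|\le L\}$ always lies in one interval of length $\le(2L+1)N$. (iii) At the \emph{endpoints} one norm is pinched: validity fails at $\beta_i+1$, while $\|A^{-(\beta_i+1)}B^i\|\ge\|A\|^{-1}\|A^{-\beta_i}B^i\|\ge\|A\|^{-1}R^{-1}$ is not small, so the failure must be $\|B^{-i}A^{\beta_i+1}\|<R^{-1}$, whence $R^{-1}\le\|B^{-i}A^{\beta_i}\|\le\|A^{-1}\|R^{-1}$; choosing a unit vector $w$ realizing this norm and setting $y=A^{\beta_i}w$, the vector $B^{-i}y$ lies in a fixed Euclidean annulus, so $y\in B^{i+l}K_B$ with $|l|\le L$, while $y=A^{\beta_i}w$ with $|w|=1$ gives $y\in A^{\beta_i+k}K_A$ with $|k|\le k_0$; hence $\beta_i+k\in J_{i+l}$. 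Treating $\alpha_i$ symmetrically (the failure at $\alpha_i-1$ being of the first condition) and combining with (ii) shows $\beta_i-\alpha_i$ is bounded in terms of $R$ only, so $\sup_i|S_i^{(R)}|<\infty$; exchanging the roles of $A,B$ and $i,j$ gives the remaining bound, proving (a).

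Part (b) follows by repeating this with $\mathcal{Q}_A^i,\mathcal{Q}_B^i$ and indices in $\mathbb{N}_0$: the low-pass cells $Q_0^i$ meet only finitely many cells of the other covering and perturb every estimate by a bounded amount, and the endpoint step at $\alpha_i$ is vacuous once $\alpha_i=0$. The main obstacle is step (iii): lower bounds on $\|A^{-j}B^i\|$ and $\|B^{-i}A^j\|$ do \emph{not} force $Q_j^A$ and $Q_i^B$ to intersect, since $A^{-j}B^i$ may have a hidden contracting direction, so $S_i^{(R)}$ cannot simply be identified with $J_i$; what rescues the argument is that at the boundary indices of $S_i^{(R)}$ one of the two norms is squeezed between two constant multiples of $R^{-1}$, which produces a genuine witness vector and hence a nearby honest intersection. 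The remaining ingredients — existence of continuous homogeneous quasi-norms, invariance of weak equivalence under equivalence of coverings, and the $\mathbb{N}_0$ bookkeeping — are routine.
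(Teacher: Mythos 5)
Your approach for the hard (``only if'') direction is genuinely different from the paper's. The paper shows that the two inequalities $\|A^{-j}B^i\|\ge R^{-1}$ and $\|B^{-i}A^j\|\ge R^{-1}$ \emph{by themselves} already force an intersection $A^jC_R\cap B^iC_R\ne\emptyset$, via an intermediate value theorem argument on the unit sphere: after disposing of the easy cases, one joins a unit vector $z_1$ with $|A^{-j}B^iz_1|\ge R$ to the unit vector $z_2=B^{-i}A^jy/|B^{-i}A^jy|$ and finds a point of the sphere whose image under $A^{-j}B^i$ again lies in the annulus. Thus $S_i^{(R)}$ is literally contained in the intersection set $I_j$ for the covering built from $C_R$, and the weak-equivalence bound finishes the argument. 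You instead use the connectivity of the annuli (step (ii)) and a pinching argument at the endpoints (step (iii)); you correctly observe that lower bounds on the two operator norms do \emph{not} produce an intersection at a generic $j\in S_i^{(R)}$ — that is exactly what the IVT in the paper supplies, and what you replace by the endpoint analysis.

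Step (iii) as written contains a real gap. From $\|A^{-(\beta_i+1)}B^i\|\ge\|A\|^{-1}\|A^{-\beta_i}B^i\|\ge\|A\|^{-1}R^{-1}$ you conclude that the first membership condition cannot fail at $\beta_i+1$, so the failure ``must be'' $\|B^{-i}A^{\beta_i+1}\|<R^{-1}$. But $\|A\|^{-1}R^{-1}<R^{-1}$, so your estimate does not rule out $\|A^{-(\beta_i+1)}B^i\|<R^{-1}$; you cannot determine which of the two conditions fails, and the same problem occurs in the analogous claim at $\alpha_i-1$. Fortunately the argument does not actually need this determination: whichever of the two norms drops below $R^{-1}$ when moving from $\beta_i$ to $\beta_i+1$ is squeezed, at $\beta_i$, between $R^{-1}$ and $\max(\|A\|,\|A^{-1}\|)\,R^{-1}$. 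If $\|B^{-i}A^{\beta_i}\|$ is the pinched one, take $y=A^{\beta_i}w$ with $w$ a unit vector realizing that norm, so $|A^{-\beta_i}y|=1$ and $|B^{-i}y|$ lies in a fixed Euclidean annulus; if $\|A^{-\beta_i}B^i\|$ is pinched, take $y=B^iw$, so $|B^{-i}y|=1$ and $|A^{-\beta_i}y|$ lies in a fixed annulus. Either way $y\in Q_{\beta_i+k}^A\cap Q_{i+l}^B$ for bounded $|k|,|l|$, which is all you use afterwards. With this two-case repair (and the analogous one at $\alpha_i$), the combination with step (ii) bounds $\beta_i-\alpha_i$ as intended, and the rest of your proposal — the easy direction, the reduction to one pair of coverings via invariance of weak equivalence under equivalence, and the $\mathbb{N}_0$ adaptation for (b) — is sound.
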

\begin{proof}
For the proof of $(a)$, first assume that (\ref{eqn:ub1}) and (\ref{eqn:ub2}) hold for all $R>0$. 
 Fix $S>0$ large enough, so that $\bigcup_{j \in \mathbb{Z}} A^j C_S = \bigcup_{i \in \mathbb{Z}} B^i C_S = \mathbb{R}^d \setminus \{ 0 \}$. It is then sufficient to show that the coverings
 $(A^j C_S)_{j \in \mathbb{Z}}$ and $(B^i C_S)_{i \in \mathbb{Z}}$ are weakly equivalent. Let $K$ denote a finite upper bound for the suprema in (b), for $R=S^2$. Given $i,j \in \mathbb{Z}$, one then has that
 $i \in I_j$ if and only if $A^j C_S \cap B^i C_S \not= \emptyset$, or equivalently, if and only if $B^{-i} A^j C_S \cap C_S \not= \emptyset$. This implies the existence of $x \in C_S$ such that $B^{-i} A^j x \in C_S$. 
 Hence 
 \[
  \| A^{-j} B^i \| \ge \frac{ | x | }{| B^{-i} A^j x |} \ge \frac{1}{S^2}~. 
 \] But the choice of $K$ yields that for any given $j \in \mathbb{Z}$, there are at most $K$ indices $I$ with $\| A^{-j} B^i \| \ge 1/S^2$, and thus we get 
 \[
  \sup_{j \in \mathbb{Z}} |I_j| \le K~.
 \] By symmetry, we obtain the second inequality, hence the coverings are weakly equivalent.
 
 Now assume that $(A^j C_R)_{j \in \mathbb{Z}}$ and $(B^i C_R)_{i \in \mathbb{Z}}$ are weakly equivalent. Fix $j \in \mathbb{Z}$, and assume that $\| B^{-i} A^j \| \ge R^{-1}$ as well as $\| A^{-j} B^i \| \ge R^{-1}$ hold for some $i \in \mathbb{Z}$.  We aim to show that $i \in I_j$, then the upper bound on $|I_j|$ provided by the assumption of weak equivalence yields (\ref{eqn:ub1}).
 
 The first inequality yields $z_1 \in \mathbb{R}^n$ with $|z_1| = 1$ and $|A^{-j} B^i z_1| \ge R^{-1}$. Now if $|A^{-j} B^i z_1| < R$, then we have $A^{-j} B^i z_1 \in A^{-j} B^i C_R \cap C_R$, and thus $i \in I_j$. 
 
 Hence assume that $|A^{-j} B^i z_1| \ge R$. We use the inequality $\| A^{-j} B^i \| \ge R^{-1}$ to conclude the existence of $y \in \mathbb{R}^d$ with $|y| =1$ and $|B^{-i} A^j y| >R^{-1}$. If $|B^{-i} A^j y| < R$, we find $B^{-i} A^j y \in C_R \cap B^{-i} A^j C_R$, and thus $i \in I_j$. 
 
 Hence the remaining case is $|y| = |z_1| = 1$ with $|B^{-i} A^j y|\ge R$ and $|A^{-j} B^i z_1|\ge R$. Define $z_2 = \frac{1}{|B^{-i} A^j y|}B^{-i} A^j y$, hence $|z_2| = 1$. Pick a continuous curve $\varphi :[0,1] \to \{ x \in \mathbb{R}^n : |x| = 1 \}$ with $\varphi(0) = z_1$, $\varphi(1) = z_2$, then the function
 \[
  \widetilde{\varphi}: [0,1] \to \mathbb{R}^+~,t \mapsto |A^{-j} B^i \varphi(t)|
  \] is continuous, with 
  \[
   \widetilde{\varphi}(0) = |A^{-j} B^i z_1| \ge R~,~\widetilde{\varphi}(1) = \frac{|A^{-j} B^iB^{-i} A^j y|}{|B^{-i} A^j y|} \le R~.
  \] Hence the intermediate value theorem yields $t$ with $\widetilde{\varphi}(t) = 1$. It follows that  $\varphi(t) \in C_R$ as well as $ A^{-j} B^i \varphi(t) \in C_R$, and thus
  $A^j C_R \cap B^i C_R \not= \emptyset$. Hence this case also leads to $i \in I_j$. 
  
  Hence the uniform upper bound for all $|I_j|$ yields (\ref{eqn:ub1}). In the same way, we obtain (\ref{eqn:ub2}) from an upper bound on all $|J_i|,~i \in \mathbb{Z}$. 
  
  The inhomogeneous case (b) follows entirely analogously. 
\end{proof}

\begin{lemma} \label{lem:char_equiv_matr}
 Let $A$ and $B$ be two expansive matrices, and $\mathcal{Q}$ and $\mathcal{P}$ coverings induced by $A$ and $B$, respectively. Let $\rho_{A}$ denote an $A$-homogeneous quasi-norm and $\rho_{B}$ a $B$-homogeneous quasi-norm. Then the following are equivalent:
 \begin{enumerate}
  \item[(a)] The homogeneous coverings $\mathcal{Q}$ and $\mathcal{P}$ are weakly equivalent. 
  \item[(b)] The homogeneous coverings $\mathcal{Q}$ and $\mathcal{P}$ are equivalent.
  \item[(c)] The quasi-norms $\rho_{A}$ and $\rho_{B}$ are equivalent. 
 \end{enumerate}
\end{lemma}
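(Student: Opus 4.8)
The plan is to prove the cycle of implications $(c)\Rightarrow(b)\Rightarrow(a)\Rightarrow(c)$. Throughout I write the two coverings as $\mathcal{Q}=(A^j\overline{C})_{j\in\mathbb{Z}}$ and $\mathcal{P}=(B^i\overline{D})_{i\in\mathbb{Z}}$, and I record the elementary ``dictionary'' that will do most of the work: since $\overline{C},\overline{D}$ are compact in $\mathbb{R}^d\setminus\{0\}$ and $\rho_A,\rho_B$ are bounded above and bounded below by positive constants on such sets, there are $0<a_1\le a_2$ and $0<b_1\le b_2$ with $\rho_A(A^j\overline{C})\subset[a_1|\det A|^j,a_2|\det A|^j]$ and $\rho_B(B^i\overline{D})\subset[b_1|\det B|^i,b_2|\det B|^i]$ for all $i,j$, with ratios $a_2/a_1$, $b_2/b_1$ independent of $i,j$. (One may assume $d\ge 2$; for $d=1$ all expansive matrices are equivalent by Lemma~\ref{lem:norm_equiv}, and all three conditions hold trivially.)

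For $(c)\Rightarrow(b)$, assume $\rho_A\asymp\rho_B$, say $\rho_A/c\le\rho_B\le c\rho_A$. Then $A^j\overline{C}$ sits inside the ``$\rho_B$-shell'' $\{x: a_1|\det A|^j/c\le\rho_B(x)\le ca_2|\det A|^j\}$; comparing with the $\rho_B$-ranges of the $B^m\overline{D}$ shows that $A^j\overline{C}$ can meet $B^m\overline{D}$ only for $m$ in an interval $[\mu(j),\mu(j)+L]$ with $L$ independent of $j$, and since $\bigcup_m B^m\overline{D}=\mathbb{R}^d\setminus\{0\}$ this forces $A^j\overline{C}\subset\bigcup_{m=\mu(j)}^{\mu(j)+L}B^m\overline{D}$. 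To upgrade this finite union to a fixed neighbourhood $P_{\mu(j)}^{k*}$, I use — exactly as in the earlier lemma stating that coverings induced by the same matrix are equivalent, which rests on connectedness of $\mathbb{R}^d\setminus\{0\}$ — that the overlap graph of $\mathcal{P}$ on $\mathbb{Z}$ is connected; by translation invariance $m$ and $m+1$ are joined by a path of a fixed length $k_1$, so $k=k_1L$ works for every $j$. Hence $\mathcal{Q}$ is almost subordinate to $\mathcal{P}$, and symmetrically conversely, giving $(b)$. The step $(b)\Rightarrow(a)$ is then routine: if $\mathcal{Q}$ is almost subordinate to $\mathcal{P}$ with constant $k$, each $A^j\overline{C}$ lies in a union of $B^m\overline{D}$ with $m$ within $k$ overlap-steps of one index, which by the index-gap estimate from the proof of Lemma~\ref{lem:ind_cv_adm} lies in a $\rho_B$-shell of bounded logarithmic width, hence meets only boundedly many $B^m\overline{D}$; this gives $\sup_j|J_j|<\infty$, and symmetrically $\sup_i|I_i|<\infty$, i.e. weak equivalence.

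The substance of the lemma is $(a)\Rightarrow(c)$, which I would prove by contraposition via Lemma~\ref{lem:char_we1}(a) and Lemma~\ref{lem:norm_equiv}. If $\rho_A,\rho_B$ are not equivalent, Lemma~\ref{lem:norm_equiv} produces $k_n$ with $|k_n|\to\infty$ and $\|M_{k_n}\|\to\infty$, where $M_k=A^{-k}B^{\lfloor\epsilon k\rfloor}$ and $\epsilon=\epsilon(A,B)$. Put $i_n=\lfloor\epsilon k_n\rfloor$ and, for $m\ge 0$, $j=k_n+m$, so $A^{-j}B^{i_n}=A^{-m}M_{k_n}$. By Lemma~\ref{lem:exp_norm}, $\|A^{-j}B^{i_n}\|\ge\|M_{k_n}\|/\|A^m\|\ge\|M_{k_n}\|/(c\lambda_+^{m})$, hence $\|A^{-j}B^{i_n}\|\ge R^{-1}$ whenever $\lambda_+^m\le R\|M_{k_n}\|/c$; on the other hand $\|B^{-i_n}A^j\|=\|(A^{-m}M_{k_n})^{-1}\|\ge\|M_{k_n}^{-1}\|/\|A^{-m}\|\ge\|M_{k_n}^{-1}\|/c\ge 1/c$, where I used Lemma~\ref{lem:exp_norm}, $m\ge 0$, and that $\lfloor\epsilon k_n\rfloor\le\epsilon k_n$ forces $|\det M_{k_n}|\le|\det A|^{-k_n}|\det B|^{\epsilon k_n}=1$, hence $\|M_{k_n}^{-1}\|\ge 1$. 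Fixing $R=\max(1,c)$ (depending only on $A$), all $j\in\{k_n,\dots,k_n+\lfloor\log_{\lambda_+}(R\|M_{k_n}\|/c)\rfloor\}$ then lie in the set appearing in (\ref{eqn:ub1}) for this $R$ and $i=i_n$; as the number of such $j$ tends to infinity, (\ref{eqn:ub1}) fails, so by Lemma~\ref{lem:char_we1}(a) $\mathcal{Q}$ and $\mathcal{P}$ are not weakly equivalent. This closes the cycle.

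The main obstacle is precisely this last step: turning the purely combinatorial bound on overlap multiplicities, condition (\ref{eqn:ub1}), back into the operator-norm estimate $\sup_k\|A^{-k}B^{\lfloor\epsilon k\rfloor}\|<\infty$. The quantitative idea that makes it go is that a single badly behaved product $M_{k_n}$ of large norm, together with the $\lambda_+$-controlled growth of the powers of $A$, forces an entire block of about $\log\|M_{k_n}\|$ consecutive scales $j$ to overlap the single annulus $B^{i_n}\overline{D}$ — so unboundedness of the norms is incompatible with weak equivalence. Everything else ($(c)\Rightarrow(b)\Rightarrow(a)$) is the expected geometry of annuli of two homogeneous quasi-norms, modulo the connectedness input cited above.
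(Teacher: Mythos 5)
Your proof is correct and follows essentially the same route as the paper: $(c)\Rightarrow(b)$ via the shell/annulus picture and connectedness of the overlap graph, $(b)\Rightarrow(a)$ routine, and the substantive step $(a)\Rightarrow(c)$ by contraposition through Lemma~\ref{lem:char_we1} and Lemma~\ref{lem:norm_equiv}. The only genuine (and harmless) deviation is in $(a)\Rightarrow(c)$: the paper perturbs the $B$-index with $j=-k_n$ fixed, producing a violation of (\ref{eqn:ub2}) and invoking Lemma~\ref{lem:norm_est} to push $\|M_{k_n}^{-1}\|\to\infty$, whereas you perturb the $A$-index with $i=i_n$ fixed, violating (\ref{eqn:ub1}) instead, and replace the appeal to Lemma~\ref{lem:norm_est} by the direct singular-value bound $\|M_{k_n}^{-1}\|\ge 1$ coming from $|\det M_{k_n}|\le 1$ — a slightly lighter-weight estimate that suffices because you only need the second inequality in (\ref{eqn:ub1}) to hold at a fixed threshold rather than to diverge.
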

\begin{proof}
For the proof of $(a) \Rightarrow (c)$, we show the contraposition. Hence assume that the quasi-norms $\rho_{A}$ and $\rho_B$ are not equivalent. Then Lemma \ref{lem:norm_equiv} (a) $\Leftrightarrow (c)$ yields a sequence $(k_n)_{n \in \mathbb{Z}}$ of $k_n$ such that 
\[
  \left\| A^{k_n} B^{-\lfloor \epsilon k_n \rfloor} \right\| \to \infty~.
\] as $n \to \infty$. 
By the choice of $\epsilon$, we have 
\begin{eqnarray*}
 |{\rm det}(B^{\lfloor \epsilon k \rfloor} A^{-k})| & = & |{\rm det}(A)|^{-k} |{\rm det}(B)|^{\epsilon k} |{\rm det}(B)|^{-\epsilon k + \lfloor \epsilon k \rfloor}  \\
  & = & |{\rm det}(B)|^{-\epsilon k + \lfloor \epsilon k \rfloor} \ge |{\rm det}(B)|^{-1} ~,
\end{eqnarray*} for all $k \in \mathbb{Z}$.
Hence Lemma \ref{lem:norm_est} implies that 
\[
  \left\|  B^{\lfloor \epsilon k_n \rfloor} A^{-k_n}\right\| \to \infty~
\] as well. 

Now fix $R>1$, $m \in \mathbb{N}$, and pick $k_n$ with 
\[
 \left\| A^{k_n} B^{-\lfloor \epsilon k_n \rfloor} \right\| \ge R^{-1} \max(\|B \|,\|B^{-1} \|)^m~
\]
as well as 
\[
  \left\|  B^{\lfloor \epsilon k_n \rfloor} A^{-k_n}\right\| \ge R^{-1} \max(\|B \|,\|B^{-1} \|)^m~.
\]
Using the norm estimate $\| S T \| \le \| S \| \| T \|$ for arbitrary matrices $S,T$ then gives for $i=0,\ldots, m$ that 
\[
 \left\| A^{k_n}  B^{i-\lfloor \epsilon k_n \rfloor} \right\| \ge \| A^{k_n}  B^{-\lfloor \epsilon k_n \rfloor} \| \| B \|^{-i} \ge R^{-1}
\]
as well as 
\[
 \left\|  B^{\lfloor \epsilon k_n \rfloor-i} A^{-k_n}\right\| \ge \| A^{k_n}  B^{-\lfloor \epsilon k_n \rfloor} \| \| B^{-1} \|^{-i} \ge R^{-1}~. 
\]
But this means that condition (\ref{eqn:ub2}) is violated, and thus the induced coverings are not weakly equivalent. 

Now assume that $\rho_A$ and $\rho_B$ are equivalent. Let $\mathcal{Q}$ denote a covering induced by $A$. 
Define
\[ 
 \mathbf{B}^A_r(0) = \{ x  \in \mathbb{R}^d~:~ \rho_A(x) < r \}~,
\] the ball with respect to $\rho_A$ with center $0$ and radius $r$. 

The fact that $Q_0$ has compact closure in  $\mathbb{R}^d \setminus \{ 0 \}$ then yields $Q_0 \subset B^A_R(0) \setminus B^A_{R^{-1}}(0)$. By construction of the covering on the one hand, and $A$-homogeneity of $\rho_A$ on the other, this entails
\[
 Q_j = A^j Q_0  \subset \mathbf{B}^A_{R |{\rm det}(A)|^{j}}(0) \setminus \mathbf{B}^A_{R^{-1}  |{\rm det}(A)|^{j}}(0)~.
\]

By analogous reasoning, we get (possibly after increasing $R$) that also 
\[
 P_i \subset \mathbf{B}^B_{R |{\rm det}(B)|^{i}}(0) \setminus \mathbf{B}^B_{R^{-1}  |{\rm det}(B)|^{i}}(0)~,
\] with $\mathbf{B}^B_R(0)$ denoting balls with respect to $\rho_B$.

By assumption, there exists $c\ge 1$ such that
\[
 \frac{1}{c} \rho_A(x) \le \rho_B(x) \le c \rho_A(x)~.
\]

Now let $i, j \in \mathbb{Z}$ with 
\begin{eqnarray*}
\emptyset & \not= &  Q_j \cap P_i  \\ 
 & \subset &   \left(  \mathbf{B}^A_{R |{\rm det}(A)|^{j}}(0) \setminus \mathbf{B}^A_{R^{-1}  |{\rm det}(A)|^{j}}(0)\right)  \cap 
 \left( \mathbf{B}^B_{R |{\rm det}(B)|^{i}}(0) \setminus \mathbf{B}^B_{R^{-1}  |{\rm det}(B)|^{i}}(0) \right)~.
\end{eqnarray*}
For any $x$ contained in this intersection, one obtains in particular that
\[
 R^{-1} |{\rm det}(A)|^j \le \rho_A(x) \le c \rho_B(x) \le c R |{\rm det}(B)|^i ~,
\]
which leads to 
\[
 \frac{|{\rm det}(A)|^j}{|{\rm det}(B)|^i} \le c R^2~.
\] But analogous reasoning also yields
\[
 \frac{|{\rm det}(B)|^i}{|{\rm det}(A)|^j} \le c R^2~.
\] 
Using $\epsilon = \frac{\ln(|{\rm det}(A)|)}{\ln (|{\rm det}(B)|)}$ as in Lemma \ref{lem:norm_equiv}, the two equations yield 
\[ |j \epsilon - i| \le \frac{\ln(c R^2)}{\ln(|\det(B)|)} ~.\]
Thus, with $i_0 = \lfloor j \epsilon \rfloor$ and $K = \lceil \frac{\ln(c R^2)}{\ln(|{\rm det}(B))|} \rceil +1$, we get
\begin{eqnarray*}
 Q_j & = & A^j Q_0 \subset \bigcup_{\ell=i_0-K}^{i_0+K} \mathbf{B}_{R|{\rm det}(B)|^\ell}(0)\setminus \mathbf{B}_{R^{-1}|{\rm det}(B)|^\ell}(0)  \\
 & = & B^{i_0} \left( \bigcup_{\ell = -K}^K   \mathbf{B}_{R|{\rm det}(B)|^\ell}(0)\setminus \mathbf{B}_{R^{-1}|{\rm det}(B)|^\ell}(0)   \right)~. 
\end{eqnarray*}
Since 
\[
\overline{ \left( \bigcup_{\ell = -K}^K  \mathbf{B}_{R|{\rm det}(B)|^\ell}(0)\setminus \mathbf{B}_{R^{-1}|{\rm det}(B)|^\ell}(0)   \right)} \subset \mathbb{R}^d \setminus \{ 0 \}
\] is compact, there exists $k \in \mathbb{N}$ such that
\[
\left( \bigcup_{\ell = -K}^K  \mathbf{B}_{R^{-1}|{\rm det}(B)|^\ell}(0) \right) \subset P_0^{k*}
\] and therefore
\[
 Q_j \subset B^{i_0} P_0^{k*} = P_{i_0}^{k*}~.
\]
Exchanging the roles of $Q_j$ and $P_i$ yields a converse inclusion relation, and we have shown 
equivalence of the induced coverings. 

Finally, $(b) \Rightarrow (a)$ is trivial. 
\end{proof}

The following is an analogy for the inhomogeneous setting. The proof is straightforward adaptation of the previous one, and therefore omitted.  
\begin{lemma} \label{lem:char_equiv_matr_ih}
 Let $A$ and $B$ be two expansive matrices, and $\mathcal{Q}$ and $\mathcal{P}$ inhomogeneous coverings induced by $A$ and $B$, respectively. Let $\rho_{A}$ denote an $A$-homogeneous quasi-norm and $\rho_{B}$ a $B$-homogeneous quasi-norm. Then the following are equivalent:
 \begin{enumerate}
  \item[(a)] The inhomogeneous coverings $\mathcal{Q}$ and $\mathcal{P}$ are weakly equivalent. 
  \item[(b)] The inhomogeneous coverings $\mathcal{Q}$ and $\mathcal{P}$ are equivalent.
  \item[(c)] The quasi-norms $\rho_{A}$ and $\rho_{B}$ are coarsely equivalent. 
 \end{enumerate}
\end{lemma}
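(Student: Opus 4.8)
The plan is to mirror the proof of Lemma~\ref{lem:char_equiv_matr} closely, replacing ``equivalent'' by ``coarsely equivalent'' and $\mathbb{Z}$ by $\mathbb{N}_0$ throughout; the one genuinely new point is that the low-pass members $Q_0^i,P_0^i$ of the two coverings must be treated separately. By Lemma~\ref{lem:ind_cv_adm} the inhomogeneous coverings $\mathcal{Q}=(Q_j^i)_{j\in\mathbb{N}_0}$ and $\mathcal{P}=(P_i^i)_{i\in\mathbb{N}_0}$ are almost structured admissible coverings, so that Lemma~\ref{lem:char_we1}(b) is applicable, and, exactly as in Lemma~\ref{lem:char_equiv_matr}, the implication $(b)\Rightarrow(a)$ is immediate (equivalent almost structured admissible coverings are weakly equivalent). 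I would establish the cycle $(a)\Rightarrow(c)\Rightarrow(b)\Rightarrow(a)$.

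For $(a)\Rightarrow(c)$ I argue by contraposition. If $\rho_A$ and $\rho_B$ are not coarsely equivalent, then $\sup_{k\in\mathbb{N}}\|A^{-k}B^{\lfloor\epsilon k\rfloor}\|=\infty$ by Lemma~\ref{lem:char_cequiv_matr1} (with $\epsilon=\epsilon(A,B)$), so there is a sequence $k_n\to\infty$ with $\|A^{-k_n}B^{\lfloor\epsilon k_n\rfloor}\|\to\infty$; the determinant bound (\ref{eqn:dets_equiv}) together with Lemma~\ref{lem:norm_est} then yields $\|B^{-\lfloor\epsilon k_n\rfloor}A^{k_n}\|\to\infty$ as well. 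Fix $R>0$ and $m\in\mathbb{N}$ and take $n$ so large that both norms exceed $R^{-1}\max(\|B\|,\|B^{-1}\|)^m$ and $\lfloor\epsilon k_n\rfloor\ge m$. Submultiplicativity of the operator norm then gives, for every $\ell\in\{0,\ldots,m\}$, both $\|A^{-k_n}B^{\lfloor\epsilon k_n\rfloor-\ell}\|\ge R^{-1}$ and $\|B^{-(\lfloor\epsilon k_n\rfloor-\ell)}A^{k_n}\|\ge R^{-1}$. Hence, with the fixed index $j=k_n\in\mathbb{N}_0$, the $m+1$ distinct indices $i=\lfloor\epsilon k_n\rfloor-\ell\in\mathbb{N}_0$ all satisfy $\|A^{-j}B^i\|\ge R^{-1}$ and $\|B^{-i}A^j\|\ge R^{-1}$; since $m$ is arbitrary this violates (\ref{eqn:iub2}), so the inhomogeneous coverings are not weakly equivalent by Lemma~\ref{lem:char_we1}(b).

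For $(c)\Rightarrow(b)$ I adapt the annulus argument from the proof of Lemma~\ref{lem:char_equiv_matr}. Write $Q_j^i=A^jG$ for $j\ge1$ with $G$ relatively compact in $\mathbb{R}^d\setminus\{0\}$, and likewise $P_i^i=B^iH$ for $i\ge1$. Assume $\rho_A,\rho_B$ coarsely equivalent; by Remark~\ref{rem:coarse_equiv}(b) there are $c\ge 1$, $R_0>0$ with $c^{-1}\rho_A(x)\le\rho_B(x)\le c\,\rho_A(x)$ for all $|x|\ge R_0$. Pick $S>1$ with $G\subset\{x:S^{-1}\le\rho_A(x)\le S\}$ and $H\subset\{x:S^{-1}\le\rho_B(x)\le S\}$; by homogeneity, $Q_j^i\subset\{x:S^{-1}|\det A|^j\le\rho_A(x)\le S|\det A|^j\}$ and $P_i^i\subset\{x:S^{-1}|\det B|^i\le\rho_B(x)\le S|\det B|^i\}$ for $j,i\ge1$. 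By Lemma~\ref{lem:exp_norm}, $\inf_{x\in Q_j^i}|x|\to\infty$, so there is $j^*$ with $Q_j^i\subset\{x:|x|\ge R_0\}$ and $Q_j^i\cap P_0^i=\emptyset$ for all $j>j^*$ (the latter since $P_0^i$ is bounded). For such $j$ the homogeneous computation carries over verbatim: any $x\in Q_j^i\cap P_i^i$ (necessarily $i\ge1$) has comparable $\rho_A(x)$ and $\rho_B(x)$, which forces $|\epsilon j-i|\le K_0$ with $K_0$ depending only on $c,S,|\det B|$ and $\epsilon=\epsilon(A,B)$; writing $i_0=\lfloor\epsilon j\rfloor$ and fixing $K=\lceil K_0\rceil+1$ gives $Q_j^i\subset B^{i_0}L$ for the fixed compact set $L=\bigcup_{|\ell|\le K}\{x:S^{-1}|\det B|^\ell\le\rho_B(x)\le S|\det B|^\ell\}\subset\mathbb{R}^d\setminus\{0\}$, and since $L$ is covered by boundedly many dilates $B^\ell H$ one obtains $Q_j^i\subset(P_{i_0}^i)^{k*}$ for a $k$ independent of $j$ (at these large levels the inhomogeneous covering induced by $B$ coincides with its homogeneous counterpart, so dilation by $B^{i_0}$ carries neighbourhoods to neighbourhoods). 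The finitely many remaining sets $Q_0^i,Q_1^i,\ldots,Q_{j^*}^i$ are compact in $\mathbb{R}^d$, hence all contained in $(P_0^i)^{k'*}$ for $k'$ large, because the interiors of the sets $(P_0^i)^{n*}$, $n\in\mathbb{N}$, form an increasing open exhaustion of $\mathbb{R}^d$ (connectedness of $\mathbb{R}^d$ together with $0\in P_0^i$). Taking $\max(k,k')$ shows $\mathcal{Q}$ is almost subordinate to $\mathcal{P}$; by the symmetry of coarse equivalence in $A$ and $B$ the reverse subordination follows, so $\mathcal{Q}$ and $\mathcal{P}$ are equivalent.

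The main obstacle is the bookkeeping in step $(c)\Rightarrow(b)$: coarse equivalence only provides comparability of $\rho_A,\rho_B$ outside a bounded neighbourhood of the origin, so the index range must be split into a ``high'' part $j>j^*$, where the homogeneous argument transfers without change, and a ``low'' part (together with the low-pass overlaps), which has to be absorbed by a compactness argument; one then has to check that the resulting subordination constant can be chosen uniformly in $j$. Everything else is a routine transcription of the homogeneous proof.
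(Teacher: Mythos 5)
Your proposal is correct and carries out exactly the adaptation the paper has in mind (the paper omits the proof with the remark that it is a straightforward adaptation of Lemma~\ref{lem:char_equiv_matr}). You have also correctly identified the one genuinely new ingredient, namely the separate treatment of the low-pass members $Q_0^i,P_0^i$ and the resulting high/low split with the compactness argument absorbing the finitely many low indices into $(P_0^i)^{k'*}$; the rest is indeed the verbatim transcription using Lemma~\ref{lem:char_cequiv_matr1} in place of Lemma~\ref{lem:norm_equiv} and Lemma~\ref{lem:char_we1}(b) in place of \ref{lem:char_we1}(a).
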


We can now transfer these findings to the level of Besov spaces. First a rigidity theorem: Two matrices are Besov-equivalent if and only if the associated scale of Besov spaces coincide {\em in one nontrivial instance}:

\begin{theorem} \label{thm:rigidity_besov}
Let $A,B$ denote expansive matrices.
\begin{enumerate}
 \item[(a)] $A \sim_{\dot{B}} B$ holds if and only if there exists a tuple $(p,q) \not= (2,2,)$ and $\alpha \in \mathbb{R}$ such that 
 \[
  \dot{B}_{p,q}^{\alpha}(A) = \dot{B}_{p,q}^{\alpha}(B)~.
 \]
 \item[(b)] $A \sim_{{B}} B$ holds if and only if there exists a tuple $(p,q) \not= (2,2)$ and $\alpha \in \mathbb{R}$ such that 
 \[
  {B}_{p,q}^{\alpha}(A) = {B}_{p,q}^{\alpha}(B)~.
 \]
\end{enumerate}
\end{theorem}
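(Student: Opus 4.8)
The two parts have the same structure, so I describe the homogeneous case (a) and indicate the parallel modifications for (b). One implication is immediate from the definition of $\sim_{\dot{B}}$ (resp.\ $\sim_{B}$): if $A \sim_{\dot{B}} B$ then in particular $\dot{B}^{0}_{1,1}(A) = \dot{B}^{0}_{1,1}(B)$, and $(1,1) \neq (2,2)$. The content is the converse: starting from equality of the Besov spaces at a single instance $(p,q) \neq (2,2)$, $\alpha \in \RR$, one must deduce equality at every instance, i.e.\ $A \sim_{\dot{B}} B$.

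First I would transport the hypothesis to the decomposition-space side. By Theorem \ref{thm:besov_as_decsp}, the restriction of $\mathcal{F}$ is a topological isomorphism $\dot{B}^{\alpha}_{p,q}(A) \to \mathcal{D}(\mathcal{Q}_A,{\rm L}^p,\ell^q_{v_{\alpha,A}})$, where $\mathcal{Q}_A$ is a homogeneous covering induced by $A^T$, and likewise for $B$. Since this isomorphism is literally the (restriction of the) Fourier transform, hence one and the same injective map independent of the matrix, the assumption $\dot{B}^{\alpha}_{p,q}(A) = \dot{B}^{\alpha}_{p,q}(B)$ forces
\[ \mathcal{D}(\mathcal{Q}_A,{\rm L}^p,\ell^q_{v_{\alpha,A}}) = \mathcal{D}(\mathcal{Q}_B,{\rm L}^p,\ell^q_{v_{\alpha,B}}) \]
as subspaces of $\mathcal{D}'(\RR^d \setminus \{0\})$. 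Now $(p,q,p,q) \neq (2,2,2,2)$, so the rigidity theorem (Theorem \ref{thm:rigidity}) applies and gives that $\mathcal{Q}_A$ and $\mathcal{Q}_B$ are weakly equivalent (and $v_{\alpha,A} \asymp v_{\alpha,B}$). Applying Lemma \ref{lem:char_equiv_matr} with $A^T,B^T$ in place of $A,B$, weak equivalence of these homogeneous induced coverings is the same as their equivalence and the same as equivalence of the homogeneous quasi-norms $\rho_{A^T}$ and $\rho_{B^T}$. For part (b) one argues identically, using instead the inhomogeneous half of Theorem \ref{thm:besov_as_decsp}, Theorem \ref{thm:rigidity}, and Lemma \ref{lem:char_equiv_matr_ih}, and lands on \emph{coarse} equivalence of $\rho_{A^T}$ and $\rho_{B^T}$.

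It remains to run the argument backwards, now for every choice of $(p,q,\alpha)$. By Lemma \ref{lem:char_equiv_matr} (resp.\ Lemma \ref{lem:char_equiv_matr_ih}), equivalence (resp.\ coarse equivalence) of $\rho_{A^T}$ and $\rho_{B^T}$ makes $\mathcal{Q}_A$ and $\mathcal{Q}_B$ equivalent induced coverings, hence weakly equivalent and almost subordinate to each other. To feed this into Lemma \ref{lem:suf_dc_equal} I would reuse the annulus estimate appearing in the proof of Lemma \ref{lem:char_equiv_matr}: if $Q_j \cap P_i \neq \emptyset$ then $|\epsilon j - i| \le K$ for a fixed $K$, with $\epsilon = \epsilon(A,B) = \epsilon(A^T,B^T)$. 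Writing $(A^T)^{-j}(B^T)^i = \big[(A^T)^{-j}(B^T)^{\lfloor \epsilon j \rfloor}\big](B^T)^{\,i-\lfloor \epsilon j \rfloor}$ and combining $\sup_k \|(A^T)^{-k}(B^T)^{\lfloor \epsilon k \rfloor}\| < \infty$ (over $k \in \ZZ$ by Lemma \ref{lem:norm_equiv}; over $k \in \NN$ by Lemma \ref{lem:char_cequiv_matr1} in the inhomogeneous case) with the two-sided control on $|{\rm det}((A^T)^{-j}(B^T)^i)| = |{\rm det}(A)|^{-j}|{\rm det}(B)|^i$ furnished by $|\epsilon j - i| \le K$, and using Lemma \ref{lem:norm_est} to bound the inverse, one obtains a uniform bound on $\|T_j^{-1}S_i\| + \|S_i^{-1}T_j\|$ over overlapping cells, where $T_j = (A^T)^j$ and $S_i = (B^T)^i$ are the standardization maps. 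The same estimate, together with $|{\rm det}(B)|^{\epsilon} = |{\rm det}(A)|$, gives $v_{\alpha,A} \asymp v_{\alpha,B}$ for every $\alpha \in \RR$. Lemma \ref{lem:suf_dc_equal} then yields equality of $\mathcal{D}(\mathcal{Q}_A,{\rm L}^p,\ell^q_{v_{\alpha,A}})$ and $\mathcal{D}(\mathcal{Q}_B,{\rm L}^p,\ell^q_{v_{\alpha,B}})$ for all $0 < p,q \le \infty$ and all $\alpha$, and transporting back through Theorem \ref{thm:besov_as_decsp} gives $\dot{B}^{\alpha}_{p,q}(A) = \dot{B}^{\alpha}_{p,q}(B)$ for all parameters, i.e.\ $A \sim_{\dot{B}} B$. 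Part (b) is the same with inhomogeneous induced coverings throughout.

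The step I expect to be the main obstacle is the bookkeeping in the previous paragraph: verifying that all hypotheses of Lemma \ref{lem:suf_dc_equal} --- weak equivalence, almost subordinateness, $v_1 \asymp v_2$, and the uniform norm bound on overlapping standardization maps --- genuinely hold, uniformly in $\alpha$, for both the homogeneous and the inhomogeneous induced coverings. None of the individual checks is deep, but two points need care: in the inhomogeneous case one must dispose of the finitely many ``boundary'' overlapping pairs (those involving an index-$0$ cell, in particular the cell with $T_0 = I_d$) by hand before invoking the annulus estimate; and the norm bound from Lemma \ref{lem:char_cequiv_matr1} is available only over $k \in \NN$, which is precisely why the $\NN_0$-indexed inhomogeneous coverings are the right objects in part (b). Note finally that this route deliberately stays with $\rho_{A^T}$ and $\rho_{B^T}$ throughout and never asserts coarse equivalence of $A$ and $B$ themselves, which would indeed be false (cf.\ Remark \ref{rem:transpose}).
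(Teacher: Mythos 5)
Your proposal is correct and follows essentially the same route as the paper's proof: transport via Theorem \ref{thm:besov_as_decsp} to decomposition spaces, apply the rigidity theorem (Theorem \ref{thm:rigidity}) to get weak equivalence of coverings, upgrade to (coarse) equivalence of quasi-norms via Lemma \ref{lem:char_equiv_matr} (resp.\ \ref{lem:char_equiv_matr_ih}), then feed the annulus estimate and the resulting uniform norm bound into Lemma \ref{lem:suf_dc_equal} to recover equality of spaces at every $(p,q,\alpha)$. The paper is terser, but your spelling out of the verification of the hypotheses of Lemma \ref{lem:suf_dc_equal} (including the observation that the weight comparison $v_{\alpha,A} \asymp v_{\alpha,B}$ for \emph{all} $\alpha$ follows from the annulus estimate together with $|\det(A)| = |\det(B)|^\epsilon$, and the note about the boundary index-$0$ cell in the inhomogeneous case) matches what the paper implicitly relies on.
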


\begin{proof}
In both cases, we need to show the ``if'' part of the statement. 
Assuming $\dot{B}_{p,q}^{\alpha}(A) = \dot{B}_{p,q}^{\alpha}(B)$ for one tuple $(p,q) \not= (2,2)$, we have that the homogeneous coverings $\mathcal{Q} = (Q_j)_{j \in \mathbb{Z}}$ and $\mathcal{P} = (P_i)_{i \in \mathbb{Z}}$ induced by $A^T$ and $B^T$, respectively, must be weakly equivalent. Now Lemma \ref{lem:char_equiv_matr} yields that the coverings are strongly equivalent, and also, that the induced quasi-norms are equivalent. This also implies $v_{\alpha,A} \asymp v_{\alpha,B}$. Finally, the equivalence of the homogeneous quasi-norms entails that $Q_j \cap P_i \not= \emptyset$ implies $j \in i_0 + \{ -k,\ldots,k \}$, where  $i_0 = \lfloor j \epsilon \rfloor$, and $k$ is independent of $j$; see the proof of Lemma \ref{lem:char_equiv_matr}. But this yields a uniform upper bound on $\| A^{-j} B^i \|$, via Lemma \ref{lem:norm_equiv}(c). Now Lemma \ref{lem:suf_dc_equal} becomes applicable and shows that 
 \[
  \mathcal{D}(\mathcal{Q},{\rm L}^p,\ell^q_{v_\alpha,A}) = \mathcal{D}(\mathcal{P},{\rm L}^p,\ell^q_{v_\alpha,B})~.
 \] Finally, Theorem \ref{thm:besov_as_decsp} translates this statement to the level of Besov spaces. 
 
 The proof of (b) is entirely analogous, noting that coarse equivalence of the norms is enough to guarantee that $v_{\alpha,A} \asymp v_{\alpha,B}$ holds on $\mathbb{N}_0$. 
\end{proof}

Finally, we record a handy characterization of Besov equivalence: 
\begin{corollary} \label{cor:char_equiv}
 Let $A,B$ denote expansive matrices. 
 \begin{enumerate}
  \item[(a)] $A \sim_{\dot{B}} B$ if and only if the $A^T$- and $B^T$-homogeneous quasi-norms are equivalent. 
  \item[(b)] $A \sim_B B$ if and only if the $A^T$- and $B^T$-homogeneous quasi-norms are coarsely equivalent. 
 \end{enumerate} 
In particular, $A \sim_{\dot{B}}B$ implies $A \sim_B B$.  
 \end{corollary}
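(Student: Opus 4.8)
The plan is to bootstrap the corollary from the machinery already in place: the identification of (homogeneous and inhomogeneous) anisotropic Besov spaces with Fourier-side decomposition spaces over coverings induced by $A^T$ and $B^T$ (Theorem \ref{thm:besov_as_decsp}), the rigidity theorem together with the sufficient criterion for equality of decomposition spaces (Theorem \ref{thm:rigidity} and Lemma \ref{lem:suf_dc_equal}), and the dictionary between (weak) equivalence of induced coverings and (coarse) equivalence of homogeneous quasi-norms (Lemmas \ref{lem:char_equiv_matr} and \ref{lem:char_equiv_matr_ih}). Since the proof of Theorem \ref{thm:rigidity_besov} already carries out the decisive computations, the corollary amounts to running those implications in both directions and unwinding the definitions; I would present parts (a) and (b) in parallel.

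For part (a), the forward direction: if $A \sim_{\dot{B}} B$ then in particular $\dot{B}_{1,1}^{0}(A) = \dot{B}_{1,1}^{0}(B)$, a non-trivial instance. I would transport this equality through $\rho \circ \mathcal{F}$ by Theorem \ref{thm:besov_as_decsp} to obtain $\mathcal{D}(\mathcal{Q}_A,{\rm L}^1,\ell^1_{v_{0,A}}) = \mathcal{D}(\mathcal{Q}_B,{\rm L}^1,\ell^1_{v_{0,B}})$ for homogeneous coverings $\mathcal{Q}_A,\mathcal{Q}_B$ induced by $A^T,B^T$; since $(1,1,1,1)\neq(2,2,2,2)$, Theorem \ref{thm:rigidity} forces $\mathcal{Q}_A$ and $\mathcal{Q}_B$ to be weakly equivalent, and then Lemma \ref{lem:char_equiv_matr} (a)$\Leftrightarrow$(c) gives that the $A^T$- and $B^T$-homogeneous quasi-norms $\rho_{A^T},\rho_{B^T}$ are equivalent. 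For the converse, starting from equivalence of $\rho_{A^T}$ and $\rho_{B^T}$, Lemma \ref{lem:char_equiv_matr} makes the induced homogeneous coverings equivalent; I would then reproduce the step from the proof of Theorem \ref{thm:rigidity_besov} that supplies the norm bound required by Lemma \ref{lem:suf_dc_equal} (via Lemma \ref{lem:norm_equiv}, together with $v_{\alpha,A}\asymp v_{\alpha,B}$ for each $\alpha$), conclude $\mathcal{D}(\mathcal{Q}_A,{\rm L}^p,\ell^q_{v_{\alpha,A}}) = \mathcal{D}(\mathcal{Q}_B,{\rm L}^p,\ell^q_{v_{\alpha,B}})$ for all $\alpha,p,q$, and translate back through Theorem \ref{thm:besov_as_decsp} to get $A \sim_{\dot{B}} B$. (Alternatively one may stop after obtaining a single instance of equality and invoke Theorem \ref{thm:rigidity_besov}(a).)

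Part (b) is the verbatim analogue with the inhomogeneous objects: replace $\dot{B}$-spaces by $B$-spaces, homogeneous coverings by inhomogeneous ones, the homogeneous half of Theorem \ref{thm:besov_as_decsp} and Theorem \ref{thm:rigidity_besov}(a) by their inhomogeneous counterparts, and Lemma \ref{lem:char_equiv_matr} by Lemma \ref{lem:char_equiv_matr_ih}. The one substantive difference is that weak equivalence of the \emph{inhomogeneous} induced coverings corresponds, by Lemma \ref{lem:char_equiv_matr_ih}, to \emph{coarse} equivalence of $\rho_{A^T}$ and $\rho_{B^T}$ rather than equivalence, and that $v_{\alpha,A}\asymp v_{\alpha,B}$ on $\mathbb{N}_0$ already follows from coarse equivalence. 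The final assertion is then immediate: by Remark \ref{rem:coarse_equiv}(a) equivalent quasi-norms are coarsely equivalent, so $A \sim_{\dot{B}} B$ implies, through (a) followed by (b), that $A \sim_{B} B$.

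I do not expect a genuine obstacle; the points that need care are (i) that the coverings entering Theorem \ref{thm:besov_as_decsp} are induced by the \emph{transposes} $A^T,B^T$, so the statement is correctly phrased in terms of $\rho_{A^T},\rho_{B^T}$, and one should \emph{not} try to pass to $\rho_A,\rho_B$ via Remark \ref{rem:transpose}, since the analogue of that remark fails for coarse equivalence; and (ii) that in the ``$\Leftarrow$'' directions, even if one prefers to finish by quoting Theorem \ref{thm:rigidity_besov}, one still has to produce at least one instance of equality of Besov spaces, which already requires passing through the norm estimate feeding Lemma \ref{lem:suf_dc_equal} — there is no shortcut directly from ``quasi-norms equivalent'' to ``Besov spaces equal''.
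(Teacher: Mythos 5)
Your argument is correct and follows the route the paper clearly intends (the corollary is stated without proof, but it follows from exactly the chain you trace: Theorem \ref{thm:besov_as_decsp} to pass to decomposition spaces, Theorem \ref{thm:rigidity} and Lemmas \ref{lem:char_equiv_matr}/\ref{lem:char_equiv_matr_ih} to convert weak equivalence of coverings into (coarse) equivalence of the quasi-norms $\rho_{A^T},\rho_{B^T}$, then Lemma \ref{lem:suf_dc_equal} back to equality of spaces). Your two cautionary remarks — that one must not invoke Remark \ref{rem:transpose} to replace $\rho_{A^T}$ by $\rho_A$ in the coarse case, and that the ``$\Leftarrow$'' direction still routes through the norm bound for Lemma \ref{lem:suf_dc_equal} — are both accurate and worth recording.
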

%
%

\begin{remark} \label{rem:rel_Hardy_eq}
 Our result provides an interesting contrast to the results of Bownik \cite{Bow03}, who studied the analogous question for anisotropic Hardy spaces. Defining $A \sim_H B$ by the requirement that the anisotropic Hardy spaces induced by $A$ and $B$ coincide, he obtained that $A \sim_H B$ if and only if $\rho_A$ and $\rho_B$ are equivalent \cite[Theorem 10.5]{Bow03}. Hence, in view of Remark \ref{rem:transpose}, we find that $A \sim_H B$ if and only if $A \sim_{\dot{B}} B$. 
\end{remark}

\section{Characterizing (coarse) equivalence of matrices}

\label{sect:char_equiv}

It remains to derive explicit and checkable criteria for the (coarse) equivalence of matrices. 
We first derive necessary criteria in terms of generalized eigenspaces, based on an approach developed in \cite{Bow05}. This requires the following class of auxiliary subspaces. 
\begin{definition}
 Given a matrix $A \in \mathbb{C}^{d \times d}$, we define for $r >0$ and $m \in \mathbb{N}_0$ 
 \[
  E(A,r,m) = {\rm span} \left(  \bigcup_{|\lambda| = r} {\rm Ker}(A-\lambda I_d)^m \cup \bigcup_{|\lambda|<r}   {\rm Ker}(A-\lambda I_d)^d  \right)~.
 \]
\end{definition}

The significance of these auxiliary spaces becomes apparent by the following lemma, which characterizes them by the asymptotic behaviour of $|A^k z|$, as $k \to \infty$. For a proof, see \cite[Lemma (10.4)]{Bow03}:
\begin{lemma} \label{lem:norm_growth}
 Let $A \in \mathbb{C}^{d \times d}$. For any $z \in \mathbb{C}^n \setminus \{ 0 \}$, $r>0$ and $m \in \mathbb{N}_0$, the condition
 \[
  z \in E(A,r,m+1) \setminus E(A,r,m) 
 \] is equivalent to the existence of a constant $c>0$ and $k_0 \in \mathbb{N}$ such that 
 \[
  \forall k \ge k_0~:~ \frac{1}{c} k^m r^k \le |A^k z| \le c k^m r^k~. 
 \]
\end{lemma}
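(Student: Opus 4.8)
The plan is to reduce to the Jordan normal form of $A$ over $\mathbb{C}$. Write $A = P J P^{-1}$ with $J$ block-diagonal, each block $J_\ell$ a Jordan block of size $n_\ell$ for an eigenvalue $\mu_\ell$. Since $P$ and $P^{-1}$ are fixed invertible matrices, the quantities $|A^k z|$ and $|J^k (P^{-1}z)|$ are comparable up to multiplicative constants independent of $k$; moreover $E(A,r,m)$ is exactly $P$ applied to $E(J,r,m)$, because the generalized eigenspaces of $A$ are the images under $P$ of the coordinate subspaces spanned by the Jordan blocks, and $\mathrm{Ker}(A-\lambda I)^m = P\,\mathrm{Ker}(J-\lambda I)^m$. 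Hence it suffices to prove the statement for $A$ in Jordan form, and then for a single Jordan block $J_\ell$, since $|J^k w|$ is (up to constants) the $\ell^2$-aggregate of the $|J_\ell^k w_\ell|$ over the blocks, and both sides of the claimed asymptotic are governed by the ``dominant'' block. This is the main structural reduction and where one must be a little careful: one has to check that the maximum over blocks of terms of the form $k^{a} s^{k}$ is, for large $k$, comparable to the single largest such term — which is true because distinct values of $s$ separate exponentially and equal values of $s$ with different powers of $k$ are dominated by the top power.

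Next I would compute $J_\ell^k$ explicitly. For a Jordan block of size $n$ with eigenvalue $\mu$, the $(p,q)$ entry of $J_\ell^k$ is $\binom{k}{q-p}\mu^{k-(q-p)}$ for $q \ge p$ and $0$ otherwise, so for a vector $w = (w_1,\dots,w_n)^T$ the $p$-th component of $J_\ell^k w$ is $\sum_{q \ge p} \binom{k}{q-p}\mu^{k-(q-p)} w_q$. Setting $r_\ell = |\mu|$, the modulus of this component is, up to constants depending only on $w$ and $n$, comparable to $k^{M_p} r_\ell^{k}$ as $k \to \infty$, where $M_p$ is the largest value of $q - p$ over those $q \ge p$ with $w_q \ne 0$ (here $\binom{k}{j} \asymp k^j$). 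Taking the maximum over $p = 1,\dots,n$, the norm $|J_\ell^k w|$ behaves like $k^{M} r_\ell^{k}$ with $M = \max\{ j : w_{q} \ne 0 \text{ for some } q \text{ with } j = q - 1 \text{-shift} \}$, i.e. $M = (\text{index of the last nonzero coordinate of } w) - 1$; one should double-check there is no cancellation in the leading term, which is fine since the leading coefficient in each affected component is a single nonzero $w_q$ times $\binom{k}{\,\cdot\,}$.

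Finally I would match this up with the definition of $E(A,r,m)$. The condition $z \in E(A,r,m+1)\setminus E(A,r,m)$, after passing to $J$-coordinates, means: all Jordan blocks with eigenvalue modulus $> r$ contribute zero to $w = P^{-1}z$; blocks with modulus exactly $r$ contribute vectors lying in $\mathrm{Ker}(J_\ell - \mu_\ell I)^{m+1}$ (i.e. supported on the first $m+1$ coordinates of the block), with at least one such contribution not lying in $\mathrm{Ker}(J_\ell - \mu_\ell I)^{m}$; and blocks with modulus $< r$ are unconstrained (they lie in the kernel of $(J_\ell-\mu_\ell I)^d = 0$ automatically). By the block-by-block asymptotics above, the blocks with modulus $r$ then contribute a term of exact order $k^{m} r^{k}$ (the ``$m+1$-st coordinate occupied, no higher'' gives shift exponent exactly $m$), the blocks with modulus $<r$ contribute lower-order terms $k^{\le d} s^{k}$ with $s < r$, and there are no blocks with modulus $> r$ contributing. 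Aggregating, $|A^k z| \asymp k^m r^k$ for large $k$, which is the claimed two-sided bound. Conversely, if such a two-sided bound $\frac{1}{c}k^m r^k \le |A^k z| \le c k^m r^k$ holds, then the exponential rate forces all contributions from blocks of modulus $>r$ to vanish and forbids the modulus-$r$ blocks from reaching the $(m+2)$-nd coordinate while requiring at least one to reach the $(m+1)$-st, which is precisely $z \in E(A,r,m+1)\setminus E(A,r,m)$.

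The step I expect to be the main obstacle is the ``no cancellation / dominant term'' bookkeeping: ensuring that when several Jordan blocks share the same eigenvalue modulus $r$ (but possibly different eigenvalues on the circle $|\mu| = r$, or the same eigenvalue in several blocks), the leading-order terms $k^m r^k$ add up without a miraculous cancellation that would drop the order below $k^m r^k$. This is handled by noting that within a single block the leading coefficient is a product of a nonzero coordinate $w_q$ and a binomial coefficient (hence nonzero), and across blocks with distinct eigenvalues on $|\mu|=r$ the terms carry different oscillatory factors $\mu^k$ so their $\ell^2$-aggregate cannot cancel; the clean way to argue the lower bound uniformly is to extract a single scalar component of $A^k z$ in which the top-order term survives, which always exists. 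Everything else is the routine Jordan-form computation and the elementary fact that $\binom{k}{j}$ is comparable to $k^j$ for fixed $j$ and large $k$.
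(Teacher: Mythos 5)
The paper gives no proof of this lemma; it simply refers to Bownik's Lemma (10.4), whose proof is also a Jordan normal form computation, so your approach is in line with the source, and your sketch is correct. One small simplification worth noting: the cross-block cancellation you worry about cannot occur, because distinct Jordan blocks occupy disjoint coordinate slots in the $J$-basis, so $|J^k w|^2 = \sum_\ell |J_\ell^k w_\ell|^2$ automatically dominates each summand $|J_\ell^k w_\ell|^2$; the ``oscillatory factors'' remark is therefore unnecessary, and the cleaner route you already flag --- lower-bound $|J^k w|$ by the modulus of a single scalar component whose leading binomial term is $\binom{k}{m}\mu^{k-m} w_{m+1}$ with $w_{m+1}\neq 0$ and $|\mu| = r$ --- is all that is needed. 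Together with the elementary two-sided comparison $\|P^{-1}\|^{-1} |J^k w| \le |A^k z| \le \|P\|\,|J^k w|$ for $w=P^{-1}z$, the sketch fills in to a complete proof.
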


We can now give necessary criteria in terms of generalized eigenspaces. 
\begin{lemma} \label{lem:char_eigenspaces}
Let $A,B$ denote expansive matrices. 
\begin{enumerate}
\item[(a)] If $A$ and $B$ are coarsely equivalent, then 
for all $r>0$ and all $m \in \mathbb{N}$
 \[
  E(A^{-1},r^\epsilon,m) = E(B^{-1},r,m)~.
 \]
\item[(b)] If $A$ and $B$ are equivalent, then for all $r>1$ and all $m \in \mathbb{N}$:
 \[
  {\rm span}\left( \bigcup_{|\lambda| = r^\epsilon} {\rm Ker}(A-\lambda I_d)^m \right)  =  {\rm span}\left( \bigcup_{|\lambda| = r} {\rm Ker}(B-\lambda I_d)^m \right) ~.
 \]
\end{enumerate}
\end{lemma}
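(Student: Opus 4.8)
The plan is to deduce both statements from Lemma \ref{lem:char_cequiv_matr1} (for part (a)) and Lemma \ref{lem:norm_equiv} together with Remark \ref{rem:transpose} (for part (b)), combined with the asymptotic characterization of the spaces $E(\cdot,r,m)$ furnished by Lemma \ref{lem:norm_growth}. The guiding idea is that coarse equivalence of $A,B$ means $\sup_{k\in\mathbb{N}}\|A^{-k}B^{\lfloor \epsilon k\rfloor}\|<\infty$, and by Lemma \ref{lem:norm_est} (applied to the determinant bound \eqref{eqn:dets_equiv}) also $\sup_{k\in\mathbb{N}}\|B^{-\lfloor\epsilon k\rfloor}A^k\|<\infty$; hence for every nonzero $z$ the vectors $A^{-k}z$ and $B^{-\lfloor\epsilon k\rfloor}z$ are comparable in euclidean length, uniformly in $k\ge k_0$. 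Then I would run $z$ through the dichotomy of Lemma \ref{lem:norm_growth} applied to the expansive matrices $A^{-1}$ and $B^{-1}$ (note $\rho_{A^{-1}}$ is governed by eigenvalue moduli $<1$, but Lemma \ref{lem:norm_growth} is stated for general $A\in\mathbb{C}^{d\times d}$, so it applies verbatim).

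For part (a): fix $r>0$ and $m\in\mathbb{N}_0$ and take $z\in\mathbb{C}^d\setminus\{0\}$. By Lemma \ref{lem:norm_growth}, $z\in E(B^{-1},r,m+1)\setminus E(B^{-1},r,m)$ is equivalent to $\frac1c\,\ell^m r^\ell \le |B^{-\ell}z|\le c\,\ell^m r^\ell$ for all large $\ell$. Setting $\ell=\lfloor\epsilon k\rfloor$ and using $|B^{-\lfloor\epsilon k\rfloor}z|\asymp |A^{-k}z|$ together with $\lfloor\epsilon k\rfloor^m\asymp k^m$ and $r^{\lfloor\epsilon k\rfloor}\asymp (r^{\epsilon})^k$ (the multiplicative error $r^{\lfloor\epsilon k\rfloor-\epsilon k}$ being bounded between $\min(1,r^{-1})$ and $\max(1,r^{-1})$), this translates into $\frac1{c'}k^m (r^\epsilon)^k\le |A^{-k}z|\le c'\,k^m(r^\epsilon)^k$ for all large $k$ — which, again by Lemma \ref{lem:norm_growth}, says exactly $z\in E(A^{-1},r^\epsilon,m+1)\setminus E(A^{-1},r^\epsilon,m)$. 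Thus the two filtrations of $\mathbb{C}^d$ by these subspaces agree level by level, so $E(A^{-1},r^\epsilon,m)=E(B^{-1},r,m)$ for all $r>0,m\in\mathbb{N}_0$. One should remark that, strictly, Lemma \ref{lem:char_cequiv_matr1} is about $A,B$ themselves and gives the euclidean comparability only for $x$ with $|x|\ge 1$, i.e. uniformly for $k\ge k_0$ as written in its proof; since only the asymptotic ($k\to\infty$) behaviour enters Lemma \ref{lem:norm_growth}, this suffices.

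For part (b): if $A$ and $B$ are (strongly) equivalent, then Lemma \ref{lem:norm_equiv} gives $\sup_{k\in\mathbb{Z}}\|A^{-k}B^{\lfloor\epsilon k\rfloor}\|<\infty$, and via Remark \ref{rem:transpose} / Lemma \ref{lem:norm_est} also $\sup_{k\in\mathbb{Z}}\|B^{-\lfloor\epsilon k\rfloor}A^{k}\|<\infty$; so now $|A^{-k}z|\asymp |B^{-\lfloor\epsilon k\rfloor}z|$ holds for \emph{all} $k\in\mathbb{Z}$, in particular as $k\to+\infty$ \emph{and} as $k\to-\infty$. Running the same translation as in (a), for every $z$ and every $r>1,m$ we get $z\in E(A^{-1},r^\epsilon,m+1)\setminus E(A^{-1},r^\epsilon,m)\iff z\in E(B^{-1},r,m+1)\setminus E(B^{-1},r,m)$, hence equality of these subspaces; but applying the same with $k\to-\infty$ (equivalently, replacing $A^{-1},B^{-1}$ by $A,B$ and using $|A^k z|\asymp|B^{\lfloor\epsilon k\rfloor}z|$ for $k\ge k_0$) gives in addition $E(A,r^\epsilon,m)=E(B,r,m)$ for $r>1$. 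Taking the difference of consecutive levels, $E(A,r^\epsilon,m+1)\cap(\text{complement of }E(A,r^\epsilon,m))$ corresponds to vectors whose $A$-orbit grows like $k^m (r^\epsilon)^k$; subtracting off the $|\lambda|<r^\epsilon$ part (which is common to all $m$ and equals $\bigcup_{|\lambda|<r^\epsilon}\mathrm{Ker}(A-\lambda I_d)^d$) isolates precisely $\mathrm{span}\big(\bigcup_{|\lambda|=r^\epsilon}\mathrm{Ker}(A-\lambda I_d)^m\big)$ up to adding the fixed subspace $E(A,r^\epsilon,0)$; a short bookkeeping argument comparing the two sides of $E(A,r^\epsilon,m)=E(B,r,m)$ for consecutive $m$ then yields the claimed identity $\mathrm{span}(\bigcup_{|\lambda|=r^\epsilon}\mathrm{Ker}(A-\lambda I_d)^m)=\mathrm{span}(\bigcup_{|\lambda|=r}\mathrm{Ker}(B-\lambda I_d)^m)$.

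**Main obstacle.** The only genuinely delicate point is extracting the \emph{generalized-eigenspace} statement in (b) from the equalities of the $E(\cdot,r,m)$: one must be careful that $E(A,r,m)$ mixes the "$|\lambda|=r$ with Jordan depth $\le m$" part with the "$|\lambda|<r$ with full Jordan depth" part, so isolating the pure modulus-$r$ spans requires taking differences $E(A,r^\epsilon,m)$ versus $E(A,r^\epsilon,m-1)$ (and versus $E(A,\text{slightly smaller radius},d)$) and checking these differences match on the $A$- and $B$-sides. This is a linear-algebra bookkeeping step rather than a conceptual one; the translation of asymptotics via $\ell=\lfloor\epsilon k\rfloor$ is routine given Lemmas \ref{lem:char_cequiv_matr1}, \ref{lem:norm_equiv} and \ref{lem:norm_growth}. $\Box$
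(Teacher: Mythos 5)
Your part (a) is essentially the paper's argument, and your direction choice (from the $B^{-1}$-asymptotic to the $A^{-1}$-asymptotic by substituting $\ell=\lfloor\epsilon k\rfloor$) is actually a touch cleaner than the paper's: the paper goes $A^{-1}\to B^{-1}$ and therefore has to patch the gaps in $\{\lfloor\epsilon k\rfloor : k\ge k_0\}$ when $\epsilon<1$, via a decomposition $\ell=\ell_1+j$ with $0\le j\le\lceil 1/\epsilon\rceil$, before invoking Lemma~\ref{lem:norm_growth}. Going in your direction, no gap-filling is needed. You should, however, make explicit the partition argument you invoke when you conclude ``the filtrations agree level by level'' from the one-sided inclusion of difference sets: the sets $E(\cdot,r,m+1)\setminus E(\cdot,r,m)$, as $r$ runs over eigenvalue moduli and $m$ over depths, partition $\mathbb{C}^d\setminus\{0\}$, and a one-sided inclusion of one partition into the other (respecting the bijection $r\mapsto r^\epsilon$) does force equality; this is correct but needs a few words. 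The paper instead simply appeals to symmetry in $A,B$.

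For part (b) there is a genuine gap. You correctly observe that in the strongly equivalent case the comparison runs over all $k\in\mathbb{Z}$, giving both $E(A^{-1},r^\epsilon,m)=E(B^{-1},r,m)$ and $E(A,r^\epsilon,m)=E(B,r,m)$, which are exactly the ingredients the paper uses. But the final step of isolating $\mathrm{span}\bigl(\bigcup_{|\lambda|=r}\mathrm{Ker}(B-\lambda I_d)^m\bigr)$ from these equalities cannot be done by ``taking the difference of consecutive levels and subtracting off the $|\lambda|<r$ part'': the difference sets $E(B,r,m+1)\setminus E(B,r,m)$ are not subspaces, and ``complement of a fixed subspace'' is not a canonical linear-algebraic operation, so the construction you sketch does not produce a well-defined subspace that can be matched on the $A$- and $B$-sides. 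The device the paper uses, which you are missing, is the \emph{intersection} identity
\[
 E(B^{-1},r^{-1},m)\cap E(B,r,m) = \mathrm{span}\Bigl(\bigcup_{|\lambda|=r}\mathrm{Ker}(B-\lambda I_d)^m\Bigr),
\]
and analogously for $A$ with $r$ replaced by $r^\epsilon$. This is immediate from the definition of $E(\cdot,r,m)$: $E(B^{-1},r^{-1},m)$ picks out eigenvalues $\mu$ of $B$ with $|\mu|>r$ (full depth) or $|\mu|=r$ (depth $\le m$), while $E(B,r,m)$ picks out $|\mu|<r$ (full depth) or $|\mu|=r$ (depth $\le m$); intersecting leaves exactly the modulus-$r$ part to depth $m$. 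With this identity, (b) follows directly from the two $E$-equalities without any further bookkeeping. Also note a slip in your narrative: in the sentence about $E(A^{-1},r^\epsilon,m)$ you require $r>1$, but for $A^{-1}$ the relevant eigenvalue moduli lie in $(0,1)$; for $r>1$ those $E$-spaces are all of $\mathbb{C}^d$ and the equivalence you state is vacuous. The radius $r>1$ belongs to the $E(A,\cdot,\cdot)$, $E(B,\cdot,\cdot)$ spaces, and $r^{-1}<1$ to the $E(A^{-1},\cdot,\cdot)$, $E(B^{-1},\cdot,\cdot)$ spaces.
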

\begin{proof}
 For the proof of (a) assume that $A$ and $B$ are coarsely equivalent. By Lemma \ref{lem:char_cequiv_matr1}, this is equivalent to 
 \[
\sup_{k \in \mathbb{N}}  \| A^{-k} B^{\lfloor \epsilon k \rfloor} \| < \infty ~.
 \]
 Note that Lemma \ref{lem:norm_est} then also provides
 \[
  \sup_{k \in \mathbb{N}} \| B^{-\lfloor \epsilon k \rfloor} A^k \| < \infty ~,
 \]
and the two estimates provide constants $0 < c_1 \le c_2 < \infty$ such that, for all $z \in \mathbb{C}^n \setminus \{ 0 \}$, and all $k \in \mathbb{N}$,
\begin{equation}  \label{eqn:norm_equiv_aux}
 c_1 \le \frac{|A^{-k} z|}{|B^{-\lfloor \epsilon k \rfloor} z|} \le c_2~. 
\end{equation}

 Now assume that $z \in E(A^{-1},r^\epsilon,m+1) \setminus E(A^{-1},r^\epsilon,m)$. Then Lemma \ref{lem:norm_growth} yields the existence of $c>0$ and $k_0 \in \mathbb{N}$ such that
 \[
  \forall k \ge k_0~:~ \frac{1}{c} k^m r^{\epsilon k} \le |A^{-k} z| \le c k^m r^{\epsilon k}~. 
 \]
This entails, via (\ref{eqn:norm_equiv_aux}), that 
\[
  \frac{c_1}{c} k^m r^{\epsilon k} \le |B^{-\lfloor \epsilon k \rfloor}| \le c_2 c k^m r^{\epsilon k}~, 
\] and thus, since 
\[
 r \le r^{\epsilon k - \lfloor \epsilon k \rfloor} \le 1~,
\]
we obtain, with a new constant $c'>0$, and for all $\ell \ge \lceil k_0 \epsilon \rceil$, that 
\begin{equation} \label{eqn:almost_asymp}
\frac{1}{c'} \ell^m r^{\ell} \le |B^{-\ell} z| \le c' \ell^m r^\ell~,
\end{equation}
as long as $\ell \in M = \{ \lfloor \epsilon k \rfloor : k \ge k_0 \}$. Now let $\ell \ge \lfloor \epsilon k_0 \rfloor$ be arbitrary. Then there exists $\ell_1 \le \ell$ and $j \in \{ 0,\ldots, \lceil 1/\epsilon \rceil \}$ such that $\ell_1 \in M$ and $\ell = \ell_1 + j$.  Assuming in addition that $\ell \ge \ell_0 = \max(2 \lceil 1/\epsilon, \rceil,\lfloor \epsilon k_0 \rfloor)$, we obtain the estimates
\begin{eqnarray*}
 \frac{1}{c'} \ell^m r^\ell & = & \frac{1}{c'} (\ell_1+j)^m r^{\ell_1+j}  \\
 & \le & \frac{1}{c'} 2^m r^j \ell_1^m r^{\ell_1} \le  2^m |B^{-\ell_1} z| \\
  & \le &  2^m \max_{0 \le j \le \lceil 1/\epsilon \rceil} \| B^j \|~ |B^{-\ell} z|~,
\end{eqnarray*} where we also used that since $A$ is expansive, the assumption that $z \in E(A^{-1},r^\epsilon,m+1) \setminus \{ 0 \}$ forces $r < 1$.  
By a similar calculation, we obtain
\[
 c' \ell^m r^\ell \ge r^{\lceil 1/\epsilon \rceil} \min_{0 \le j \le \lceil 1/\epsilon \rceil} \| B^{-j} \|^{-1} ~ |B^{-\ell} z|~,
\] and thus we have shown that (\ref{eqn:almost_asymp}), with different constant $c''$ instead of $c'$, holds for all $\ell \ge \ell_0$. But then Lemma \ref{lem:norm_growth} entails $z \in E(B^{-1},r,m+1) \setminus E(B^{-1},r,m)$.

The same argument with $A,B$ interchanged yields $E(B^{-1},r,m+1) \setminus E(B^{-1},r,m) \subset  E(A^{-1},r^\epsilon,m+1) \setminus E(A^{-1},r^\epsilon,m)$, and thus equality of the two difference sets. Now induction, first over $m$, then over the eigenvalues of $A^{-1}$ in increasing order, yields
$E(A^{-1},r^\epsilon,m) =   E(B^{-1},r,m)$. 

For the proof of (b) we observe that the above argument can also be applied to $k <0$, and then it yields
$E(A,r^{\epsilon},m)  = E(B,r,m)$ as well, for all $r,m$. Now, finally, the observation that
\[
E(B^{-1},r^{-1},m) \cap E(B,r,m) =  {\rm span}\left( \bigcup_{|\lambda| = r} {\rm Ker}(B-\lambda I_d)^m \right)\,\,
\] and the analogous fact about $A$ yields the conclusion of (b).
\end{proof}

\begin{remark} \label{rem:counter_bownik}
Part (b) of the previous lemma was noted by Bownik, see \cite[Theorem (10.3)]{Bow03}, and our proof is essentially an adaptation of the proof given there. The cited theorem also states the converse, i.e., that the necessary condition in (b) is sufficient as well. This is {\em false}, as can be seen with the help of the pair $A,B$ of matrices given by 
\[
A = \left( \begin{array}{cc} 2 & 2 \\ 0 & 2 \end{array} \right) \,\, ,\,\, B =  \left( \begin{array}{cc} 2 & 4 \\ 0 & 2 \end{array} \right)\,.
\] These matrices fulfill $\epsilon(A,B)=1$ and the necessary condition of part (b). However, one readily computes for all $k \in \mathbb{N}$ that
\[
A^{-k} B^k =  \left( \begin{array}{cc} 1 & k \\ 0 & 1 \end{array} \right) \,\, .
\]  Hence the matrices are not even coarsely equivalent. 
\end{remark}

We next want to reduce the general discussion to a subclass of expansive matrices, those having only positive eigenvalues and a fixed determinant. This requires a few further auxiliary notions:
\begin{definition}
The exponential map $\mathbb{R}^{d \times d} \to {\rm GL}(d,\mathbb{R})$ is defined by
\[
\exp(X) = \sum_{k=0}^\infty \frac{X^k}{k !}\,\,.
\]
\end{definition}

It is not hard to see that the exponential map is well-defined on $\mathbb{R} ^{d \times d}$ and satisfies $\exp(X+Y) = \exp(X) \exp(Y)$, whenever $X$ and $Y$ commute \cite[Proposition 3.2.1]{HiNe}. Furthermore, for any fixed matrix $X$ the map $t \mapsto \exp(tX)$ is a continuous homomorphism $\mathbb{R} \to {\rm GL}(d,\mathbb{R})$ \cite[Theorem 3.2.6]{HiNe}, and its image is a so-called one-parameter subgroup. The next lemma states that two expansive matrices contained in the same one-parameter subgroup have equivalent quasi-norms.
\begin{lemma}
Let $A$, $B$ be expansive matrices, and assume that $A = \exp(tX)$ and $B = \exp(sX)$, for some matrix $X$ and $s,t>0$. Then $A$ and $B$ are equivalent.
\end{lemma}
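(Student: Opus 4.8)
The plan is to verify the norm criterion of Lemma~\ref{lem:norm_equiv}: it suffices to show that $\sup_{k \in \mathbb{Z}} \| A^{-k} B^{\lfloor \epsilon k \rfloor} \| < \infty$, where $\epsilon = \epsilon(A,B)$. The underlying idea is that since $A$ and $B$ lie on a common one-parameter subgroup, the products $A^{-k} B^{\lfloor \epsilon k \rfloor}$ are again of the form $\exp(\theta X)$, and for the correct value of $\epsilon$ the exponents $\theta$ remain in a bounded interval.

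First I would compute $\epsilon(A,B)$. Recalling that $\det(\exp Y) = e^{{\rm tr}(Y)} > 0$ for every real matrix $Y$, we get $|\det(A)| = e^{t\,{\rm tr}(X)}$ and $|\det(B)| = e^{s\,{\rm tr}(X)}$. Since $A$ is expansive and $t > 0$, this forces ${\rm tr}(X) > 0$, and therefore
\[
 \epsilon(A,B) = \frac{\ln |\det(A)|}{\ln |\det(B)|} = \frac{t\,{\rm tr}(X)}{s\,{\rm tr}(X)} = \frac{t}{s}~.
\]

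Next, using that $\exp(uX)$ and $\exp(vX)$ commute and multiply to $\exp((u+v)X)$ for all $u,v \in \mathbb{R}$, one has $A^{-k} = \exp(-ktX)$ and $B^{\lfloor \epsilon k \rfloor} = \exp(s\lfloor \epsilon k \rfloor X)$, hence
\[
 A^{-k} B^{\lfloor \epsilon k \rfloor} = \exp\!\big( ( s\lfloor \epsilon k \rfloor - kt) X \big) = \exp(\theta_k X), \qquad \theta_k := s\big( \lfloor \epsilon k \rfloor - \epsilon k \big)~,
\]
where the last equality uses $\epsilon = t/s$. Since $\lfloor y \rfloor - y \in (-1,0]$ for every $y \in \mathbb{R}$, we get $\theta_k \in [-s,0]$ for all $k \in \mathbb{Z}$, so every $A^{-k} B^{\lfloor \epsilon k \rfloor}$ lies in the set $\{ \exp(\theta X) : \theta \in [-s,0] \}$. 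This set is compact, being the continuous image of $[-s,0]$ under $\theta \mapsto \exp(\theta X)$, hence bounded in operator norm. Thus $\sup_{k \in \mathbb{Z}} \| A^{-k} B^{\lfloor \epsilon k \rfloor} \| < \infty$, and Lemma~\ref{lem:norm_equiv} yields that $A$ and $B$ are equivalent.

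I do not expect a genuine obstacle here; the only point requiring a little care is the determinant computation that pins down $\epsilon(A,B) = t/s$, since it is precisely this value that keeps the exponents $\theta_k$ in a bounded interval rather than drifting off to $\pm\infty$.
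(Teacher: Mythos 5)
Your proof is correct and follows essentially the same route as the paper: compute $\epsilon(A,B)=t/s$ via $\det(\exp Y)=e^{\operatorname{tr}(Y)}$, observe that $A^{-k}B^{\lfloor \epsilon k\rfloor}=\exp(r_k X)$ with $r_k \in [-s,0]$, and bound the norm by compactness of $\{\exp(rX): r\in[-s,0]\}$. The only addition is your explicit remark that $\operatorname{tr}(X)>0$, which the paper leaves implicit.
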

\begin{proof}
The well-known formula $det(\exp(X)) = \exp(tr(X))$, with $tr(X)$ denoting the trace of the matrix $X$,  allows to compute
\[
\epsilon(A,B) =  \frac{\ln (|{\rm det}(A)|)}{\ln (|{\rm det}(B)|)} = \frac{t}{s}\,\,. 
\] and since $t \mapsto \exp(tX)$ is a homomorphism, we get
\[
A^{-k} B^{\lfloor \epsilon k \rfloor} = \exp\left( (-kt + \lfloor \frac{t}{s} k \rfloor s) X \right) = \exp(r_k X)\, 
\] with $-s \le r_k \le 0$. It follows that
\[
\sup_{k \in \mathbb{Z}} \| A^{-k} B^{\lfloor \epsilon k \rfloor} \| \le \sup_{-s \le r \le 0} \| \exp(rX) \| < \infty\,\,
\] hence $A$ and $B$ are equivalent. 
\end{proof}

A further step towards simplification is the observation that every expansive matrix is equivalent to a matrix having only positive eigenvalues. 

\begin{lemma} \label{lem:ex_pos_spec}
Let $A$ denote an expansive matrix. Then there exists a matrix $B$ which is equivalent to $A$, has only positive eigenvalues, and fulfills $|\det(A)| = \det(B)$.
\end{lemma}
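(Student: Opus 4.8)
The plan is to conjugate $A$ into real Jordan canonical form and modify each Jordan block separately, replacing every block whose eigenvalues are not positive by a block with a single positive eigenvalue and the same absolute determinant, while keeping the equivalence class unchanged.

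First I would write $A = CJ_A C^{-1}$, where $J_A = \mathrm{diag}(J^{(1)},\dots,J^{(p)})$ is the real Jordan form of $A$; each $J^{(i)}$ is either an ordinary Jordan block $\lambda I_m + N_m$ for a real eigenvalue $\lambda$ with $|\lambda|>1$ ($N_m$ the nilpotent matrix with $1$'s on the superdiagonal), or, for a conjugate pair $re^{\pm i\theta}$ with $r>1$ and $\theta\in(0,\pi)$, the real block of size $2\ell$ with diagonal blocks $rR_\theta$ ($R_\theta$ the rotation by $\theta$) and $2\times 2$ identity blocks on the superdiagonal. Then it suffices to produce, for each $i$, a real matrix $B_{J^{(i)}}$ of the same size with only positive eigenvalues, with $\det B_{J^{(i)}} = |\det J^{(i)}|$, and with $J^{(i)}$ equivalent to $B_{J^{(i)}}$. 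Granting this, set $B' = \mathrm{diag}(B_{J^{(1)}},\dots,B_{J^{(p)}})$ and $B = CB'C^{-1}$. Then the eigenvalues of $B$ are positive, and $\det B = \det B' = \prod_i |\det J^{(i)}| = |\det J_A| = |\det A|$; moreover each $|\det J^{(i)}| > 1$, so $\epsilon(J^{(i)},B_{J^{(i)}}) = 1 = \epsilon(J_A,B')$, whence Remark \ref{rem:equiv_JNF}(a) gives $J_A\sim B'$ and Remark \ref{rem:equiv_JNF}(b) gives $A\sim B$. (Such a $B$ is automatically expansive.)

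It then remains to treat a single block $J$ with common eigenvalue modulus $r>1$ and size $n$. If the eigenvalue is $r$ itself, then $J = rI + N$ already has only the positive eigenvalue $r$ with $\det J = r^n$, so $B_J = J$ works. If the eigenvalue is $-r$, put $B_J = rI - N$, which is upper triangular with diagonal entries $r$, hence has only the eigenvalue $r$ and $\det B_J = r^n = |\det J|$; since $J = -(rI-N) = -B_J$, we get $\epsilon(J,B_J) = 1$ and $J^{-k}B_J^{k} = (-1)^{k}I$, which is bounded, so $J\sim B_J$ by Lemma \ref{lem:norm_equiv}. For the complex block, write $D$ for its block-diagonal part $\mathrm{diag}(rR_\theta,\dots,rR_\theta) = r\exp(\theta\tilde J_0)$, where $\tilde J_0 = \mathrm{diag}(J_0,\dots,J_0)$ and $J_0 = \left(\begin{smallmatrix} 0 & -1\\ 1 & 0\end{smallmatrix}\right)$, and write $S$ for the nilpotent superdiagonal part, so that $J = D+S = D\exp(M)$ with $M = \log(I + r^{-1}D^{-1}S)$, a terminating (hence real, nilpotent) series. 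A direct block computation shows $S$ commutes with $D$ and with $\tilde J_0$; consequently $I$, $\tilde J_0$ and $M$ pairwise commute and $J = \exp\big((\log r)I + \theta\tilde J_0 + M\big)$. Put $B_J = r\exp(M) = \exp\big((\log r)I + M\big)$, which has only the positive eigenvalue $r$ and $\det B_J = r^n = |\det J|$; using the commutativity, $J^{-k}B_J^{k} = \exp(-k\theta\tilde J_0)$, an orthogonal (hence norm-bounded) matrix, so $J\sim B_J$ by Lemma \ref{lem:norm_equiv}.

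The verifications left are routine: that the logarithm of a unipotent matrix is a well-defined real nilpotent matrix, that the various exponentials are real, and the elementary block computations behind the stated commutation relations. The only genuinely delicate point is the complex block, where the rotation part $\theta\tilde J_0$ is precisely what must be cancelled; it is essential that, after combining the commuting exponents, $J^{-k}B_J^k$ reduces to the orthogonal matrix $\exp(-k\theta\tilde J_0)$ rather than to a matrix that grows with $k$. This is also why the naive choice of $rI+N$ for a negative eigenvalue would fail: the nilpotent parts of $(-rI+N)^{-k}$ and $(rI+N)^k$ do not cancel and produce polynomial-in-$k$ growth, whereas leaving the nilpotent factor of $J$ untouched, as above, makes the cancellation work.
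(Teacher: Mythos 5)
Your argument is correct in structure and is essentially the same approach as the paper's: both proofs reduce to single Jordan blocks via Remark \ref{rem:equiv_JNF}, and both isolate a ``rotation'' factor that can be discarded because its powers are orthogonal, leaving a commuting factor with positive spectrum. The paper factors the real Jordan block $CAC^{-1}$ directly as a product $D_1D_2$ of commuting matrices, with $D_1$ a block diagonal of rotations $M_w$ and $D_2 = rI + (\text{superdiagonal } M_{\overline{w}z_i})$, and sets $B = C^{-1}D_2C$; your exponential writing $J = \exp\big((\log r)I + \theta\tilde J_0 + M\big)$ is the same decomposition recast through $\exp$/$\log$, with $\exp(\theta\tilde J_0)$ playing the role of $D_1$.

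There is one slip that needs correcting before the chain of identities closes: you set $M = \log\big(I + r^{-1}D^{-1}S\big)$, but then $D\exp(M) = D + r^{-1}S$, not $D + S = J$, so the claimed identity $J = D\exp(M)$ fails unless $r = 1$. The correct choice is $M = \log\big(I + D^{-1}S\big)$; with that, $D\exp(M) = D(I+D^{-1}S) = J$, the commutation checks you describe go through unchanged (since $M$ is a terminating polynomial in $D^{-1}S$, which commutes with $D$ and $\tilde J_0$), and $J^{-k}B_J^{k} = \exp(-k\theta\tilde J_0)$ is orthogonal. Incidentally, the corrected $B_J = r\exp(M) = rI + rD^{-1}S$ is then exactly the paper's $D_2$. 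The remaining parts (negative eigenvalue via $B_J = -J$, $\epsilon = 1$ for each block, reassembly via Remark \ref{rem:equiv_JNF}) are fine, and your closing remark about why the naive choice $rI+N$ fails for a negative eigenvalue is a useful sanity check.
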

\begin{proof}
First assume that $A$ has only one eigenvalue. If that eigenvalue is negative, then $B=-A$ is as desired. Now assume that the eigenvalue $\lambda$ is non-real, with $|\lambda|>1$. Given any complex number $z$, define the two-by-two matrix
\[
M_z = \left( \begin{array}{cc} {\rm Re}(z) & {\rm Im}(z) \\ - {\rm Im}(z) & {\rm Re}(z) \end{array} \right) \,\,.
\]
There exists a $C \in {\rm GL}(d,\mathbb{R})$ bringing $A$ into real Jordan normal form, i.e. such that 
\[
CAC^{-1} = \left( \begin{array}{ccccccc} M_\lambda & M_{z_1} &  &  &  & \ldots &  \\
  & M_\lambda & M_{z_2} &  &  &  & \\
  &  & \ddots & \ddots  & & & \\
  &  & & \ddots & \ddots & & \\ 
  &  & & & \ddots & \ddots & \\
 &  &             &            & & M_\lambda &  M_{z_{d/2-1}} \\ 
& & & & & & M_\lambda \end{array} \right) \,\,,
\] with $z_1,z_2 \ldots \in \{ 0 ,1 \} \subset \mathbb{C}$. 
Write $\lambda = r w$, with $r>0$ and $|w| = 1$. Then we can factor $CAC^{-1}$ as 
\[
CAC^{-1} = \left( \begin{array}{ccccccc} M_w &  &  & & & &  \\
  & M_w & & & & &  \\
  &  & \ddots & & & & \\
  &  & & \ddots & & & \\ 
  &  & & & \ddots & & \\
  &  &             &            & & M_w &  \\ 
& & & & & & M_w \end{array} \right) 
\left( \begin{array}{ccccccc} M_r & M_{\overline{w} z_1} &  &  &  & &  \\
  & M_r & M_{\overline{w} z_2} & &  &  &  \\
  &  & \ddots & \ddots  & & & \\
  &  & & \ddots & \ddots & & \\ 
 &  & & & \ddots & \ddots & \\
  &  &             &            & & M_r &  M_{\overline{w} z_{d/2-1}} \\ 
& & & & & & M_r \end{array} \right) \,\,
\] and the two factors commute. Call the factors on the right-hand side $D_1, D_2$, and let
$B = C^{-1} D_2 C$, then $B$ has $r = |\lambda|$ as only eigenvalue, in particular $|{\rm det}(A)| = {\rm det}(B)$ holds. Furthermore, the fact that $D_1$ and $D_2$ commute allows to compute, for all $k \in \mathbb{Z}$, 
\[
\| A^{-k} B^k \|  = \| C^{-1} D_1^{-k} C\| \le \| C \| \, \| C^{-1}\|\,\,
\] since $D_1$ is an orthogonal matrix. Hence Lemma \ref{lem:norm_equiv} yields the desired statement.

In the general case, we decompose $A$ into real Jordan blocks $A_1,\ldots,A_k$, and apply the above procedure to each of them. We then obtain a matrix $B$ with the desired properties via Remark \ref{rem:equiv_JNF}.
\end{proof}

One of the advantages of matrices with positive eigenvalues is that one always finds a one-parameter subgroup of ${\rm GL}(d,\mathbb{R})$ going through them. 
\begin{lemma} \label{lem:ex_one_par}
Let $A$ denote an expansive matrix with positive eigenvalues. Then there exists a matrix $X$ with 
$A = \exp(X)$. In particular, for any $c>1$ there exists an expansive matrix $B$ that is equivalent to $A$, only has positive eigenvalues, and fulfills ${\rm det}(B)=c$. 
\end{lemma}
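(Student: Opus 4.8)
The plan is to construct $X$ directly from the real Jordan normal form of $A$, using the fact that the (real) principal logarithm is available once all eigenvalues are positive reals. First I would pick $C \in {\rm GL}(d,\mathbb{R})$ bringing $A$ into real Jordan normal form; since all eigenvalues are positive, the Jordan blocks are honest real Jordan blocks $J = \lambda I_m + N$ with $\lambda > 1$ and $N$ the nilpotent shift. For each such block, one has $J = \lambda (I_m + \lambda^{-1} N)$, and the two commuting factors $\lambda I_m$ and $I_m + \lambda^{-1}N$ each have a logarithm: $\log(\lambda I_m) = (\ln \lambda) I_m$, and $\log(I_m + \lambda^{-1}N) = \sum_{k=1}^{m-1} \frac{(-1)^{k+1}}{k}(\lambda^{-1}N)^k$, a \emph{finite} sum because $N^m = 0$. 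Setting $X_{\mathrm{block}}$ equal to the sum of these two commuting logarithms gives $\exp(X_{\mathrm{block}}) = J$ by the addition formula for commuting matrices. Doing this blockwise produces a real matrix $Y$ with $\exp(Y) = CAC^{-1}$, and then $X = C^{-1} Y C$ satisfies $\exp(X) = A$, since conjugation commutes with $\exp$.

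For the second assertion, given $c > 1$, the idea is to rescale the one-parameter subgroup $t \mapsto \exp(tX)$ so that the determinant hits $c$. Using $\det(\exp(tX)) = \exp(t\,{\rm tr}(X))$ and noting ${\rm tr}(X) = \ln|\det(A)| > 0$ (as $A$ is expansive, so $|\det(A)|>1$), there is a unique $s > 0$ with $\exp(s\,{\rm tr}(X)) = c$, namely $s = \ln(c)/\ln|\det(A)|$. Define $B = \exp(sX)$. Then $\det(B) = c$, and $B$ is expansive: its eigenvalues are $\mu^{s}$ as $\mu$ ranges over the (positive) eigenvalues of $A$, each of which is $> 1$ since $\mu > 1$ and $s > 0$; in particular the eigenvalues of $B$ are again positive. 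Finally, $A = \exp(1 \cdot X)$ and $B = \exp(s \cdot X)$ both lie in the one-parameter subgroup generated by $X$, with $1, s > 0$, so the preceding lemma yields that $A$ and $B$ are equivalent.

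I do not expect a serious obstacle here; the only point requiring a little care is the verification that the real Jordan blocks of a matrix with positive eigenvalues really do admit a real logarithm via the above splitting — this is exactly where positivity of the eigenvalues is used (a negative or complex eigenvalue would obstruct choosing $\ln\lambda \in \mathbb{R}$), and it is why the reduction in Lemma~\ref{lem:ex_pos_spec} was needed as a preliminary step. One should also note that the block-diagonal structure of $Y$ matches that of $CAC^{-1}$, so no issue arises in assembling the blockwise logarithms into a single real matrix $Y$.
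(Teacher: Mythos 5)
Your proof is correct and takes essentially the same route as the paper: reduce to a single Jordan block via $\exp(CXC^{-1}) = C\exp(X)C^{-1}$, split $J = \lambda(I_m + \lambda^{-1}N)$ into commuting factors, take the real logarithm of the unipotent factor (you via the truncated power series, the paper by citing bijectivity of $\exp$ on nilpotent/unipotent matrices, which amounts to the same thing and is spelled out in a later remark), and then rescale the one-parameter subgroup $t\mapsto \exp(tX)$ using $\det(\exp(tX)) = e^{t\,\mathrm{tr}(X)}$ and the preceding lemma to obtain the equivalent matrix $B$ with $\det(B)=c$.
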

\begin{proof}
Since the exponential of a block diagonal matrix is again block diagonal, and since $\exp(CXC^{-1}) = C \exp(X) C^{-1}$, we may assume w.l.o.g. that $A$ is a single Jordan matrix, i.e.,
\[
A = \lambda I_d + T
\] where $T$ is a strictly upper triangular matrix. Instead of this additive decomposition, we may also write $A$ as the product
\[ 
A = (\lambda  I_d) \cdot C 
\] where $C = I_d + \lambda^{-1} T$ is a unipotent matrix. In particular, $C-I_d$ is {\em nilpotent}, which means that $(C-I_d)^d = 0$. 
 Letting $T(d,\mathbb{R})$ denote the set of all unipotent matrices and $\mathfrak{t}(d,\mathbb{R})$ for the space of all nilpotent matrices, \cite[Theorem 3.3.3]{HiNe} states that 
$
\exp: \mathfrak{t}(d,\mathbb{R}) \to T(d,\mathbb{R})$ is bijective. Hence there exists $Y \in \mathfrak{t}(d,\mathbb{R})$ with $\exp(Y) = C$.
Now the fact that $Y$ and $\ln (\lambda) I_d$ commute allows to conclude that
\[
\exp(\ln (\lambda) \cdot I_d + Y) = \exp(\ln (\lambda) I_d) \exp(Y) = A\,,
\] hence $X = \ln(\lambda) \cdot I_d + Y$ is as desired.

Returning to the general case of multiple Jordan blocks, if $A = \exp(X)$, then the eigenvalues of $\exp(tX)$ are just $\lambda^t$, with $\lambda$ an eigenvalue of $A$. In particular, $\exp(tX)$ is expansive, for all $t>0$. Furthermore, $\det(A) = \exp(tr(X))>1$. Hence $tr(X)>0$, and for $t = \frac{\ln(c)}{tr(X)}$ we get $\det(\exp(tX)) = c$. Thus $B = \exp(tX)$ is as desired. 
\end{proof}

We can now show the main result of this section. 
\begin{theorem} \label{thm:class_equiv_matr}
Let $A,B$ denote expansive matrices having only positive eigenvalues, and fulfilling ${\rm det}(A) = {\rm det}(B)$.
\begin{enumerate}
\item[(a)] $A$ and $B$ are equivalent if and only if $A=B$.
\item[(b)] Let  $\lambda_1 > \lambda_2 > \ldots > \lambda_k$ denote the distinct eigenvalues of $A$, and assume that $A$ has the form
\[
A =  \left( \begin{array}{cccc} J_1 & & & \\ & J_2 & & \\ & & \ddots & \\ & & & J_k \end{array} \right)\,\,,
\] such that,
\[
 \forall 1 \le i \le k~:~(J_i-\lambda_i I_{d_i})^d = 0~. 
\]
Then $A$ and $B$ are coarsely equivalent if and only if 
\[
B =  \left( \begin{array}{cccc} J_1 & \ast &  \ast & \ast  \\ & J_2 & \ast  & \ast \\ & & \ddots & \ast \\ & & & J_k \end{array} \right)\,\,,
\] i.e., $B$ has the same blocks on the diagonal, and arbitrary entries above these blocks. 
\end{enumerate}
\end{theorem}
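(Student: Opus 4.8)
The plan is to treat parts (a) and (b) separately, using throughout that the standing hypothesis ${\rm det}(A)={\rm det}(B)$ forces $\epsilon(A,B)=1$, so that the quasi-norm criteria of Lemmas~\ref{lem:norm_equiv} and~\ref{lem:char_cequiv_matr1} become, respectively, $\sup_{k\in\mathbb{Z}}\|A^{-k}B^{k}\|<\infty$ and $\sup_{k\in\mathbb{N}}\|A^{-k}B^{k}\|<\infty$. The recurring tool is the following elementary observation: if $M$ and $M'$ are matrices sharing a single eigenvalue $\mu>0$, write $M=\mu(I+P)$ and $M'=\mu(I+Q)$ with $P,Q$ nilpotent; then $(M')^{-k}M^{k}=(I+Q)^{-k}(I+P)^{k}$ has entries that are polynomials in $k$ (expand both factors via the terminating binomial series). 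Hence, if this expression is bounded for $k\in\mathbb{N}$, it is constant, equal to its value $I$ at $k=0$; evaluating at $k=1$ then gives $(I+Q)^{-1}(I+P)=I$, i.e. $M=M'$.

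For (a) the ``if'' direction is immediate. For the converse, assume $A$ and $B$ are equivalent. Since both have only positive eigenvalues, Lemma~\ref{lem:char_eigenspaces}(b) (with $\epsilon=1$) shows that $A$ and $B$ have the same eigenvalues and, for each eigenvalue $r$, the same generalized eigenspace $V_{r}={\rm Ker}(A-rI)^{d}={\rm Ker}(B-rI)^{d}$; in particular each $V_{r}$ is invariant under both matrices and $\mathbb{R}^{d}=\bigoplus_{r}V_{r}$. By Lemma~\ref{lem:norm_equiv}, $\sup_{k}\|A^{-k}B^{k}\|<\infty$, hence also $\sup_{k}\|(A|_{V_{r}})^{-k}(B|_{V_{r}})^{k}\|<\infty$. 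As $A|_{V_{r}}$ and $B|_{V_{r}}$ share the single eigenvalue $r>0$, the tool above gives $A|_{V_{r}}=B|_{V_{r}}$, and since the $V_{r}$ span $\mathbb{R}^{d}$ we conclude $A=B$.

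For (b) put $U_{i}=V_{1}\oplus\cdots\oplus V_{i}$, so that $A|_{V_{i}}=J_{i}$ and $A$ preserves each $U_{i}$. Assume first that $B$ has the stated form, and set $N=B-A$; by hypothesis $N$ is strictly upper block triangular. The identity $A^{-k}B^{k}=A^{-(k-1)}B^{k-1}+\bigl(A^{-k}NA^{k-1}\bigr)A^{-(k-1)}B^{k-1}$ telescopes to $A^{-k}B^{k}=\bigl(I+A^{-k}NA^{k-1}\bigr)\cdots\bigl(I+A^{-1}N\bigr)$. The $(p,q)$-block of $A^{-j}NA^{j-1}$ equals $J_{p}^{-j}N_{pq}J_{q}^{j-1}$ and vanishes unless $p<q$, in which case $\lambda_{p}>\lambda_{q}$; estimating $\|J_{p}^{-j}\|$ and $\|J_{q}^{j-1}\|$ via Lemma~\ref{lem:exp_norm} with eigenvalue bounds chosen close enough to $\lambda_{p}$ and $\lambda_{q}$, one obtains $\|A^{-j}NA^{j-1}\|\le C\theta^{j}$ for some $\theta<1$; hence $\sum_{j}\|A^{-j}NA^{j-1}\|<\infty$, the product converges, $\sup_{k}\|A^{-k}B^{k}\|<\infty$, and Lemma~\ref{lem:char_cequiv_matr1} yields coarse equivalence. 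Conversely, assume $A$ and $B$ are coarsely equivalent. Then Lemma~\ref{lem:char_eigenspaces}(a) (with $\epsilon=1$) gives $E(A^{-1},r,d)=E(B^{-1},r,d)$ for all $r>0$; comparing the jumps of these step functions of $r$ and evaluating at $r=\lambda_{i}^{-1}$ shows $E(B^{-1},\lambda_{i}^{-1},d)=U_{i}$, so each $U_{i}$ is $B$-invariant (i.e. $B$ is upper block triangular along $V_{1},\dots,V_{k}$) and, using that $B$ too has only positive eigenvalues, its $i$-th diagonal block $B_{i}$ has sole eigenvalue $\lambda_{i}$. Since $A$ and $B$ both preserve $U_{i-1}\subset U_{i}$, the operator $A^{-k}B^{k}$ descends to the quotient $U_{i}/U_{i-1}\cong V_{i}$ as $J_{i}^{-k}B_{i}^{k}$, whose norm inherits boundedness on $\mathbb{N}$ from $\sup_{k}\|A^{-k}B^{k}\|<\infty$; the tool above then forces $B_{i}=J_{i}$, which is precisely the asserted form of $B$.

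The step I expect to be most delicate is the converse in (b): producing not merely the block-triangular shape of $B$ (which comes out formally from the equality of generalized eigenspaces in Lemma~\ref{lem:char_eigenspaces}(a)) but the exact equality $B_{i}=J_{i}$ of the diagonal blocks, for which I descend to the subquotients $U_{i}/U_{i-1}$ and re-run the polynomial-boundedness argument. The other point needing care is the geometric decay estimate $\|A^{-j}NA^{j-1}\|\le C\theta^{j}$ in the ``$\Leftarrow$'' direction, which is exactly where the strict ordering $\lambda_{1}>\cdots>\lambda_{k}$ of the diagonal blocks is used; if $N$ also had entries below the block diagonal the corresponding blocks would grow and the product would diverge.
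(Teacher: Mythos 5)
Your proposal is correct, and it reaches the conclusion by essentially the same toolkit as the paper (the single-eigenvalue ``polynomial entries bounded $\Rightarrow$ constant'' observation, Lemmas~\ref{lem:char_eigenspaces}, \ref{lem:norm_equiv}, \ref{lem:char_cequiv_matr1}, and the norm growth estimates of Lemma~\ref{lem:exp_norm}), but the organization of part~(b) is genuinely different and slightly cleaner in two respects. First, for the ``only if'' direction, the paper runs an induction over the number $k$ of distinct eigenvalues, peeling off the largest eigenvalue's block and passing to the $(k-1)$-block remainder; you instead extract all the diagonal blocks at once via the step-function argument for $r\mapsto E(A^{-1},r,d)=E(B^{-1},r,d)$, and then reduce each diagonal block to the single-eigenvalue case by passing to the subquotients $U_i/U_{i-1}$. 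This is the same content, reorganized to avoid the explicit induction, and it requires the small (correct, and worth spelling out) observation that $A^{-k}B^k$ preserves the flag $U_1\subset\cdots\subset U_k$ and that the induced operator on $U_i/U_{i-1}$ is $J_i^{-k}B_i^k$ with quotient-norm dominated by the original. Second, and more notably, for the ``if'' direction the paper again inducts, splitting $A^{-k}B^k$ into a $2\times 2$ block form, bounding the off-diagonal block $A_1^{-k}C_k$ by a geometric sum and the bottom-right block by the induction hypothesis; your telescoping identity $A^{-k}B^k=\prod_{j=1}^k(I+A^{-j}NA^{j-1})$ with $N=B-A$, together with the geometric decay $\|A^{-j}NA^{j-1}\|\le C\theta^j$ for $\theta<1$ coming from the strict ordering $\lambda_1>\cdots>\lambda_k$ and Lemma~\ref{lem:exp_norm}, gives the uniform bound in one pass, with no induction. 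The telescoping-product estimate is arguably the cleaner of the two arguments. Everything else (part~(a), the single-eigenvalue ``tool'', the use of $\epsilon(A,B)=1$) matches the paper's proof.
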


\begin{proof}
We first consider the case where $A$ and $B$ have only one eigenvalue $\lambda>0$, and $A$ and $B$ are coarsely equivalent. We want to show that $A=B$. 
By assumption on the spectra, we can write 
\[
 B = \lambda (I_d + N_B)
\] with $N_B^d =  0$. It follows that, for all $k$,
\begin{eqnarray*}
B^k &  = & \lambda^k \sum_{\ell=0}^k \left( \begin{array}{c} k \\ \ell \end{array} \right) N_B^\ell \\
& = & \lambda^k \sum_{\ell=0}^d \left( \begin{array}{c} k \\ \ell \end{array} \right) N_B^\ell \\
& = & \lambda^k P_k
\end{eqnarray*}
where $P_k$ is a matrix whose entries depend polynomially on $k$.
Similarly, we have
\[
A^{-1} = \lambda (I_d + N_A)
\] and thus
\[
A^{-k}  = \lambda^{-k} Q_k\,\,,
\] where the entries of $Q_k$ are polynomials in $k$. 
Assuming that $\rho_A$ and $\rho_B$ are coarsely equivalent, we have
\[
\infty > \sup_{k \in \mathbb{N}} \| A^{-k} B^k \| = \sup_{k \in \mathbb{N}} \| Q_k P_k \| \,\,.
\] The entries of $Q_k P_k$ are polynomials in $k$, and bounded. It follows that the map $k \mapsto Q_k P_k$ is {\em constant}. In particular, $A^{-2} B^2  = A^{-1} B^1$, which implies $A=B$. 

For part (a), assume that $A$ and $B$ are equivalent. W.l.o.g. $A$ is in Jordan normal form,
\[  A  = \left( \begin{array}{cccc} J_1 & & &    \\  & J_2 & & \\ & & \ddots & \\ & & & J_k \end{array} \right) \,\, 
\]
The $i$th block of $A$ corresponds to the generalized eigenspace ${\rm Ker}(A-\lambda_i I_d)^d$
associated to the $i$th eigenvalue $\lambda_i$, and since the spectra of $A,B$ are both positive, we have
by Lemma \ref{lem:char_eigenspaces}
\[
{\rm Ker}(A-\lambda_i I_d)^d = {\rm Ker}(B-\lambda_i I_d)^d
\]
But this means that $B$ has the same block diagonal structure, 
\[
 B  = \left( \begin{array}{cccc} B_1 & & &    \\  & B_2 & & \\ & & \ddots & \\ & & & B_k \end{array} \right)\,\,
\] and $(B_i - \lambda_i I_{d_i})^d = 0$. In particular, $\epsilon(J_i,B_i) = 1 = \epsilon(A,B)$, for all $i=1,\ldots,k$. Now Remark \ref{rem:equiv_JNF} yields that all pairs $J_i,B_i$ must be equivalent, and the single eigenvalue case we considered first then entails $A=B$. 
 
For part (b), we proceed by induction over the number $k$ of distinct eigenvalues, noting that the case $k=1$ has already been taken care of in the beginning of this proof. Hence it remains to prove the induction step, and we assume $k \ge 2$. 
Define 
\[
E_1 = {\rm Ker}(A-\lambda_1 I_d)^d \,\,,
\] the generalized eigenspace associated to the eigenvalue $\lambda_1$. For any matrix $C \in \mathbb{R}^{d \times d}$, one has
\[
{\rm Ker}(C^{-1}-\lambda^{-1} I_d)^d = {\rm Ker}(C-\lambda I_d)^d
\]
and hence we obtain from the assumption $\lambda_1^{-1} < \lambda_2^{-1} < \ldots$ that 
\[
E_1 = E(A^{-1}, \lambda_1^{-1},d)\,\,.
\] Now assume that $A$ and $B$ are coarsely equivalent. By choice of $\lambda_1$, and using that the spectra of both $A$ and $B$ are real, we obtain from Lemma \ref{lem:char_eigenspaces}(a) that 
\[
E_1 = E(B^{-1},\lambda_1^{-1},d) = {\rm Ker}(B-\lambda_1 I_d)^d\,\,.
\]
Thus we have derived that
\begin{equation} \label{eqn:block_AB}
A = \left( \begin{array}{cc} A_1 & 0 \\ 0 & A_2 \end{array} \right)\,\,  , \,\, B = \left( \begin{array}{cc} B_1 & C \\ 0 & B_2 \end{array} \right)
\end{equation} where $A_1=J_1$, $A_2$ is the matrix containing the remaining blocks of $A$, $B_1$ is a matrix with the single eigenvalue $\lambda_1$, and $B_2, C$ are not further specified at this point. 
Furthermore, we have $\det(A_1) = \det(B_1)$, and thus also $\det(A_2) = \det(A)/\det(A_1) = \det(B)/\det(B_1) = \det(B_2)$.

Now, by induction over $k \in \mathbb{N}$, we find that 
\begin{equation} \label{eqn:powers_B}
B^k = \left( \begin{array}{cc} B_1^k & C_k \\ 0 & B_2^k \end{array} \right) \,\,, C_k = \sum_{\ell=0}^{k-1} B_1^{\ell} C B_2^{k-1-\ell}\,\, ,
\end{equation}
which leads to
\begin{equation}
A^{-k} B^k = \left(  \begin{array}{cc} A_1^{-k} B_1^k & A_1^{-k} C_k \\ 0 & A_2^{-k} B_2^k \end{array} \right) \,.
\end{equation}
The assumption that $A$ and $B$ are coarsely equivalent implies via Lemma \ref{lem:char_cequiv_matr1} that this sequence of matrices is norm-bounded. This in turn implies that $A_i,B_i$ are coarsely equivalent, in view of the observation that $\epsilon(A_i,B_i) = 1$, for $i=1,2$. But then the case considered in the beginning of this proof  implies $A_1=B_1$. Furthermore, the induction hypothesis becomes available for $B_2$,  implying that the blocks on the diagonal of $B_2$ must coincide with those of $A_2$. 

This proves the ``only-if'' part of (b). 
For the converse direction, we assume that $A$ and $B$ have the structure assumed in part (b) of the theorem, and write $A,B$ as in (\ref{eqn:block_AB}). Note that here we assume $A_1 = B_1$, so we get that
\begin{equation}
A^{-k} B^k = \left(  \begin{array}{cc} I_{d_1} & A_1^{-k} C_k \\ 0 & A_2^{-k} B_2^k \end{array} \right) \,.
\end{equation}
and the induction hypothesis yields that the family $(A_2^{-k} B_2^k)_{k \in \mathbb{N}}$ is norm-bounded. 
Pick $\lambda_+$ strictly between $\lambda_1$ and $\lambda_2$, and let $r = \lambda_+/ \lambda_1 < 1$. We can then employ Lemma \ref{lem:exp_norm} and obtain the estimate
\begin{eqnarray*}
\| A_1^{-k} C_k \| & = & \left\| \sum_{\ell=0}^{k-1} A_1^{\ell-k} C B_2^{k-\ell-1} \right\| \\
& \le & \sum_{\ell=0}^{k-1} \| A_1^{\ell-k+1} \| \| A_1^{-1} C \| \| B_2^{k-\ell-1} \| \\
& \le & C \sum_{\ell=0}^{k-1} \lambda_1^{\ell-k+1} \lambda_+^{k-\ell-1}    \| A_1^{-1} C \| \\
& = & C' \sum_{\ell=0}^{k-1} r^\ell  \le C' \sum_{\ell=0}^\infty r^\ell < \infty\,\,. 
\end{eqnarray*}
Now Lemma \ref{lem:char_cequiv_matr1} yields that $A$ and $B$ are coarsely equivalent. 
\end{proof}

\begin{remark}
 With the criteria from the theorem, we can now see easily (by applying the Theorem to $A^T,B^T$), that the matrices 
 \[
  A = \left( \begin{array}{cc} 3 & 0  \\ 0 & 2 \end{array} \right) ~,~  B = \left( \begin{array}{cc} 3 & 0  \\ 1 & 2 \end{array} \right) 
 \] fulfill $A \sim_B B$ and $A \not\sim_{\dot{B}} B$, yielding an example that the converse of the final statement of Corollary \ref{cor:char_equiv} is false. Furthermore, we have $A^T \not\sim_{B} B^T$ even though $A \sim_B B$, in contrast to the case of homogeneous Besov spaces, see Remark \ref{rem:transpose}. 
 
 Note however that if $A$ and $B$ are both diagomal with positive entries, then $A \sim_{\dot{B}} B$ is equivalent $A \sim_B B$, and both statements are equivalent to $A = B^\epsilon$, for some positive number $\epsilon$.
\end{remark}

\begin{remark}
We call a matrix $A'$ {\bf in expansive normal form} if it is expansive, with positive eigenvalues and ${\rm det}(A') =2$. Then Lemmas \ref{lem:ex_pos_spec} and \ref{lem:ex_one_par} shows that for each expansive matrix $A$ there exists a matrix $A'$ that is in expansive normal form, and equivalent to $A$. Furthermore, Theorem \ref{thm:class_equiv_matr} ensures that $A'$ is {\bf unique}, a fact which justifies calling $A'$ {\bf the expansive normal form of $A$}. 

These observations entail that two expansive matrices $A$ and $B$ are equivalent if and only if their expansive normal forms coincide. One can break down the computation of normal forms as follows:
 \begin{itemize}
  \item Compute $A_1$ such that $|\det(A)| = \det(A_1)$, $A_1$ has only positive eigenvalue, and $A_1$ is equivalent to $A$; see Lemma \ref{lem:ex_pos_spec}.
  \item Compute $X = \log(A_1)$; see proof of Lemma  \ref{lem:ex_one_par}.
  \item Compute $A' = \exp(t X)$, for $t = \frac{\ln(2)}{\ln (|{\rm det}(A)|)}$.
 \end{itemize}
Note that in principle the different steps can be carried out using finitely many operations, if taking exponentials and logarithms of scalars are admissible, and the eigenvalues of $A$ are known: 
Determining the real Jordan normal form of $A$ amounts to computing a matrix $C$ such that $A=CJC^{-1}$, with $J$ a matrix in real Jordan normal form. Given the eigenvalues of $A$, the matrix $C$ is found by solving systems of linear equations. Then $A_1 = C M J C^{-1}$, with an easily computable (block) diagonal matrix $M$ ensuring that the product $M J$ has only positive eigenvalues (see proof of Lemma \ref{lem:ex_pos_spec}). $\log(A_1)$ is then obtained as $C\log(M J)C^{-1}$, which can again be carried out for each diagonal block separately, and quite efficiently: By \cite[Theorem 3.3.3]{HiNe}, the inverse of $\exp$ on the group of unipotent matrices is computed by the logarithm power series
 \[
  \log(I_d + Y) = \sum_{n=1}^\infty \frac{(-1)^{n+1}}{n} Y^n~,
 \] which breaks off after finitely many terms since $Y$ is nilpotent. Hence $X$ is computable from $A_1$ in finitely many steps.
 Finally, $A' = C \exp(t M J) C^{-1}$, which amounts to exponentiating each diagonal block. The latter can be done efficiently by exponentiating the eigenvalues and nilpotent parts of each block separately and then taking the products. Since  the exponential series of a nilpotent matrix again breaks off after finitely many terms, this step also only requires finitely many operations.
 
 Thus the decision whether $A$ and $B$ are equivalent is decidable in finitely many steps, and the same is true for coarse equivalence. Note that the matrix $\exp(t M J)$ arising in the computation of $A'$ has the block structure required  in part (b) of Theorem \ref{thm:class_equiv_matr}. Hence it remains to compute $B'$, and check whether $C^{-1} B' C - \exp(t \log(M J))$ vanishes on and below the block diagonal of $\exp(t \log(M J))$, where $C$ was the matrix effecting the real Jordan normal form of $A$. 
 
 Finally, recall that for the decision whether the Besov spaces associated to $A$ and $B$ resp. coincide, one needs to apply the above procedure to $A^T,B^T$, and that this distinction only matters for the inhomogeneous spaces. 
 \end{remark}

%
%

\bibliography{aib.bib}

\def\cprime{$'$}
\begin{thebibliography}{10}

\bibitem{BaMe}
Lawrence~W. Baggett and Kathy~D. Merrill.
\newblock Abstract harmonic analysis and wavelets in {$\bold R^n$}.
\newblock In {\em The functional and harmonic analysis of wavelets and frames
  ({S}an {A}ntonio, {TX}, 1999)}, volume 247 of {\em Contemp. Math.}, pages
  17--27. Amer. Math. Soc., Providence, RI, 1999.

\bibitem{BaBe1}
B.~Barrios and J.~J. Betancor.
\newblock Characterizations of anisotropic {B}esov spaces.
\newblock {\em Math. Nachr.}, 284(14-15):1796--1819, 2011.

\bibitem{BaBe2}
B.~Barrios and Jorge~J. Betancor.
\newblock General characterizations of anisotropic {B}esov spaces.
\newblock {\em Publ. Math. Debrecen}, 80(1-2):179--198, 2012.

\bibitem{Besov_et_al}
Oleg~V. Besov, Valentin~P. Il{\cprime}in, and Sergey~M.
  Nikol{\cprime}ski{\u\i}.
\newblock {\em Integral representations of functions and imbedding theorems.
  {V}ol. {II}}.
\newblock V. H. Winston \& Sons, Washington, D.C.; Halsted Press [John Wiley
  \&\ Sons], New York-Toronto, Ont.-London, 1979.
\newblock Scripta Series in Mathematics, Edited by Mitchell H. Taibleson.

\bibitem{BorupNielsenAlphaModulationSpaces}
Lasse Borup and Morten Nielsen.
\newblock Banach frames for multivariate $\alpha$-modulation spaces.
\newblock {\em Journal of mathematical analysis and applications},
  321(2):880--895, 2006.

\bibitem{BorupNielsenDecomposition}
Lasse Borup and Morten Nielsen.
\newblock {F}rame {D}ecomposition of {D}ecomposition {S}paces.
\newblock {\em Journal of Fourier Analysis and Applications}, 13(1):39--70,
  2007.

\bibitem{Bow03}
Marcin Bownik.
\newblock Anisotropic {H}ardy spaces and wavelets.
\newblock {\em Mem. Amer. Math. Soc.}, 164(781):vi+122, 2003.

\bibitem{Bow05}
Marcin Bownik.
\newblock Atomic and molecular decompositions of anisotropic {B}esov spaces.
\newblock {\em Math. Z.}, 250(3):539--571, 2005.

\bibitem{BoSp}
Marcin Bownik and Darrin Speegle.
\newblock Meyer type wavelet bases in {${\Bbb R}^2$}.
\newblock {\em J. Approx. Theory}, 116(1):49--75, 2002.

\bibitem{CaMoRo}
Carlos Cabrelli, Ursula Molter, and Jos{\'e}~Luis Romero.
\newblock Non-uniform painless decompositions for anisotropic {B}esov and
  {T}riebel-{L}izorkin spaces.
\newblock {\em Adv. Math.}, 232:98--120, 2013.

\bibitem{Dintelmann_95}
Peter Dintelmann.
\newblock Classes of {F}ourier multipliers and {B}esov-{N}ikolskij spaces.
\newblock {\em Math. Nachr.}, 173:115--130, 1995.

\bibitem{Farkas}
Walter Farkas.
\newblock Atomic and subatomic decompositions in anisotropic function spaces.
\newblock {\em Math. Nachr.}, 209:83--113, 2000.

\bibitem{DecompositionSpaces2}
Hans~G. Feichtinger.
\newblock {B}anach {S}paces of {D}istributions {D}efined by {D}ecomposition
  {M}ethods {I}{I}.
\newblock {\em Mathematische Nachrichten}, 132(1):207--237, 1987.

\bibitem{DecompositionSpaces1}
Hans~G. Feichtinger and Peter Gr{\"o}bner.
\newblock {B}anach {S}paces of {D}istributions {D}efined by {D}ecomposition
  {M}ethods, {I}.
\newblock {\em Mathematische Nachrichten}, 123(1):97--120, 1985.

\bibitem{Fu_coorbit}
Hartmut F{\"u}hr.
\newblock Coorbit spaces and wavelet coefficient decay over general dilation
  groups.
\newblock {\em Trans. Amer. Math. Soc.}, 367(10):7373--7401, 2015.

\bibitem{FuVo}
Hartmut F{\"u}hr and Felix Voigtlaender.
\newblock Wavelet coorbit spaces viewed as decomposition spaces.
\newblock {\em J. Funct. Anal.}, 269(1):80--154, 2015.

\bibitem{Garrigos}
Gustavo Garrig{\'o}s, Reinhard Hochmuth, and Anita Tabacco.
\newblock Wavelet characterizations for anisotropic {B}esov spaces with
  {$0<p<1$}.
\newblock {\em Proc. Edinb. Math. Soc. (2)}, 47(3):573--595, 2004.

\bibitem{GroMa}
K.~Gr{\"o}chenig and W.~R. Madych.
\newblock Multiresolution analysis, {H}aar bases, and self-similar tilings of
  {${\bf R}^n$}.
\newblock {\em IEEE Trans. Inform. Theory}, 38(2, part 2):556--568, 1992.

\bibitem{HiNe}
Joachim Hilgert and Karl-Hermann Neeb.
\newblock {\em Structure and geometry of {L}ie groups}.
\newblock Springer Monographs in Mathematics. Springer, New York, 2012.

\bibitem{Hochmuth}
Reinhard Hochmuth.
\newblock {$n$}-term approximation in anisotropic function spaces.
\newblock {\em Math. Nachr.}, 244:131--149, 2002.

\bibitem{kyr}
George Kyriazis.
\newblock Multilevel characterizations of anisotropic function spaces.
\newblock {\em SIAM J. Math. Anal.}, 36(2):441--462 (electronic), 2004.

\bibitem{LaWa}
Jeffrey~C. Lagarias and Yang Wang.
\newblock Integral self-affine tiles in {$\bold R^n$}. {I}. {S}tandard and
  nonstandard digit sets.
\newblock {\em J. London Math. Soc. (2)}, 54(1):161--179, 1996.

\bibitem{LiBaBoYaYu}
Baode Li, Marcin Bownik, Dachun Yang, and Wen Yuan.
\newblock A mean characterization of weighted anisotropic {B}esov and
  {T}riebel-{L}izorkin spaces.
\newblock {\em Z. Anal. Anwend.}, 33(2):125--147, 2014.

\bibitem{Roe}
John Roe.
\newblock {\em Lectures on coarse geometry}, volume~31 of {\em University
  Lecture Series}.
\newblock American Mathematical Society, Providence, RI, 2003.

\bibitem{Ru_FA}
Walter Rudin.
\newblock {\em Functional analysis}.
\newblock International Series in Pure and Applied Mathematics. McGraw-Hill,
  Inc., New York, second edition, 1991.

\bibitem{Schmeisser_Triebel}
Hans-J{\"u}rgen Schmeisser and Hans Triebel.
\newblock {\em Topics in {F}ourier analysis and function spaces}.
\newblock A Wiley-Interscience Publication. John Wiley \& Sons, Ltd.,
  Chichester, 1987.

\bibitem{Str}
Robert~S. Strichartz.
\newblock Wavelets and self-affine tilings.
\newblock {\em Constr. Approx.}, 9(2-3):327--346, 1993.

\bibitem{Triebel_FSI}
Hans Triebel.
\newblock {\em Theory of function spaces}, volume~78 of {\em Monographs in
  Mathematics}.
\newblock Birkh\"auser Verlag, Basel, 1983.

\bibitem{Triebel_FSII}
Hans Triebel.
\newblock {\em Theory of function spaces. {II}}, volume~84 of {\em Monographs
  in Mathematics}.
\newblock Birkh\"auser Verlag, Basel, 1992.

\bibitem{Triebel_04}
Hans Triebel.
\newblock A note on wavelet bases in function spaces.
\newblock In {\em Orlicz centenary volume}, volume~64 of {\em Banach Center
  Publ.}, pages 193--206. Polish Acad. Sci., Warsaw, 2004.

\bibitem{VoDiss}
Felix Voigtlaender.
\newblock {\em Embedding {T}heorems for {D}ecomposition {S}paces, with
  {A}pplications to {W}avelet {C}oorbit {S}paces}.
\newblock PhD thesis, RWTH Aachen, 2015.

\bibitem{Vo_Embed1}
Felix Voigtlaender.
\newblock Embeddings of decomposition spaces.
\newblock Preprint, available under http://arxiv.org/abs/1605.09705, 2016.

\end{thebibliography}
\bibliographystyle{plain}
\end{document}